\def\Z{\mathbb Z}
\def\R{\mathbb R}
\def\N{\mathbb N}
\def\B{\mathcal B}
\def\A{\mathcal A}
\def\L{\mathcal L}
\def\pf{\begin{proof}}
\def\pfk{\end{proof}}
\newcommand{\vpb}{\varphi_\beta}
\newcommand{\ub}{\mathbf{u}}
\newcommand{\ubeta}{\ub_\beta}
\newcommand{\Bub}{\B(\ub)}
\newcommand{\Bubeta}{\B(\ubeta)}
\newcommand{\Le}[1]{\mathrm{Lext}(#1)}
\newcommand{\Rex}[1]{\mathrm{Rext}(#1)}
\newcommand{\E}{\mathrm{E}}
\newcommand{\ind}[1]{\text{ind}(#1)}
\newcommand{\e}{\epsilon}
\newcommand{\lcp}[2]{\mathrm{lcp}(#1,#2)}
\newtheorem{pro}{Proposition}
\newtheorem{lem}[pro]{Lemma}
\newtheorem{thm}[pro]{Theorem}
\newtheorem{dfn}[pro]{Definition}
\newtheorem{cor}[pro]{Corollary}
\newtheorem{exa}[pro]{Example}
\title{Critical exponent of infinite words coding beta-integers associated with non-simple Parry numbers}
\author{L{\!'}. Balkov\'a,  K. Klouda, E. Pelantov\'a
\footnote{e-mail: l.balkova@centrum.cz, karel.klouda@fjfi.cvut.cz, edita.pelantova@fjfi.cvut.cz}\\
\emph{Department of Mathematics, FNSPE,
Czech Technical University},\\ \emph{
Trojanova 13, 120 00 Praha 2, Czech Republic}\\}
\begin{document}
\maketitle
\begin{abstract}
In this paper, we study the critical exponent of infinite words $\ubeta$ coding $\beta$-integers for $\beta$ being a~non-simple Parry number.
In other words, we investigate the maximal consecutive repetitions of factors that occur in the infinite word in question.
We calculate also the ultimate critical exponent that
expresses how long repetitions occur in the infinite word $\ubeta$
when the factors of length growing ad infinitum are considered.
The basic ingredients of our method are the
description of all bispecial factors of $\ubeta$ and the notion of
return words. This method can be applied to any fixed point of any
primitive substitution.
\end{abstract}
\section{Introduction}

In this paper the infinite words associated with non-simple Parry
numbers $\beta$ are studied. These words, denoted by $\ubeta$,
have two equivalent definitions, they are the words coding the
gaps between consecutive $\beta$-integers and, at the same time,
they are fixed points of the substitutions $\vpb$ canonically
assigned to $\beta$. Our aim is to find the maximal repetitions of
motifs occurring in $\ubeta$, more precisely, to compute the
critical exponent and the ultimate critical exponent of these words (for definition see
\eqref{criticalexponent} and \eqref{ultimatecriticalexponent}).

The $\beta$-integers proved to be a convenient discrete set for
description of positions of atoms in the materials with long range
order, so-called quasicrystals~\cite{Burdik1998}. Physical
properties of these materials are determined by the spectrum of
the discrete Schr\"{o}dinger operator assigned to this aperiodic structure.
Damanik shown in~\cite{Damanik2000} that there is a strong
connection between the properties of the spectrum and the value of
the critical exponent of the word $\ubeta$.

In 1912, A. Thue studied words with minimal repetitions; he
discovered a word with the critical exponent equal to two -- the
lowest possible critical exponent for binary words -- which is now
known as Thue-Morse word~\cite{Thue1912}. A great effort was made
to compute the critical exponent of Sturmian words. For the most
prominent Sturmian word, namely the Fibonacci word, the critical
exponent was calculated by Mignosi and Pirillo in 1992
in~\cite{Mignosi1992}. The general result for all Sturmian words
was provided independently by Carpi and de Luca~\cite{Carpi2000}
and by Damanik and Lenz~\cite{Damanik2002}; the formula comprises
the coefficients of the continued fraction of the slope of a given
Sturmian word.

The major contribution to the problem of computing the critical
exponent of fixed points of substitution is due to D.
Krieger~\cite{Krieger2007}. She proved that the critical exponent
of such words is either infinite or belongs to the algebraic field
generated by the eigenvalues of the incidence matrix of the
respective substitution.

In the present paper we provide the formula for computing the
critical exponent of the words $\ubeta$ associated to non-simple
Parry numbers. The basic ingredients of our method are the
description of all bispecial factors of $\ubeta$ and the notion of
return words. This method can be applied to any fixed point of any
primitive substitution.

The paper is organized as follows.  In Section \ref{prelim} we
recall basic notions of combinatorics on words and we introduce a
connection of the studied words $\ubeta$ with some numeration
systems. Section \ref{MaxAndBispecial} shows that description of
bispecial factors and return words is crucial  for evaluation of
the critical exponent of any infinite word. Therefore, Section
\ref{bispecialUNAS} is focused on these objects in the word
$\ubeta$. In Section \ref{main}
 the main theorem is stated. Its proof is contained
in Section \ref{proof}. The last section is devoted to derivation
of a simple form of the ultimate critical exponent.

\section{Preliminaries}\label{prelim}

\subsection{Combinatorics on words}

A {\em finite word} $w$ over a finite alphabet $\A = \{a_1, a_2, \ldots,
a_{m}\}$ is a string of letters from $\A$, i.e., $w= w_1w_2\ldots
w_n$, where $w_i \in \A$ for all $i=1,2, \ldots ,n$. The {\em length} of
the word $w=w_1w_2\ldots w_n$ will be denoted $|w| = n$, by
$|w|_a$ we denote the number of occurrences of the letter $a$ in
$w$. The \emph{Parikh vector} of a finite word $w$ is the row
vector $\Psi(w)= (|w|_{a_1}, |w|_{a_2},\ldots, |w|_{a_m}) \in
\N^m$. Clearly, $|w| = \Psi(w)\vec{e}$, where $\vec{e}$ is the
column vector from $\R^m$ whose all entries are equal to 1.

For the set of finite words over the alphabet $\A$, the notation
$\A^*$ is used. An {\em infinite word} $\ub$ over the alphabet $\A$ is a
sequence $\ub = (u_n)_{n\in \mathbb{N}}$ with $u_n \in \A$ for all
$n \in \mathbb{N}$. The set of such sequences is denoted
$\A^\mathbb{N}$. The set $\A^*$ together with concatenation forms
a monoid, with the empty word $\e$ as its neutral element. The
notation $w^k$ for $w \in \A^*$ and $k \in \N$ stands for
concatenation of $k$ copies of the word $w$; the symbol $w^\omega$
means the infinite repetition of $w$.

If a word $w$ arises by concatenation of $x$  and $y$, i.e., $w =
xy$, then $x$ is called a \emph{prefix} of $w$
 and $y$ is a \emph{suffix} of $w$.  The prefix $x$ can be obtained from
 $w$ by the ``inverse" procedure to concatenation, namely by erasing
 the  suffix $y$,  therefore we will also use $x = w
 y^{-1}$ and, analogously, $y =
 x^{-1}w$.  The \emph{cyclic shift} on $\A^*$
 is the mapping $w\to S(w) = awa^{-1}$,
  where $a$ is the last letter of $w$. Any iteration $S^k(w)$  of the cyclic
  shift  for $k\in \mathbb{N}$ is called a \emph{conjugate} of $w$.
We say that a word $w \in \A^*$ is {\em primitive} if it  has $|w|$
conjugates.

A word $w \in \A^*$ is said to be a \emph{factor} of an infinite
word $\ub = (u_n)_{n\in \mathbb{N}}$ if there exists an index $i
\in  \N$ such that $w$ is a prefix of $u_iu_{i+1} \cdots$. The
index $i$ is an \emph{occurrence} of $w$ in  $\ub$. The set of all
factors of $\ub$ is denoted $\L(\ub)$.

An infinite word $\ub$ is \emph{recurrent}, if any of its factors
has at least two occurrences in $\ub$. If, moreover, the gaps
between neighboring occurrences of a given factor are bounded for
any factor, $\ub$ is \emph{uniformly recurrent}.

A word $v$ is called a \emph{power} of $w$ if $v$ is a prefix of
$w^\omega$. If $v$ is not a power of any word $w'$ shorter than
$w$, then  $w$ is the \emph{root} of $v$. The \emph{index} of a
finite word $w \neq \epsilon$ in an infinite word $\ub$ is
$$
\ind{w} = \sup\left\{ \tfrac{|v|}{|w|}  {\mid}  v\in \L(\ub) \ \
\text{and} \  v \text{ is a power of } w\right\}\,.
$$
Let us limit our considerations to uniformly recurrent infinite words.
Under this assumption, any factor $w \in
 \L(\ub)$ has a finite index.
 A power $v$ of $w$
 for which the supremum is attained is called the \emph{maximal power} of $w$
 in $\ub$. The \emph{critical exponent} of an infinite word  $\ub$ is
 defined as
\begin{equation}\label{criticalexponent}
\E( \ub) = \sup \{ \ind{w} \mid  w  \in
 \L(\ub)\}\,.
\end{equation}
In~\cite{Berthe2006}, the authors introduce $\E^{*}(\ub)$ which is
closely related to $\E(\ub)$. The characteristics $\E^{*}(\ub)$
expresses how long repetitions occur in the infinite word $\ub$
when the factors of length growing ad infinitum are considered. In
order to provide an exact definition of $\E^*(\ub)$, let us denote
by $\text{ind}_n(\ub)=\max\{\ind{w} \bigm | w \in {\mathcal
L}(\ub), \ |w|=n\}$. The \emph{ultimate critical exponent} of an
infinite word  $\ub$ is defined as
\begin{equation}\label{ultimatecriticalexponent}
\E^*( \ub) = \limsup_{n \to \infty} \text{ind}_n(\ub).
\end{equation}
Clearly, $\E(\ub)\geq \E^{*}(\ub)$. In case $\E(\ub) \not \in \mathbb Q$, then $\E^{*}(\ub)=\E(\ub)$.

Let us recall that a \emph{morphism} on  $\A^*$ is a mapping
$\varphi: \A^* \to \A^*$ such that
$\varphi(wv)=\varphi(w)\varphi(v)$ for all $w,v \in \A^*$\,. To
any morphism  $\varphi$, one can assign its \emph{incidence matrix}
$M_{\varphi}$ by the prescription $(M_\varphi)_{a,b} =
|\varphi(a)|_b$. The incidence matrix enables to express the
Parikh vector of the image of $w$ by $\varphi$. One has
\begin{equation}\label{parikh}
\Psi(\varphi(w)) =  \Psi(w) M_\varphi\,.
\end{equation}
We say that a substitution morphism  $\varphi$ is \emph{primitive}
if there exists an exponent  $k\in \mathbb{N}$ such that all
entries of $M_\varphi^k$ are positive.

 The image of an infinite word
$\ub$ by $\varphi$ is naturally defined  as $\varphi (\ub) =
\varphi (u_0u_1u_2\ldots ) = \varphi(u_0)\varphi(u_1) \varphi(u_0)
\ldots $. The word $\ub\in \A^\N$ is a \emph{fixed point} of a
morphism $\varphi$ if  $\varphi (\ub) = \ub$. If $\varphi(b)\neq
\epsilon$ for every letter $b \in \A$ and if there exists a letter
$a\in \A$ and $w \in \A^* -\{\epsilon\}$ such that $\varphi(a) =
aw$, then $\varphi$ is called a \emph{substitution}. Any
substitution has at least one fixed point, namely
$\lim\limits_{n\to \infty} \varphi^n(a)$ (taken in the product
topology). A substitution $\varphi$ in general may have more fixed
points. If $\varphi$ is primitive, then  any of its fixed points
is uniformly recurrent and the languages of all fixed points of
$\varphi$ coincide.

Variability in an infinite word $\ub$ is measured by the
\emph{factor complexity}  $\mathcal{C}: \N \to \N$ defined for
every $n \in \N$ by
$$
\mathcal{C}(n) = \#\{ w \mid w \in \L(\ub)\ \text{and} \ |w| =
n\}\,.
$$
It is known \cite{Queffelec1987} that the factor complexity of a
fixed point of a primitive substitution is sublinear, i.e., there
exist constants $a,b \in \mathbb{R}$ such that $
\mathcal{C}(n)\leq a n+b$ for all $n \in \N$.

For evaluation of the complexity of an infinite word $\ub$, the
special factors play an important role. Let us denote by ${\rm
Rext}(w) = \{ a\in \A \mid wa \in \mathcal{L}(\ub) \}$  and ${\rm
Lext}(w) = \{ a\in \A \mid aw \in \mathcal{L}(\ub) \}$ the set of
all possible right and left extensions of a factor $w$,
respectively. Clearly $\#{\rm Rext}(w) \geq 1$ for any factor $w$.
If  $\ub$ is recurrent, then also $\#{\rm Lext}(w) \geq 1$. A
factor $w \in \L(\ub)$ is said to be {\em right special} (RS) if $\#{\rm
Rext}(w) \geq 2$ and {\em left special} (LS) if $\#{\rm Lext}(w) \geq
2$. We say that a factor $w$ is {\em bispecial} (BS) if it is at once
right and left special.

\subsection{$\beta$-integers}

In 1957, A. R\'enyi introduced the \emph{$\beta$-expansions} of
positive numbers~\cite{Renyi1957}. Consider a base $\beta > 1$,
then any $x \in [0,\infty)$ can be uniquely expressed in the form
\begin{equation}\label{}
    x = \sum^N_{i = -\infty} x_i \beta^i,
\end{equation}
where $x_i \in \{0,1,\ldots,\lceil\beta\rceil - 1\}$ and
$$
    0 \leq x - \sum^N_{i = n} x_i \beta^i < \beta^n \quad \text{ for all
    } n \leq N+1, n \in \Z.
$$
As it is usual in the everyday used cases of $\beta = 10$ and
$\beta = 2$, we write
$$
    (x)_\beta = x_N x_{N-1} \cdots x_1 x_0 \centerdot x_{-1} x_{-2}
    x_{-3}\cdots
$$
and we call this infinite word the $\beta$-expansion of $x$.

A number $x \in [0,\infty)$ is a \emph{$\beta$-integer} if $x_i =
0$ for all negative indices $i$, i.e., $(x)_\beta = x_N
x_{N-1}\cdots x_1 x_0 \centerdot$. All $\beta$-integers
distributed on the positive real line form a discrete set and the
distances between two neighboring $\beta$-integers are always
$\leq 1$. The set of all these distances can be described
precisely using the \emph{R\'enyi expansion of unity}
$\mathrm{d}_\beta(1) = t_1t_2t_3\cdots$, where $t_1 =
\lfloor\beta\rfloor$ and $0t_2t_3t_4 \cdots$ is the
$\beta$-expansion of $1 - t_1/\beta$. Parry~\cite{Parry1960}
proved that an infinite sequence $t_1t_2t_3\cdots$ of nonnegative
integers is the R\'enyi expansion of unity for some $\beta > 1$ if,
and only if, the following so-called \emph{Parry condition} is
satisfied:
\begin{equation}
    t_it_{i+1}t_{i+2}\cdots \prec t_1 t_2 t_3 \cdots \quad \text{ for all
    } i = 2,3,4,\ldots\ .
\end{equation}
Thurston~\cite{Thurston1989} proved that the distances between
neighboring $\beta$-integers take values in the set $\{\triangle_k
\mid k = 0,1,2, \ldots \}$ with
\begin{equation}\label{eq:distances}
    \triangle_k = \sum_{i=1}^\infty \frac{t_{i+k}}{\beta^i}.
\end{equation}
A number $\beta > 1$ is said to be a \emph{Parry number} if its
set of the distances defined in~\eqref{eq:distances} is finite. In
such cases the distribution of distances between $\beta$-integers
can be coded as an infinite word over a finite alphabet, we denote
this word by $\ubeta$. It is easy to see that $\beta$ is a Parry
number if, and only if, the R\'enyi expansion of unity is eventually
periodic. In particular, we distinguish \emph{simple Parry
numbers} for which
$$
    \mathrm{d}_\beta(1) = t_1t_2\cdots t_m0^\omega
$$
and \emph{non-simple Parry numbers} for which
$$
    \mathrm{d}_\beta(1) = t_1t_2\cdots t_m(t_{m+1} \cdots
    t_{m+p})^\omega.
$$
The positive integers $m,p$ are taken the least possible. This
implies that $t_{m} \neq 0$ in the case of a simple Parry number
and $t_{m} \neq t_{m+p}$ for the non-simple case. As shown by S.
Fabre~\cite{Fabre1995}, the word $\ubeta$ is also the unique fixed
point of the canonical substitution $\vpb$ associated with a Parry
number $\beta$. For a simple Parry number $\beta$, the
substitution $\vpb$ acts on the alphabet $\A=\{0,1,\dots,m-1\}$
and is given by
$$
            \begin{array}{rcl}
            \vpb(0)&=&0^{t_1}1\\
            \vpb(1)&=&0^{t_2}2\\
            &\vdots&\\
            \vpb(m\!-\!2)&=&0^{t_{m-1}}(m\!-\!1)\\
            \vpb(m\!-\!1)&=&0^{t_m}
            \end{array}
$$
For a non-simple Parry number $\beta$, the alphabet is ${\cal
A}=\{0,1,\dots,m+p-1\}$ and
\begin{equation}\label{subst_NS}
            \begin{array}{rcl}
            \vpb(0)&=&0^{t_1}1\\
            \vpb(1)&=&0^{t_2}2\\
            &\vdots&\\
            \vpb(m\!-\!1)&=&0^{t_m}m\\
            \vpb(m)&=&0^{t_{m+1}}(m\!+\!1)\\
            &\vdots&\\
            \vpb(m\!+\!p\!-\!2)&=&0^{t_{m+p-1}}(m\!+\!p\!-\!1)\\
            \vpb(m\!+\!p\!-\!1)&=&0^{t_{m+p}}m.
            \end{array}
\end{equation}
In both cases the substitution is primitive.

As we said, in this paper we focus on the non-simple Parry numbers
$\beta$. For them the incidence matrix of $\vpb$ reads
\begin{equation}\label{eq:icidence_matrix}
    \begin{array}{rcl}
        M = \left(
            \begin{array}{cccc}
              t_1 & 1 & 0 & \cdots  \\
              t_2 & 0 & 1 & \cdots  \\
              \vdots & \vdots & \vdots & \vdots  \\
              t_{m+p-2} & 0 & 0 & \cdots \\
              t_{m+p-1} & 0 & 0 & \cdots
            \end{array}
        \right.
      &
        \begin{array}{c}
          0 \\
          0 \\
          \vdots \\
          0 \\
          1 \\
        \end{array}
      &
        \left.
            \begin{array}{ccc}
              \cdots & 0 & 0 \\
              \cdots & 0 & 0 \\
              \vdots & \vdots & \vdots \\
              \cdots & 0 & 1 \\
              \cdots & 0 & 0 \\
            \end{array}
        \right)
      \\
       & \stackrel{\,\uparrow}{{{\scriptstyle m}\text{\scriptsize-th}}} &  \\
    \end{array}
\end{equation}
Since the substitution $\vpb$ is primitive, the dominant
eigenvalue of $M$ is simple. It is not difficult to prove that
this dominant eigenvalue is equal to $\beta$ and that the vector
$(\triangle_0,\triangle_1,\ldots,\triangle_{m+p-1})^T$, with
$\triangle_i$ defined in~\eqref{eq:distances}, is a right
eigenvector corresponding to it.

For description of the bispecial factors of $\ubeta$ it will be
important to track the last letters of words $\vpb^n(a), a \in \A,
n = 0,1,\cdots$. Therefore, we introduce the following notation.
\begin{dfn}
    For all $k, \ell \in \N$ we define the addition $\oplus : \N \times \N \rightarrow
    \A$ as follows.
    $$
        k \oplus \ell =    \begin{cases}
                            k + \ell & \text{if $k + \ell < m + p$, } \\
                            m + (k + \ell - m \text{ mod } p) & \text{otherwise.}
                        \end{cases}
    $$
    Similarly, if used with parameters $t_i$, we define for all $k, \ell \in \N, k + \ell >
    0$,
    $$
        t_{k \oplus \ell} =    \begin{cases}
                            t_{k + \ell} & \text{if $0 < k + \ell < m + p + 1$,} \\
                            t_{m + 1 + (k + \ell - m - 1 \text{ mod } p)} & \text{otherwise.}
                        \end{cases}
    $$
\end{dfn}
For example, employing this notation one can show that the word
$\vpb^n(a), a \in \A,$ has the suffix $0^{t_{a \oplus n}}(a \oplus
n)$.

\section{Maximal powers and bispecial factors}\label{MaxAndBispecial}

The critical exponent $\E(\ub)$ is defined as the supremum of the
set of indices $\ind{w}$ of all factors $w \in \L(\ub)$. We will
show that the set of factors important for evaluation of $\E(\ub)$
can be significantly reduced.

\begin{lem}\label{lem:BS_and_max_powers}
    Let $w \in \A^*$ have the maximal index in a recurrent
    infinite word $\ub$ between all its conjugates
    and let this index be strictly greater than one. Let $w^\ell
    w'$ be the maximal power of $w$ in $u$, where $\ell \geq 1$ and $w'$ is a proper prefix of $w$. Further, let $b$ be the last letter of $w$
    and let $a$ be the letter such that $w'a$ is a prefix of $w$.
    Then
    \begin{itemize}
        \item[(i)]  $b \notin \Le{w^\ell w'}$ and $a \notin \Rex{w^\ell w'}$,
        \item[(ii)] for $k = 0,1,\ldots,\ell - 1$, $w^k w'$ is a BS factor such that $b \in \Le{w^k w'}$ and $a \in \Rex{w^k
        w'}$.
    \end{itemize}
\end{lem}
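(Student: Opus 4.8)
The plan is to read off both parts from the single word $w^\ell w'$, exploiting maximality of the index in two guises: maximality of $\ind{w}$ among the conjugates of $w$ forbids two specific one-letter extensions, while the extensions that the maximal power $w^\ell w'$ \emph{does} possess supply the second extensions needed for bispeciality of the shorter powers $w^k w'$.

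For~(i), the claim $a \notin \Rex{w^\ell w'}$ is immediate: since $w'a$ is a prefix of $w$, the word $w^\ell w' a$ would be a power of $w$ strictly longer than $w^\ell w'$, contradicting that $w^\ell w'$ is the maximal power of $w$ in $\ub$. For $b \notin \Le{w^\ell w'}$ I would bring in the conjugates. Writing $w = \tilde w\, b$ with $b$ its last letter, one has $b\, w^\ell = (b\tilde w)^\ell b = S(w)^\ell b$, hence $b\, w^\ell w' = S(w)^\ell (b\, w')$; and since $w'$ is a proper prefix of $w$, the word $b\, w'$ is a prefix of $S(w) = b\tilde w$. Thus if $b\, w^\ell w'$ were a factor it would be a power of the conjugate $S(w)$ of index $\ell + (|w'|+1)/|w| > \ind{w}$, contradicting that $w$ has maximal index among its conjugates.

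For~(ii), fix $0 \le k \le \ell-1$. First I would produce the two prescribed extensions inside $w^\ell w'$. Since $k+1 \le \ell$, both $w^{k+1}$ and $w^{k+1}w'$ are prefixes of $w^\ell w'$; as $w'a$ is a prefix of $w$, the word $w^k w' a$ is a prefix of $w^{k+1}$, so $a \in \Rex{w^k w'}$, and since $w$ ends in $b$, the word $b\, w^k w'$ is a suffix of $w^{k+1}w' = w\cdot w^k w'$, so $b \in \Le{w^k w'}$. To get bispeciality I would transport the extensions of the maximal power down to its sub-power: $w^\ell w'$ admits at least one right extension $c$, and by recurrence at least one left extension $d$, and by~(i) we have $c \neq a$ and $d \neq b$. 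Because $w^k w'$ is at once a prefix and a suffix of $w^\ell w'$, the letter $c$ right-extends its suffix occurrence ($w^k w' c \in \L(\ub)$) and $d$ left-extends its prefix occurrence ($d\, w^k w' \in \L(\ub)$). Hence $\{a,c\} \subseteq \Rex{w^k w'}$ and $\{b,d\} \subseteq \Le{w^k w'}$, each of size two, so $w^k w'$ is bispecial.

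I expect the only genuine obstacle to be the left half of~(i): recognising that prepending the last letter $b$ converts $w^\ell w'$ into a strictly longer power of the cyclic shift $S(w)$. This is precisely where the hypothesis on conjugates is indispensable, and where the bookkeeping $b\, w^\ell = S(w)^\ell b$ must be set up correctly. Everything else reduces to routine tracking of prefixes and suffixes within the single word $w^\ell w'$.
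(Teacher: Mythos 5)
Your proof is correct and follows essentially the same route as the paper's own (much terser) argument: $a\in\Rex{w^\ell w'}$ would extend the maximal power, $b\in\Le{w^\ell w'}$ would produce a longer power of the conjugate $bwb^{-1}=S(w)$, and in (ii) the extensions guaranteed by (i) are transported from $w^\ell w'$ to its prefix/suffix occurrences of $w^kw'$. Your write-up merely makes explicit the bookkeeping ($bw^\ell w'=S(w)^\ell(bw')$ and the prefix/suffix inclusions) that the paper leaves to the reader.
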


\pf

$(i)$ If $b \in \Le{w^\ell w'}$, then the index of a conjugate of
$w$, namely $bwb^{-1}$, is greater than the index of $w$. If $a
\in \Rex{w^\ell w'}$, then $w^\ell w'$ is not the maximal power of
$w$.

$(ii)$ By $(i)$, there exists at least one letter $x \neq b$ such
that $x \in \Le{w^\ell w'}$; hence, $\{b,x\} \subset \Le{w^k w'}$.
Analogously for the case of right extensions.

\pfk

\begin{dfn} \label{dfn:set_B}
    Denote by $\Bub$ the set of (ordered) pairs $(v,w)$ of factors of an infinite word $\ub$
    satisfying the following conditions:
        \begin{itemize}
            \item[(B1)] $v$ is a BS factor,
            \item[(B2)] $wv$ is a power of $w$ in $\ub$.
        \end{itemize}
\end{dfn}
Having this set defined, we can propose the following
straightforward corollary of Lemma~\ref{lem:BS_and_max_powers}.
\begin{cor} \label{cor:critical_exp_alternative}
Given a uniformly recurrent infinite word $\ub$, we have:
    $$
        \E(\ub) = \sup\{\ind{w} \mid (v,w) \in \Bub \text{ for some } v  \}.
    $$
\end{cor}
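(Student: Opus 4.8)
The plan is to prove the two inequalities $\E(\ub) \geq \sup\{\ind{w} \mid (v,w) \in \Bub\}$ and $\E(\ub) \leq \sup\{\ind{w} \mid (v,w) \in \Bub\}$ separately. The first inequality is immediate: any pair $(v,w) \in \Bub$ has $w \in \L(\ub)$, so $\ind{w}$ is already one of the indices in the supremum defining $\E(\ub)$ in \eqref{criticalexponent}; taking the supremum over the smaller index set $\{w \mid (v,w)\in\Bub\}$ cannot exceed the supremum over all factors. Hence no work is needed in this direction beyond observing that the restricted supremum is bounded above by the full one.

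The substance lies in the reverse inequality. First I would fix an arbitrary factor $z \in \L(\ub)$ and argue that it suffices to bound $\ind{z}$ from above by the restricted supremum. The natural move is to replace $z$ by the conjugate $w$ of $z$ that has the \emph{maximal} index among all conjugates of $z$; since conjugates have the same length and, by uniform recurrence, the same maximal power structure up to cyclic rotation, one checks that $\ind{z} \leq \ind{w}$, so it is enough to handle this distinguished conjugate $w$. If $\ind{w} \leq 1$ then it is trivially dominated (the critical exponent of a uniformly recurrent word is at least $2$ in the non-degenerate cases of interest, and in any case such $z$ cannot realise the supremum). So I would assume $\ind{w} > 1$ and invoke Lemma~\ref{lem:BS_and_max_powers}: writing the maximal power of $w$ as $w^\ell w'$ with $\ell \geq 1$ and $w'$ a proper prefix of $w$, part~(ii) of the lemma guarantees that $v := w^0 w' = w'$ is a BS factor of $\ub$.

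It then remains to verify that the pair $(v,w) = (w', w)$ lies in $\Bub$, i.e.\ satisfies conditions (B1) and (B2) of Definition~\ref{dfn:set_B}. Condition (B1) is exactly the statement just extracted from Lemma~\ref{lem:BS_and_max_powers}(ii) with $k=0$, namely that $w'$ is bispecial. For (B2) I must check that $w v = w w'$ is a power of $w$ in $\ub$; but $w w' = w^1 w'$ is a prefix of $w^\ell w'$ (since $\ell \geq 1$), which is itself a power of $w$ occurring in $\ub$, so $w w'$ is a power of $w$ as required. Thus $(w', w) \in \Bub$, and therefore $\ind{w} \leq \sup\{\ind{w} \mid (v,w)\in\Bub\}$. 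Combining this with the reduction from $z$ to its maximal conjugate gives $\ind{z} \leq \sup\{\ind{w} \mid (v,w)\in\Bub\}$ for every factor $z$, and taking the supremum over all $z$ yields the reverse inequality.

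The main obstacle I anticipate is the conjugacy reduction step: one must argue carefully that passing from an arbitrary factor $z$ to the conjugate $w$ of maximal index does not lose any index, and that the hypothesis ``$w$ has maximal index among its conjugates'' of Lemma~\ref{lem:BS_and_max_powers} is legitimately available. This requires relating the maximal power of $z$ to that of a cyclic rotation of $z$ and using recurrence to ensure the relevant powers actually occur in $\ub$; the remaining verifications of (B1) and (B2) are then routine consequences of the lemma and the definition of a power.
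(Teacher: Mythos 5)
Your proof is correct and follows exactly the route the paper intends (the paper leaves the corollary as an immediate consequence of Lemma~\ref{lem:BS_and_max_powers}): the inequality $\geq$ is trivial, and for $\leq$ one passes to the conjugate of maximal index and applies part~(ii) of the lemma with $k=0$ to exhibit the pair $(w',w)\in\Bub$. The ``main obstacle'' you anticipate is in fact immediate: since $z$ is one of its own conjugates, $\ind{z}\leq\ind{w}$ holds by the very choice of $w$ as the conjugate maximising the index, with no further analysis of how powers transform under cyclic rotation needed.
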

Of course, the equality is true even if we consider for a given BS
factor $v$ only the shortest $w$ such that $(v,w) \in \Bub$. And
this will be our strategy: we will first find all BS factors $v$
in $\ubeta$ and then the corresponding shortest $w$. Usually, for
a given BS factor $v$, it is not difficult to find $w$ such that
$v$ is a power of $w$ and to verify that $wv$ is a factor of
$\ub$. What may be a problem is to prove that this $w$ is the
shortest such factor. Sometimes it is convenient to use the notion
of return words.
\begin{dfn}
    Let $w \in \L(\ub)$. If $v_L$ and $v_R$ satisfy
    \begin{itemize}
        \item[(i)]  $wv_R = v_Lw \in \L(\ub)$,
        \item[(ii)] there are exactly two occurrences of $w$ in $wv_R =
        v_Lw$,
    \end{itemize}
    then $v_L$ is a \emph{left return word (LRW)} of $w$, $v_R$ is
    a \emph{right return word (RRW)} of $w$, and $wv_L = v_Rw$ is a \emph{complete return word (CRW)} of
    $w$ in $\ub$.
\end{dfn}
For example, if
$$
    \ub =
    00\underline{0}010\underline{0}0100\underline{0}01\underline{0}01000\cdots,
$$
then all LRWs of 0010 visible in this prefix are 0010, 00100, 001.
Thus, a LRW of $w$ may be shorter than $w$ itself!
\begin{lem}\label{lem:RWs_method}
    Let $v$ be a power of $w$ and $\tilde{w}$ a prefix of $v$. If
    $w$ is a LRW of $\tilde{w}$, then $w$ is the root factor of $v$.
\end{lem}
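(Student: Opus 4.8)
The plan is to argue by contradiction: assuming $w$ is not the root of $v$, I will produce a third occurrence of $\tilde w$ inside the complete return word $w\tilde w$, contradicting the hypothesis that $w$ is a LRW of $\tilde w$, which guarantees exactly two occurrences of $\tilde w$ in $w\tilde w$. First I would record the structural setup. Since $v$ is a power of $w$ I may write $v=w^kw'$ with $k\ge 1$ and $w'$ a proper prefix of $w$ (if $|v|<|w|$ the conclusion is impossible, since the root of $v$ is a prefix of $v$, so that case is vacuous); in particular $w$ is a prefix of $v$, hence a prefix of $w^\omega$, and $\tilde w$, being a prefix of $v$, is a prefix of $w^\omega$ too. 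Consequently $X:=w\tilde w$ is again a prefix of $w^\omega$, so $X$ has period $|w|$ and the two occurrences of $\tilde w$ provided by the return-word hypothesis sit exactly at positions $0$ and $|w|$; being a LRW means there is no occurrence of $\tilde w$ at any position $1,\dots,|w|-1$ of $X$.

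Now let $r$ be the root of $v$ and suppose for contradiction that $|r|<|w|$. As above, both $w$ and $\tilde w$ are prefixes of $r^\omega$. The decisive step is to show that $|r|$ divides $|w|$, i.e.\ that $w=r^a$ is a genuine integer power of $r$ with $a\ge 2$. For this I would invoke the theorem of Fine and Wilf: the word $v$ carries the two periods $|w|$ (as a power of $w$) and $|r|$, so as soon as $|v|\ge |w|+|r|-\gcd(|w|,|r|)$ it must also have period $\gcd(|w|,|r|)$. Since $r$ is the root of $v$, $|r|$ is the smallest period of $v$, which forces $\gcd(|w|,|r|)=|r|$, that is $|r|\mid|w|$.

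Once $w=r^a$ with $a\ge 2$, I can close the argument: the concatenation $X=r^a\tilde w$ is a prefix of $r^\omega$, hence has period $|r|$, and therefore $\tilde w$ recurs at position $|r|$, which lies strictly between $0$ and $|w|$. This extra occurrence contradicts the LRW property. Hence $|r|=|w|$ and $w=r$ is the root of $v$.

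The hard part is precisely the divisibility step $|r|\mid|w|$, where the period $|r|$ must survive across the junction between $w$ and the appended copy of $\tilde w$ in $X$. If one only knew $v=w^{\ell}w'$ with $\ell=1$ and a very short $w'$, the length hypothesis of Fine and Wilf could fail and $|r|$ need not divide $|w|$, so no earlier occurrence of $\tilde w$ would be forced. In the intended applications, however, $v$ is a long power with $\ell\ge 2$, so $|v|\ge 2|w|>|w|+|r|$ and Fine--Wilf applies at once; I would make this length requirement explicit rather than leave it implicit. The remaining points---that the relevant prefixes of $w^\omega$ and $r^\omega$ behave as claimed and that the occurrence at position $|r|$ genuinely fits inside $X$ (which needs only $a\ge 1$)---are routine once the periods are in hand.
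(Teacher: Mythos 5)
The paper offers no proof of this lemma at all --- it is dispatched as a ``simple observation'' --- so your argument cannot be measured against an official one; it has to stand on its own, and in its core it does. The chain ``Fine--Wilf forces $\gcd(|w|,|r|)=|r|$, hence $w=r^a$ with $a\ge 2$, hence $w\tilde w=r^a\tilde w$ is a prefix of $r^\omega$ and $\tilde w$ recurs at position $|r|$, contradicting the exactly-two-occurrences clause of the LRW definition'' is correct. You are also right, and it is worth confirming, that the statement is genuinely false without a length hypothesis. In the paper's own example $d_\beta(1)=21^\omega$ (so $0\mapsto 001$, $1\mapsto 01$, $\ubeta=001001010010010100101\cdots$), the factor $w=010$ is a LRW of $\tilde w=01$, and $v=010$ is a power of $w$ having $\tilde w$ as a prefix, yet the root of $v$ is $01$, not $w$. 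So your insistence on making a length requirement explicit is not pedantry but a necessary repair. One genuine slip: you dismiss the case $|v|<|w|$ as ``vacuous because the conclusion is impossible.'' It is not vacuous --- the hypotheses are satisfiable there (take $v=\tilde w=0$ and $w=01$ in the same word) and the lemma simply fails; it is another instance of the same missing length assumption, not an empty case.

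A remark on economy: Fine--Wilf is heavier machinery than this needs. Since $r$ is the root of $v$, the word $v$ has period $|r|$, so $\tilde w=v_0\cdots v_{|\tilde w|-1}$ already occurs in $v$ at position $|r|$ as soon as $|r|+|\tilde w|\le |v|$; and because $v$ and $w\tilde w$ are both prefixes of $w^\omega$, that occurrence lies inside $w\tilde w$ at a position strictly between $0$ and $|w|$ --- the desired third occurrence, with no divisibility statement required. This one-line argument works under the hypothesis $|v|\ge |w|+|\tilde w|$ and is presumably what the authors had in mind when calling the lemma obvious. Your Fine--Wilf route is a legitimate alternative that instead handles the regime where $\tilde w$ is long relative to $|v|-|w|$, at the price of requiring $|v|\ge |w|+|r|-\gcd(|w|,|r|)$ (guaranteed, as you note, by $|v|\ge 2|w|$). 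Either version is acceptable once the length condition is stated; in the paper's applications $v$ is always long enough for both.
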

This simple observation turns out to be very useful. In the case
of $\ubeta$, there exists a simple tool for generating all BS
factors. For any BS factor $v$ we will find easily a factor $w$
such that $(v,w) \in \Bubeta$. Then, by a good choice of the
prefix $\tilde{w}$ from the previous lemma, we will prove that
this $w$ is the shortest possible.

\section{Bispecial factors in $\ubeta$}\label{bispecialUNAS}

Throughout the following text, the coefficient $t_1$ from~\eqref{subst_NS} will be greater than $1$.
Corollary \ref{cor:critical_exp_alternative} claims that to get
the critical exponent of $\ubeta$, it suffices to go through all
BS factors $v$ and corresponding $w$ (if it exists) such that
$(v,w) \in \Bubeta$. In what follows, we will take advantage of
having described all BS factors of $\ubeta$ in \cite{Klouda2009}. In
order to present the necessary results we need some more
sophisticated notation for BS factors.
\begin{dfn}
    Let $a,b,c,d \in \A$ such that $a \neq b$ and $c \neq d$. A factor $v \in \A^*$ is an \emph{$(a-c,b-d)$-bispecial factor} of an
    infinite word $\ub$ if both $avc$ and $bvd$ are factors of~$\ub$.
\end{dfn}
In the sequel, the aim is to introduce a mapping (Definition~\ref{map_BS}) which will help us to describe all BS
factors of $\ubeta$ as sequences of words generated by this
mapping from a finite number of short BS factors. We start with
some technical results.
\begin{lem} \label{lem:two_letters_factors}
    Let $a \in \A \setminus \{0\}$. Then the letter
    $$
        b = \max\{ j \mid 0^j \text{ is a suffix of } t_{1}t_2\cdots
        t_{a} \}
    $$
    is a left extension of the factor $a$. Another possible left extension of $a$ is $c$, where
    $$\begin{array}{lcr}
        c = \max\{ j \mid 0^j \text{ is a suffix of } t_{m+1}\cdots
        t_{m+p}\} && \text{for $a = m$}, \\
    c = \max\{ j \mid 0^j \text{ is a suffix of } t_{m+1}\cdots
        t_{m+p}t_{m+1} \cdots  t_{a} \} && \text{for $a>m$}.
        \end{array}
    $$
    There are no
    other left extensions.
\end{lem}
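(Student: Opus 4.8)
The plan is to read off the left neighbours of the letter $a$ from the block decomposition $\ubeta=\vpb(\ubeta)=\vpb(u_0)\vpb(u_1)\vpb(u_2)\cdots$. By the remark following the definition of $\oplus$, each block $\vpb(c')$ has the form $0^{t_{c'\oplus 1}}(c'\oplus 1)$, so the only non-zero letter of a block is its last one. Since $a\neq 0$, every occurrence of $a$ in $\ubeta$ is therefore the last letter of some source block $\vpb(c')$ with $c'\oplus 1=a$. By \eqref{subst_NS} this forces $c'=a-1$, and, only when $a=m$, also $c'=m+p-1$ (because $\vpb(m+p-1)=0^{t_{m+p}}m$). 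Thus $a$ has a single ``preperiodic'' source for $a\neq m$ and two sources (one preperiodic, one periodic) for $a=m$; this dichotomy is exactly what will produce the two candidate extensions $b$ and $c$.

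First I would establish the recursion attached to one source block $\vpb(c')$ ending in $a$. If the number of leading zeros of that block is positive, then $a$ is immediately preceded inside the block by $0$, contributing the extension $0$. If that number is zero, the block degenerates to the single letter $a$, so the left neighbour of $a$ is the last letter of the preceding block $\vpb(c'')$, namely $c''\oplus 1$, where $c''$ ranges over $\Le{c'}$; applying $\vpb$ to the factors $c''c'$ and using $\ubeta=\vpb(\ubeta)$ shows this correspondence is a bijection onto the occurrences, so the contribution of this source is precisely $(\Le{c'})\oplus 1$. In particular, for $a\neq m$ one gets $\Le{a}=\{0\}$ if $t_a\neq 0$ and $\Le{a}=(\Le{a-1})\oplus 1$ if $t_a=0$.

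Next I would unfold this recursion along the maximal run of zero coefficients ending at position $a$. Because $t_1>1$, tracing $a, a-1, a-2, \ldots$ through the positions carrying a zero coefficient terminates at the last nonzero coefficient $t_{a-j}$, where $j$ is the number of trailing zeros of $t_1t_2\cdots t_a$. Each backward step adds $1$ to the extension, and since $j\le a\le m+p-1$ no wrap-around of $\oplus$ occurs; hence the preperiodic source yields the single extension $b=j=\max\{j\mid 0^j\text{ is a suffix of }t_1\cdots t_a\}$. For $a\geq m$ the same unfolding applied to the periodic source (the route that reaches $m$ through $\vpb(m+p-1)$) runs through the coefficients $t_{m+1},\ldots,t_{m+p}$ and, once the zero run exhausts the partial period $t_{m+1}\cdots t_a$, continues into the previous full period; the accumulated shift is then the number of trailing zeros of $t_{m+1}\cdots t_{m+p}$ for $a=m$ and of $t_{m+1}\cdots t_{m+p}t_{m+1}\cdots t_a$ for $a>m$, giving exactly the stated $c$. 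The extension values themselves stay below $m+p$, so the cycle-closing wrap $(m+p-1)\oplus 1=m$ contributes only through the choice of source, not through a spurious increment.

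Finally, completeness follows because the first step enumerates all sources of $a$ and, within each source, the preceding letter is uniquely determined; hence $\Le{a}$ is contained in the union of the (at most two) source contributions, i.e. $\Le{a}\subseteq\{b\}$ for $a<m$ and $\Le{a}\subseteq\{b,c\}$ for $a\geq m$. The step I expect to be the main obstacle is the periodic unfolding for $a>m$: one must track the backward chain as it crosses the junction letter $m$, where both routes momentarily coexist, and verify that the $\oplus$-arithmetic of the accumulated shifts reproduces precisely the ``one full period plus partial period'' suffix in the definition of $c$, without over- or under-counting the wrap.
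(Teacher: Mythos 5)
Your argument is correct and is essentially the paper's own proof: both identify the (at most two) block-sources of the letter $a$ under $\ubeta=\vpb(\ubeta)$ --- namely $\vpb(a-1)$, plus $\vpb(m+p-1)$ when $a=m$ --- and trace the preceding letter back through the run of zero coefficients, which the paper compresses into ``$b$ is the last but one letter of $\vpb^a(0)$'' and ``$a\geq m$ can also appear as the last letter of $\vpb^{p+a-m}(m)$''. Your version merely spells out the same mechanism via the recursion $\Le{a}=(\Le{a-1})\oplus 1$ when $t_a=0$.
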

\pf

The statement is a direct consequence of this simple fact: if $t_a
> 0$, then $0$ is a left extension of $a$, if $t_a = 0$ and
$t_{a-1} > 0$, then $1$ is a left extension. Continuing in this
manner we get that $b$ defined as above is always a left extension
of $a$. In fact, $b$ is the last but one letter of $\vpb^a(0)$.

Since $a \geq m$ can appear not only as the last letter of
$\vpb^a(0)$ but also as the last letter of $\vpb^{p+a-m}(m)$, the
letter $c$ can be the other left extension.

\pfk Note that due to the assumption $t_1 > 1$ we must have
$\Le{0} = \A$. If $t_1 = 1$, then clearly $00$ cannot be a factor.

Let us denote throughout the following text
\begin{equation}\label{t_and_z}
t=\min\{t_m, t_{m+p}\} \quad \text{and} \quad \Le{0^tm} = \{0,z\}.
\end{equation}
\begin{cor} \label{cor:values_of _z}
The nonzero left extension $z$ of $0^t m$ is given by
    \begin{equation}\label{eq:def_z}
        z = \begin{cases}
               1 + \max\{ j \mid 0^j \text{ is a suffix of } t_{m+1}\cdots
        t_{m+p}t_{m+1}\cdots t_{m+p-1} \} & t_m > t_{m+p},\\
               1 + \max\{ j \mid 0^j \text{ is a suffix of } t_{1}\cdots t_{m-1} \}     & t_{m+p} > t_m.
            \end{cases}
    \end{equation}
\end{cor}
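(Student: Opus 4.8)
The plan is to reduce the computation of all left extensions of $0^t m$ to the computation of left extensions of a single letter, to which Lemma~\ref{lem:two_letters_factors} applies directly. I will first record a structural fact about the substitution~\eqref{subst_NS}: every image $\vpb(a)$ has the shape $0^{t_{a\oplus 1}}(a\oplus 1)$, a block of zeros followed by a single nonzero letter at its very end. Consequently each nonzero letter — in particular $m$ — occurs \emph{only} as the last letter of an image block. Writing $\ubeta=\vpb(\ubeta)$ as a concatenation of blocks $\vpb(u_i)$, every occurrence of $0^t m$ therefore has its terminal $m$ ending such a block, and since $t=\min\{t_m,t_{m+p}\}$ never exceeds the number of zeros in that block, the whole factor $0^t m$ sits inside one block, which is either $\vpb(m-1)=0^{t_m}m$ or $\vpb(m+p-1)=0^{t_{m+p}}m$.

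From this I read off the left extensions. If the block containing $0^t m$ is the one with strictly more than $t$ zeros, the letter preceding $0^t m$ is again $0$; this realizes the extension $0$. A nonzero extension $z$ can occur only when the block equals $0^t m$ exactly, in which case $z$ is the last letter of the preceding block $\vpb(y)$, i.e. $z=y\oplus 1$, where $y$ ranges over $\Le{\ell}$ with $\ell=m+p-1$ if $t_m>t_{m+p}$ (so $t=t_{m+p}$) and $\ell=m-1$ if $t_{m+p}>t_m$ (so $t=t_m$). Thus the whole problem is transferred to determining $\Le{\ell}$ via Lemma~\ref{lem:two_letters_factors}, and to checking that the resulting set of values $y\oplus 1$ is a single number.

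In the case $t_{m+p}>t_m$ this concludes almost at once: here $\ell=m-1<m$, so Lemma~\ref{lem:two_letters_factors} gives the \emph{single} left extension $b_0=\max\{j\mid 0^j\text{ is a suffix of }t_1\cdots t_{m-1}\}$, and $b_0\leq m-2$ gives no wrap, whence $z=b_0\oplus 1=b_0+1$ — exactly the second line of~\eqref{eq:def_z}. In the case $t_m>t_{m+p}$ we have $\ell=m+p-1\ge m$, and Lemma~\ref{lem:two_letters_factors} now offers \emph{two} candidate extensions, $b_1=\max\{j\mid 0^j\text{ suffix of }t_1\cdots t_{m+p-1}\}$ and $c_1=\max\{j\mid 0^j\text{ suffix of }t_{m+1}\cdots t_{m+p}t_{m+1}\cdots t_{m+p-1}\}$; the target formula is $z=c_1+1$. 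Since $b_1$ is always a genuine extension, it would also yield $z=b_1+1$, so the crux — and the main obstacle — is to prove that these two a~priori distinct candidates coincide, $b_1=c_1$.

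This I would settle by comparing trailing zeros. Both defining strings end with the common suffix $t_{m+1}\cdots t_{m+p-1}$ of length $p-1$. If its number $s$ of trailing zeros is less than $p-1$, the two counts already agree inside this shared suffix and $b_1=c_1=s$. If $s=p-1$, then $t_{m+1}=\cdots=t_{m+p-1}=0$ and the decisive letters are $t_m$ for $b_1$ and $t_{m+p}$ for $c_1$: here $t_m>t_{m+p}\ge 0$ forces $t_m\ge 1$, so $b_1=p-1$, while $t_{m+p}\ne 0$ — otherwise the whole period would vanish and $\beta$ would be a simple Parry number — gives $c_1=p-1$ as well. Hence $b_1=c_1$ and $z=c_1+1$, the first line of~\eqref{eq:def_z}; the degenerate value $p=1$, where the common suffix is empty, is checked directly, $t_m\ne 0$ giving $b_1=c_1=0$ and $z=1$. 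Finally, in both cases the extension $0$ is realized by the longer block and no further nonzero extension appears, confirming $\Le{0^t m}=\{0,z\}$ as declared in~\eqref{t_and_z}.
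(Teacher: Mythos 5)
Your proof is correct and follows essentially the same route as the paper's: use the block structure of $\vpb$ to locate every occurrence of $0^t m$ inside $\vpb(m-1)$ or $\vpb(m+p-1)$, so that the nonzero left extension is $y\oplus 1$ for $y$ the left extension of $m-1$ (if $t=t_m$) or of $m+p-1$ (otherwise), and then apply Lemma~\ref{lem:two_letters_factors}. The only difference is one of detail: where the paper simply asserts that the relevant left extension is ``(unique)'', you explicitly verify that the two candidates for $\Le{m+p-1}$ coincide via the trailing-zeros comparison, which is a welcome filling-in of the step the paper leaves implicit.
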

\pf

Since $t_m = t_{m+p}$ is not admissible, 0 must be a left
extension of $0^tm$. The other left extension $z$ is then given by
the (unique) left extension of $m-1$, if $t = t_m$, or of $m+p-1$,
otherwise.

\pfk

Another consequence of Lemma~\ref{lem:two_letters_factors}
is this:
\begin{cor} \label{cor:beginnings_of_LS_factors}
    If $v$ is a LS factor of $\ubeta$ containing at least one nonzero letter, then one of the following
    factors is a prefix of $v$:
    \begin{itemize}
      \item[(i)] $0^{t_1} 1$,
      \item[(ii)] $0^t m$,
      \item[(iii)] $0^{t_k}k$, if $k > m$ and $t = t_{m+1} = t_{m+2} = \cdots = t_{k-1} = 0$.
    \end{itemize}
\end{cor}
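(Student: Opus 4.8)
The plan is to locate the first nonzero letter of $v$ together with the block of zeros preceding it, and then to read the three admissible shapes off Lemma~\ref{lem:two_letters_factors}. Write $v = 0^{s}aw$, where $a$ is the first nonzero letter of $v$ and $s\ge 0$. Since $0^{s}a$ is a prefix of $v$, every left extension of $v$ is also a left extension of $0^{s}a$, so $\#\Le{0^{s}a}\ge\#\Le{v}\ge 2$ and $0^{s}a$ is itself LS; because $0^{s}a$ is a prefix of $v$, it suffices to show that $0^{s}a$ coincides with the word displayed in (i), (ii) or (iii).

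The structural input I would use is that in $\ubeta=\vpb(\ubeta)$ each image $\vpb(b)$ ends in a nonzero letter. Hence a nonzero letter $a$ is always the last letter of some block $\vpb(b)$, and the maximal run of $0$'s immediately to its left consists exactly of the leading zeros of that block. By \eqref{subst_NS} this fixes the run length at $t_a$ when $a\ne m$ (the block can only be $\vpb(a-1)=0^{t_a}a$, the hypothesis $t_1>1$ guaranteeing that this block is detected unambiguously), and at $t_m$ or $t_{m+p}$ when $a=m$ (the block being $\vpb(m-1)$ or $\vpb(m+p-1)$). For $a\ne m$ an occurrence therefore forces $s=t_a$, since any smaller $s$ leaves $0$ as the unique left extension. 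For $a=m$ the two possible run lengths make $s=\min(t_m,t_{m+p})=t$ the smallest value producing two distinct left extensions: the shorter block contributes a nonzero letter and the longer one contributes $0$, so $\Le{0^t m}=\{0,z\}$ as in \eqref{t_and_z}, \eqref{eq:def_z}, which is case (ii).

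It remains to treat $a\ne m$, where $s=t_a$ and $0^{t_a}a=\vpb(a-1)$; its left neighbour is the last letter of the preceding block $\vpb(y)$ with $y$ a left extension of the \emph{letter} $a-1$, so left-speciality of $0^{t_a}a$ forces $a-1$ to be LS as a single letter. Feeding this back into Lemma~\ref{lem:two_letters_factors}, a single nonzero letter strictly below $m$ has only the left extension $b$ and is never LS, whereas $0$ is LS because $t_1>1$ gives $\Le{0}=\A$. Thus either $a-1=0$, i.e. $a=1$, giving case (i); or $a-1\ge m$, i.e. $a=k>m$. In the latter case I descend: a single letter $a'>m$ can be LS only if $t_{a'}=0$ (otherwise it is always preceded by $0$), and then its left extensions are the $\vpb$-images of those of $a'-1$, so $a'-1$ is LS as well. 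Iterating from $k-1$ down to $m$ yields $t_{m+1}=\cdots=t_{k-1}=0$ and leaves the letter $m$ LS; since $m$ is always preceded by $0$ when both $t_m,t_{m+p}>0$, this last fact forces $t=\min(t_m,t_{m+p})=0$. These are exactly the hypotheses of case (iii), with prefix $0^{t_k}k$.

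The main obstacle is this descent for $a>m$: at each step one must justify both that the relevant zero-run has the claimed length (which is where the synchronisation afforded by $t_1>1$ enters) and that left-speciality is transported correctly by the map sending $y$ to the last letter of $\vpb(y)$. Here one must beware that this map is \emph{not} injective, as $m-1$ and $m+p-1$ both have last image $m$; but since only the forward implication ``$0^{t_a}a$ LS $\Rightarrow a-1$ LS'' is used, the collapse is harmless. The genuinely delicate point is pinning $s$ down exactly, rather than merely bounding it, and thereby showing that the three displayed words really exhaust the possible beginnings — in particular the analysis at the letter $m$, where both run lengths $t_m$ and $t_{m+p}$ must be accounted for.
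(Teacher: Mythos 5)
Your overall strategy -- cut $v$ at its first nonzero letter, write $v=0^{s}aw$, pin down $s$ from the block structure of $\vpb(\ubeta)$, and transport left--speciality backwards through the map sending $y$ to the last letter of $\vpb(y)$ -- is the natural way to flesh out what the paper leaves implicit (the paper offers no proof beyond ``consequence of Lemma~\ref{lem:two_letters_factors}''), and your handling of $a=1$ and of $m<a<m+p$ is sound, including the remark that only the forward implication survives the non-injectivity of the last-letter map. The gap is in the case $a=m$: you assert that $s=t=\min\{t_m,t_{m+p}\}$ is \emph{the} value for which $0^{s}m$ is LS, but you only exclude $s<t$ and never address $t<s\le\max\{t_m,t_{m+p}\}$. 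For $s=\max\{t_m,t_{m+p}\}$ all occurrences of $0^{s}m$ come from a single block ($\vpb(m-1)$ or $\vpb(m+p-1)$), and, exactly as in your own descent, this factor is LS as soon as that preimage letter is. This genuinely occurs. Take $d_\beta(1)=20(01)^{\omega}$, so $m=p=2$, $t=\min\{t_2,t_4\}=0$, and $\vpb\colon 0\mapsto 001,\ 1\mapsto 2,\ 2\mapsto 3,\ 3\mapsto 02$. Then $\Le{2}=\{0,1\}$, hence $\Le{3}=\{1,2\}$ and $\Le{02}=\{2,3\}$: the factor $02=0^{t_{m+p}}m=\vpb^{p}(m)$ is LS, yet your analysis for $a=m$ produces only $0^{t}m=2$, which is not a prefix of $02$. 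This missing factor is precisely the boundary instance $k=m+p$ of item (iii) (recall the paper's later remark that the factors in (iii) are the $\vpb^{k-m}$-images of $m$, and $\vpb^{p}(m)=0^{t_{m+p}}m$, the letter $k$ being reduced into $\A$ by $\oplus$); under a reading of (iii) that stops at $k=m+p-1$ your argument would instead be proving a false statement.

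The repair is cheap and uses machinery you already have: if $0^{s}m$ is LS with $s>t$, then (in the nontrivial case) $t_{m+p}>t_m$, $s=t_{m+p}$, every occurrence is the block $\vpb(m+p-1)$, and left--speciality forces $m+p-1$ to be LS; your iteration then yields $t_{m+1}=\cdots=t_{m+p-1}=0$ and $t=0$, i.e.\ exactly the hypotheses of (iii) for $k=m+p$. (In the other branch $s=t_m>t_{m+p}$ one needs $m-1$ to be LS, which by Lemma~\ref{lem:two_letters_factors} forces $m=1$ and lands in case (i) since then $0^{t_m}m=0^{t_1}1$.) With this one additional case your proof is complete.
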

Now, let us introduce the announced mapping that, when iterated, produces all BS factors
from a finite number of some short ones.
\begin{dfn}\label{map_BS}
    Let $\{a,b\}$ be a set of two distinct letters of $\A$. We define:
    $$
        f_L(b,a) = f_L(a,b) = \mathrm{the\ longest\ common\ suffix\ of\ }
        \vpb(a) \mathrm{\ and\ } \vpb(b)
    $$
    and
    $$
        f_R(b,a) = f_R(a,b) = \mathrm{the\ longest\ common\ prefix\ of\ }
        \vpb(a) \mathrm{\ and\ } \vpb(b).
    $$
    If $v$ is an $(a-c,b-d)$-bispecial factor of $\ubeta$, then the
    \emph{$f$-image} of $v$ is the factor
    $$
        f(v) = f_L(a,b)\vpb(v)f_R(c,d).
    $$
\end{dfn}
The $f$-image is defined so that it maps any BS factor to another
one.
\begin{lem}
    Let $v$ be an $(a-c,b-d)$-bispecial factor of $\ubeta$.
    Then we have
    $$
        f_L(a,b) = \begin{cases}
                        0^tm & \text{ if } \{a,b\} = \{m-1, m+p-1\},\\
                        \e & \text{ otherwise},
                    \end{cases}
    $$
    and
    $$
        f_R(c,d) = 0^{\min\{t_{c\oplus 1}, t_{d\oplus 1}\}}.
    $$
    The $f$-image of $v$ is then an $(a'-c',b'-d')$-bispecial factor,
    where $c'$ and $d'$ are the first letters of factors $0^{t_{c\oplus 1} - \min\{t_{c\oplus 1},
    t_{d\oplus 1}\}}(c\oplus 1)$ and $0^{t_{d\oplus 1} - \min\{t_{c\oplus 1}, t_{d\oplus 1}\}}(d \oplus 1)$, respectively, and
    $a'$ and $b'$ are either $0$ and $z$, if $\{a,b\} = \{m-1,m+p-1\}$, or $a \oplus 1$ and $b \oplus
    1$, otherwise.
\end{lem}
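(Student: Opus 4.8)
The plan is to verify the three claimed formulas directly from the definitions of $f_L$, $f_R$, and the $f$-image, using the explicit form of the substitution $\vpb$ in~\eqref{subst_NS}. The computation of $f_L(a,b)$ is the most delicate and is the main obstacle, so I would address it first. Recall $f_L(a,b)$ is the longest common suffix of $\vpb(a)$ and $\vpb(b)$. From~\eqref{subst_NS}, each image $\vpb(a)$ ends in the single letter $a\oplus 1$ (that is, $a+1$, or the appropriate letter in the periodic tail when $a=m+p-1$), preceded by a block of zeros. Hence two images $\vpb(a)$ and $\vpb(b)$ can share a nonempty suffix only if their final letters coincide, i.e.\ $a\oplus 1 = b\oplus 1$. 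Since $a\neq b$, the only way to have $a\oplus 1 = b\oplus 1$ is the collision forced by the periodic tail of $\mathrm{d}_\beta(1)$, namely $\vpb(m-1)=0^{t_m}m$ and $\vpb(m+p-1)=0^{t_{m+p}}m$, both ending in $m$. In that case the common suffix is $0^{\min\{t_m,t_{m+p}\}}m = 0^t m$ by the definition~\eqref{t_and_z} of $t$; in every other case the final letters differ and $f_L(a,b)=\e$. This establishes the first formula.

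Next I would compute $f_R(c,d)$, the longest common prefix of $\vpb(c)$ and $\vpb(d)$. By~\eqref{subst_NS} every image $\vpb(c)$ begins with $0^{t_{c\oplus 1}}$ followed by a nonzero letter (the letter $c\oplus 1$), where I use that the suffix description $0^{t_{a\oplus n}}(a\oplus n)$ of $\vpb^n(a)$ specializes at $n=1$ to give the leading zero-block $0^{t_{c\oplus 1}}$. Two such words therefore agree exactly on the shorter of the two initial zero-blocks, and they must differ at the first subsequent position because one of them already carries its nonzero letter there while $c\neq d$ forces the nonzero letters $c\oplus 1$ and $d\oplus 1$ to differ. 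Consequently the longest common prefix is precisely $0^{\min\{t_{c\oplus 1},t_{d\oplus 1}\}}$, which is the second formula. The one point to check carefully is that the nonzero letters genuinely differ, i.e.\ $c\oplus 1\neq d\oplus 1$; this follows from $c\neq d$ together with injectivity of $\oplus 1$ on $\A$, the only potential collision $\{m-1,m+p-1\}$ being irrelevant here because it would require $c\neq d$ with equal successors, contradicting that these two inputs map under $\oplus 1$ to the same letter only in the left-suffix situation already handled.

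Finally I would identify the $f$-image $f(v)=f_L(a,b)\,\vpb(v)\,f_R(c,d)$ as a bispecial factor and read off the parameters $a',b',c',d'$. Since $v$ is $(a-c,b-d)$-bispecial, both $avc$ and $bvd$ lie in $\L(\ubeta)$, so applying $\vpb$ gives $\vpb(a)\vpb(v)\vpb(c)$ and $\vpb(b)\vpb(v)\vpb(d)$ in the language. Factoring out the common suffix $f_L(a,b)$ on the left of $\vpb(v)$ and the common prefix $f_R(c,d)$ on the right, the residual letters immediately preceding $f(v)$ are the last letters of $\vpb(a)f_L(a,b)^{-1}$ and $\vpb(b)f_L(a,b)^{-1}$, while the residual letters immediately following are the first letters of $f_R(c,d)^{-1}\vpb(c)$ and $f_R(c,d)^{-1}\vpb(d)$. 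On the right this first letter is exactly the leading symbol of $0^{t_{c\oplus 1}-\min\{t_{c\oplus 1},t_{d\oplus 1}\}}(c\oplus 1)$, and analogously for $d$, giving the stated $c',d'$. On the left, in the generic case $f_L(a,b)=\e$ so the preceding letters are simply the last letters of $\vpb(a),\vpb(b)$, namely $a\oplus 1$ and $b\oplus 1$; in the special case $\{a,b\}=\{m-1,m+p-1\}$ the stripped suffix is $0^tm$ and the two distinct left extensions of $0^tm$ are $0$ and $z$ by~\eqref{t_and_z}, yielding $a'=0$, $b'=z$. Assembling these observations gives the claimed description, completing the proof.
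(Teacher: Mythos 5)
The paper states this lemma without any proof (there is no \verb|\pf| block after it; the surrounding text treats it as immediate from the form of $\vpb$ together with Lemma~\ref{lem:two_letters_factors} and Corollary~\ref{cor:values_of _z}), so there is no in-paper argument to compare against. Your direct verification from \eqref{subst_NS} is exactly the computation the authors are implicitly relying on, and your treatment of $f_L$, of the generic case of $f_R$, and of the extension letters $a',b',c',d'$ (including the switch to $\Le{0^tm}=\{0,z\}$ in the case $\{a,b\}=\{m-1,m+p-1\}$, where $\vpb(a)f_L(a,b)^{-1}$ may be empty and the ``last letter of the residue'' description breaks down) is correct in substance.

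One step is justified incorrectly, though the conclusion survives. In the $f_R$ computation you claim the collision $\{c,d\}=\{m-1,m+p-1\}$ is ``irrelevant'' because $\oplus 1$ is injective on the right-extension side --- but $\oplus 1$ is simply not injective on $\A$ (both $m-1$ and $m+p-1$ map to $m$), and nothing in the hypotheses forbids a bispecial factor from having right extensions $c=m-1$ and $d=m+p-1$; the appeal to ``the left-suffix situation already handled'' is circular. The correct repair is that in this collision case one has $t_{c\oplus 1}=t_m\neq t_{m+p}=t_{d\oplus 1}$ (by minimality of $m$ and $p$ for a non-simple Parry number), so the two initial zero-blocks have different lengths and the words $\vpb(c),\vpb(d)$ already diverge at position $\min\{t_{c\oplus1},t_{d\oplus1}\}+1$ (a zero against the letter $m$); the inequality $c\oplus1\neq d\oplus1$ is only needed when $t_{c\oplus1}=t_{d\oplus1}$, and in that subcase the collision genuinely cannot occur. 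With that local fix, and a one-line check that $c'\neq d'$ (same dichotomy), the proof is complete.
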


Since the $f$-image is again a BS factor, we can construct a
sequence of $f^n$-images of some starting BS factor. It is easy to
see that any $(a-c,b-d)$-bispecial factor containing at
least two nonzero letters has a unique
$f$-preimage, i.e., it is equal to $f(v)$ for a~unique $v$ from Definition~\ref{map_BS}.
This, together with
Corollary~\ref{cor:beginnings_of_LS_factors} (note that $0^{t_k}k$
from $(iii)$ are just $\vpb^{k-m}$-images of $m$), implies that
any BS factor is an $f^n$-image of one of these BS factors:
\begin{itemize}
    \item[(I)] $0^k, 0 < k \leq t_1 - 1$,
    \item[(II)] $0^tm0^\ell, 0 \leq \ell \leq t_1$.
\end{itemize}
In fact, as we shall see, even
$0^tm0^\ell$ is an $f^n$-image of an $(a-c,b-d)$-bispecial factor
$\e$ where $p$ divides $(a-b)$.
\begin{lem} \label{lem:generators_of_BS}
    Let $v$ be an $(a-c,b-d)$-bispecial factor of $\ubeta$. Then
    there exist $n \in \N$ and $a',b',c',d' \in \A$ such that $v$
    is an $f^n$-image of the $(a'-c',b'-d')$-bispecial factor $0^k$ with $0 \leq k \leq t_1 - 1$.
\end{lem}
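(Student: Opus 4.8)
The plan is to prove the statement by a two-stage reduction along the map $f$. The first stage is already available from the discussion preceding the lemma: since every $(a-c,b-d)$-bispecial factor with at least two nonzero letters has a unique (again bispecial) $f$-preimage, one may repeatedly pass to $f$-preimages until one reaches a bispecial factor with at most one nonzero letter, and by Corollary~\ref{cor:beginnings_of_LS_factors} (the factors $0^{t_k}k$ of case (iii) being $\vpb^{k-m}$-images of $m$, hence already accounted for) such a factor is either of type (I), i.e. a power $0^k$ with $0<k\le t_1-1$, or of type (II), i.e. $0^tm0^\ell$ with $0\le\ell\le t_1$. Thus every bispecial factor $v$ is an $f^n$-image of a type-(I) or type-(II) factor; I shall simply cite this.

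The type-(I) factors $0^k$ with $0<k\le t_1-1$, together with the empty word $\e=0^0$, are exactly the generators appearing in the statement, so it suffices to show that each type-(II) factor is itself an $f$-image of $\e$. Indeed, once $0^tm0^\ell=f^{n_2}(\e)$ is established, composing with the first-stage descent $v=f^{n_1}(0^tm0^\ell)$ yields $v=f^{n_1+n_2}(\e)$, which is the assertion with $k=0$. To carry out the second stage I would compute $f(\e)$ directly. Regarding $\e$ as an $(a-c,b-d)$-bispecial factor, the preceding lemma gives $f_L(a,b)=0^tm$ precisely for $\{a,b\}=\{m-1,m+p-1\}$, which is the unique pair with $p\mid(a-b)$ and $(m-1)\oplus1=(m+p-1)\oplus1=m$, so that $\vpb(m-1)=0^{t_m}m$ and $\vpb(m+p-1)=0^{t_{m+p}}m$ share the suffix $0^tm$. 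Fixing this pair and using $\vpb(\e)=\e$ gives
\begin{equation*}
  f(\e)=f_L(m-1,m+p-1)\,f_R(c,d)=0^tm\,0^{\min\{t_{c\oplus1},\,t_{d\oplus1}\}},
\end{equation*}
so that $f(\e)=0^tm0^{\ell}$ with $\ell=\min\{t_{c\oplus1},t_{d\oplus1}\}$, where $c$ and $d$ are the right extensions making $\e$ bispecial, i.e. $(m-1)c$ and $(m+p-1)d$ are factors of $\ubeta$ with $c\neq d$.

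The main obstacle is exactly this last requirement. For each $\ell$ with $0^tm0^\ell$ bispecial I must exhibit admissible right extensions $c\in\Rex{m-1}$ and $d\in\Rex{m+p-1}$, $c\neq d$, realizing $\min\{t_{c\oplus1},t_{d\oplus1}\}=\ell$; note this is genuine existence to be checked, not a formality, since a factor with a single nonzero letter is not forced to have an $f$-preimage. To this end I would establish a description of the right extensions of $m-1$ and $m+p-1$ dual to Lemma~\ref{lem:two_letters_factors}: because $m-1$ occurs in $\ubeta$ only as the last letter of $\vpb(m-2)$ and $m+p-1$ only as the last letter of $\vpb(m+p-2)$, their right extensions are the first letters of the blocks $\vpb(x)$ for $x$ ranging over $\Rex{m-2}$, respectively $\Rex{m+p-2}$, and I would unwind this using $\vpb(x)=0^{t_{x\oplus1}}(x\oplus1)$.

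As a consistency check supporting that $n_2=1$ suffices, I would also record that $0^tm0^\ell$ can \emph{only} arise as $f(\e)$: any bispecial preimage $v'$ would satisfy $\#\text{(nonzero letters of }f(v'))=|v'|+[\,f_L=0^tm\,]$, and since $0^tm0^\ell$ has exactly one nonzero letter this forces either $v'=\e$, or $|v'|=1$ with $f_L=\e$; the latter is excluded because a single nonzero letter is never left special by Corollary~\ref{cor:beginnings_of_LS_factors}. Thus the whole proof reduces to the right-extension case analysis of the third paragraph, the remaining steps being the bookkeeping of the two compositions; the recursion describing $\Rex{m-1}$ and $\Rex{m+p-1}$, and the verification that the induced values of $\ell$ cover precisely the bispecial factors $0^tm0^\ell$, is where I expect the real work to lie.
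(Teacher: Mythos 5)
Your overall architecture matches the paper's: reduce via $f$-preimages to the initial factors $0^k$ and to $0^tm0^\ell$, then show that every bispecial $0^tm0^\ell$ is itself an $f^{n}$-image of $\e$. Your route for the second stage is genuinely different, though: you aim to exhibit $0^tm0^\ell$ as a \emph{single} $f$-image of $\e$ decorated as an $(m-1\,$--$\,c,\ m+p-1\,$--$\,d)$-bispecial factor, whereas the paper works at the $\vpb^m$-level, analysing which blocks $\vpb^m(x)$ can follow $0^tm$ (using that $0^tm$ occurs only as a suffix of $\vpb^{m+pk}(0)$), deduces $t_{x\oplus k}=0$ for $k=1,\dots,m-1$, and exhibits $0^tm0^\ell$ as an $f^{m}$-image of $\e$ decorated as $(a'\,$--$\,x,\ b'\,$--$\,0)$ with $b'=a'+p$. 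Your computation $f(\e)=0^tm\,0^{\min\{t_{c\oplus1},t_{d\oplus1}\}}$ and your uniqueness check (that $\e$ with $\{a,b\}=\{m-1,m+p-1\}$ is the only possible $f$-preimage of a word with a single nonzero letter) are both correct.

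The problem is that you stop exactly at the decisive step. The lemma requires \emph{existence}, not uniqueness, of the preimage: for every $\ell$ with $0^tm0^\ell$ bispecial you must produce $c\in\Rex{m-1}$ and $d\in\Rex{m+p-1}$ with $c\neq d$ and $\min\{t_{c\oplus1},t_{d\oplus1}\}=\ell$, and you explicitly defer this (``where I expect the real work to lie''). Moreover, the attack you sketch --- unwinding $\Rex{m-1}$ and $\Rex{m+p-1}$ through $\vpb(x)=0^{t_{x\oplus1}}(x\oplus1)$ --- looks only at right extensions and does not by itself rule out the bad scenario in which the right-specialness of $0^tm0^\ell$ is witnessed by two extensions coming from the \emph{same} one of the two sets. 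The missing idea is the link to the \emph{left} extensions: every occurrence of $0^tm$ is the suffix of a block $\vpb(m-1)$ or of a block $\vpb(m+p-1)$, and (since $t_m\neq t_{m+p}$) the two left extensions $0$ and $z$ of $0^tm$ distinguish these two block types. Hence left-specialness of $0^tm0^\ell$ already forces one extension from each of $\Rex{m-1}$ and $\Rex{m+p-1}$ reaching depth at least $\ell$, and combining this with any extension witnessing the divergence at depth exactly $\ell$ yields the required mixed pair $(c,d)$. Until an argument of this kind is supplied, the proof is incomplete; the paper avoids the issue by instead characterizing directly when $0^tm0^\ell$ can be right special at all.
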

\pf
The only thing to show is that any
$(0-\tilde{b}, z-\tilde{d})$-bispecial factor $0^tm0^\ell, 0 \leq
\ell \leq t_1$, is an $f^n$-image of $\epsilon$.
It can happen that $0^tm0^\ell$ cannot be BS.
Since $0^tm$ occurs only as a~suffix of $\varphi_\beta^{m+pk}(0), k\geq 0$,
the factor $0^tm$ is always followed in $\ubeta$ either by $\vpb^m(x),
x \in \A$, or by $\vpb^{m+pk}(y), k \geq 1, y \in \A$. Since
$\vpb^{m+pk}(y)$ always begins in $0^{t_1}1$, in order for
$0^tm0^\ell$ can be BS, we need $\vpb^m(x)$ does not begin in
$0^{t_1}1$ for some $x \in \A$. This implies that $t_{x\oplus k}=0$ for $k \in \{1,\dots,m-1\}$. Thus, we have the condition
$\vpb^m(x) = 0^{t_{x \oplus m}}(x \oplus m)$. But in such a case,
$v$ is the $f^{m}$-image of $(a'-x,b'-0)$-bispecial
factor $\e$ with $a' \in \Le{x}$ and $b' = a' + p$.

\pfk
\begin{dfn}
    The $(a-c,b-d)$-bispecial factors $0^k, 0 \leq k < t_1$, will be called \emph{initial}.
\end{dfn}
Thus, all BS factors can be generated from a few short initial
factors applying very simple rule repetitively. This rule can be
even more simplified.
\begin{dfn}\label{dfn:z_n}
Let $n \in \N$ and $n = \ell m  + k, 0 \leq k < m$. Then we put
$$
    z^{(n)} = \begin{cases}
                \e & \text{if $n < m$,} \\
                \vpb^{k}(0^t m) \vpb^{k+m}(0^t m) \cdots \vpb^{(\ell-1)m+k}(0^t
                m) & \text{if $n \geq m$ and $z$ is a multiple of
                $p$,}\\
                \vpb^{n-m}(0^tm) & \text{otherwise.}
            \end{cases}
$$
\end{dfn}
\begin{lem} \label{lem:BS_generator}
Let $v$ be an $(a-c,b-d)$-bispecial factor such that $t_{c \oplus
1} t_{c  \oplus 2}\cdots \preceq t_{d \oplus 1} t_{d \oplus
2}\cdots$. The $f^n$-image of $v$ is equal to $u_1\vpb^n(v)u_2$,
where:
$$
    u_2 = \lcp{\vpb^n(c)}{\vpb^n(d)} = \vpb^n(c)(c \oplus n)^{-1}
$$
and
$$
    u_1 = \begin{cases}
            \e & \text{if $p$ does not divide $a - b$,} \\
            z^{(n + \min\{a,b\})} & \text{otherwise}.
         \end{cases}
$$
\end{lem}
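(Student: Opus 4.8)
The plan is to prove the statement by induction on $n$, tracking how repeated application of $f$ splits $f^n(v)$ into a left part, the bulk $\vpb^n(v)$, and a right part. Writing $f^n(v)$ as an $(a_n-c_n,b_n-d_n)$-bispecial factor, the definition $f(w)=f_L(a,b)\vpb(w)f_R(c,d)$ together with the lemma computing $f_L$, $f_R$ and the letters $a',b',c',d'$ gives $f^n(v)=L_n\vpb^n(v)R_n$ with $L_{n+1}=f_L(a_n,b_n)\vpb(L_n)$, $R_{n+1}=\vpb(R_n)f_R(c_n,d_n)$, and $L_0=R_0=\e$. Here $(a_{n+1},b_{n+1})=(a_n\oplus1,b_n\oplus1)$ unless $\{a_n,b_n\}=\{m-1,m+p-1\}$, in which case it is $(0,z)$, while $(c_{n+1},d_{n+1})$ is read off from the first letters of $0^{t_{c_n\oplus1}-\mu}(c_n\oplus1)$ and $0^{t_{d_n\oplus1}-\mu}(d_n\oplus1)$, with $\mu=\min\{t_{c_n\oplus1},t_{d_n\oplus1}\}$. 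The whole argument thus reduces to solving these two recurrences.

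For the right part I would first observe that $(c_n,d_n)$ is exactly the pair of first differing letters of $\vpb^n(c)$ and $\vpb^n(d)$, so that $R_n=\lcp{\vpb^n(c)}{\vpb^n(d)}$: by induction $\vpb^{n+1}(c)$ and $\vpb^{n+1}(d)$ agree on $\vpb(R_n)$ and then on $0^{\mu}$, after which they differ, the only possible collision $c_n\oplus1=d_n\oplus1$ being ruled out because $\{c_n,d_n\}=\{m-1,m+p-1\}$ forces $t_m\neq t_{m+p}$. The hypothesis $t_{c\oplus1}t_{c\oplus2}\cdots\preceq t_{d\oplus1}t_{d\oplus2}\cdots$ together with the Parry condition guarantees that the $c$-side always carries the smaller coefficient, so $\mu=t_{c_n\oplus1}$ and $c_{n+1}=c_n\oplus1$, whence $c_n=c\oplus n$. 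Substituting $R_n=\vpb^n(c)(c\oplus n)^{-1}$ into the recurrence and using $\vpb(c\oplus n)=0^{t_{c\oplus(n+1)}}(c\oplus(n+1))$ closes the induction and yields $R_n=\vpb^n(c)(c\oplus n)^{-1}=\lcp{\vpb^n(c)}{\vpb^n(d)}$, which is the asserted $u_2$.

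For the left part the easy case is $p\nmid a-b$. Here I would check that $(x,y)\mapsto(x\oplus1,y\oplus1)$ preserves $a_n-b_n$ modulo $p$, whereas the special pair $\{m-1,m+p-1\}$ has difference $\pm p\equiv0\pmod p$; hence the orbit never meets the special pair, every $f_L(a_n,b_n)=\e$, and $L_n=\e=u_1$.

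The hard part, and the main obstacle, is the case $p\mid a-b$, which must produce $z^{(n+\min\{a,b\})}$. The strategy is to follow the orbit of $(a_n,b_n)$: starting from a pair of difference $p$ it reaches $\{m-1,m+p-1\}$ after $m-1-\min\{a,b\}$ steps, contributes one factor $0^tm$, and moves to $(0,z)$. If $p\nmid z$ the difference $z$ is never again $\equiv0\pmod p$, the special pair does not recur, and the single seed $0^tm$ is merely pushed forward by $\vpb$, giving $L_n=\vpb^{\,n+\min\{a,b\}-m}(0^tm)=z^{(n+\min\{a,b\})}$. If $p\mid z$ the orbit returns to $\{m-1,m+p-1\}$ with exact period $m$ (this rests on the identity $z\oplus(m-1)=m+p-1$, valid whenever $p\mid z$, which normalizes any wrap-around and makes the period independent of the size of $z$), so a new $0^tm$ is prepended every $m$ steps while the earlier ones are iterated by $\vpb$. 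Unwinding the recurrence then assembles exactly the product $\vpb^{k}(0^tm)\vpb^{k+m}(0^tm)\cdots\vpb^{(\ell-1)m+k}(0^tm)$ with $n+\min\{a,b\}=\ell m+k$, i.e.\ $z^{(n+\min\{a,b\})}$ of Definition~\ref{dfn:z_n}. I expect the delicate point to be precisely this bookkeeping: confirming that the special pair recurs with period exactly $m$, and tracking the $\vpb$-exponent accumulated by each seed so that both the number of factors and their left-to-right order match Definition~\ref{dfn:z_n}.
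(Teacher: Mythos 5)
Your proof is correct and takes essentially the same route as the paper: the paper likewise tracks the orbit of the left/right extension pairs under iteration of $f$, outsourcing the $u_2$ formula to the cited lemmas of Klouda (2009) and dismissing the $p\mid z$ bookkeeping for $u_1$ as ``obvious''. You simply fill in those two omissions --- the inductive lcp computation (with the collision $\{m-1,m+p-1\}$ ruled out by $t_m\neq t_{m+p}$) and the identity $z\oplus(m-1)=m+p-1$ forcing the special pair to recur with period $m$ --- and both check out, including the left-to-right order of the factors $\vpb^{k+jm}(0^tm)$ in $z^{(n+\min\{a,b\})}$.
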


\pf

The fact that $u_2 = \vpb^n(c)(c \oplus n)^{-1}$ is proved in~\cite[Lemma
45,46]{Klouda2009}. As for the form of $u_2$, if $p \nmid a - b$,
then $f_L(a,b) = f_L(a \oplus 1, b \oplus 1) = \cdots = f_L(a
\oplus n, b \oplus n) = \e$, therefore $u_1 = \e$. If $p \mid a -
b$ (assume $a < b$), then $a < m$. We must have $a \oplus (m-a) = b
\oplus (m-a) = m$ and so the $f^{m-a}$-image of $v$ begins in
$z^{(m)} = 0^tm$. The rest is obvious.

\pfk

\section{Main theorem}\label{main}

Having the simple tool for description of all BS factors, it
remains to find the shortest factors which form with them a pair
from $\B(\ubeta)$.

Imagine that we are given a BS factor $v$ which arises from a
nonempty initial factor $0^s, s > 0$. According to
Lemma~\ref{lem:BS_generator}, $v$ can have only one of the
following two forms: \\[1mm]
$(a)$ $v = \vpb^n(0^s)\vpb^n(c)(c \oplus n)^{-1}$ for some $c \in
\A\setminus\{0\}$. In this case it is obvious that $(v,\vpb^n(0))$
is a pair from $\B(\ubeta)$. Moreover, it is not difficult to prove
by induction on $n$ that $\vpb^n(0)$ is the
shortest such factor (see Lemma~\ref{lem:shortest_bez_zn}).\\[1mm]
$(b)$ $v = z^{(n + r)}\vpb^n(0^s)\vpb^n(c)(c \oplus n)^{-1}$ for
some $s > 0$, $c \in \A$, and $r \in \N$. The cases when $r > 0$
will be studied later on. For now assume that $v = z^{(n)}
\vpb^n(0^s) \vpb^n(c)(c \oplus n)^{-1}$. As a direct consequence
of the definitions of $\vpb$ and $z^{(n)}$, we get that $z^{(n)}$ is a
suffix of $\vpb^n(0)$. This yields that
$(v,z^{(n)}\vpb^n(0)(z^{(n)})^{-1})$ is a good candidate for being
an element of $\B(\ubeta)$.

To prove that $w = z^{(n)}\vpb^n(0)(z^{(n)})^{-1}$ is the shortest
possible choice is a bit more problematic than in case $(a)$. In
order to do so we will use Lemma~\ref{lem:RWs_method} with
$\tilde{w} = z^{(n)}$.

Let us take as an example $\beta$ such that $\mathrm{d}_\beta(1) =
33(02)^\omega$. Then $f^2$-image of $(0-1,2-0)$-bispecial factor
$00$ equals to $(0-3,2-0)$-bispecial factor
$$
    v =
    \overbrace{002}^{z^{(2)}}\underbrace{0001000100010002}_{\vpb^2(0)}\underbrace{0001000100010002}_{\vpb^2(0)}\overbrace{000100010001}^{\vpb^2(1)(3)^{-1}}.
$$
Clearly, the factor $w = 0020001000100010$ is the LRW of $002$ and so $w$ is the
root factor of $v$.  It turns out that this argument can be used
in general if we replace $002$ with $z^{(n)}$ and as the factor
$v$ we take
$v^{(n)}$ defined as follows: \\
Denote by $v^{(0)}$ a $(0-c,b-d)$-bispecial factor $0^{s}, s > 0,$
and by $v^{(n)}$ its $f^n$-image. By Lemma~\ref{lem:BS_generator}
we have
\begin{equation}\label{def_of_v_n}
    v^{(n)} = z^{(n)}\vpb^{n}(0^{s}c)(c \oplus n)^{-1}.
\end{equation}
Using these techniques, we will prove the following theorem.
\begin{thm}\label{thm:main_thm}
    Let $z$ be defined as in~\eqref{t_and_z}. If $t_1 \geq 4$ or if $t_1 = 3$ and $p$ does not divide $z$, then
    the critical exponent satisfies
    $$
        \E(\ubeta) = \sup_{n \in \N} \left\{t_1 + \frac{|z^{(n)}| + |\vpb^n(1)| -
        1}{|\vpb^n(0)|}\right\}
    $$
    and the ultimate critical exponent equals
    $$\E^*(\ubeta)=\begin{cases}
                                \beta + \frac{1}{\beta^m -1}(t +
                                \triangle_m) & \text{ if $p$ divides
                                $z$,} \\
                                \beta + \frac{1}{\beta^m}(t + \triangle_m)
                                & \text{ otherwise,}
                            \end{cases}
                            $$
                            where $\Delta_m$ is defined in~\eqref{eq:distances}.
\end{thm}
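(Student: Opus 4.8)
The plan is to start from Corollary~\ref{cor:critical_exp_alternative}, which reduces $\E(\ubeta)$ to the supremum of $\ind{w}$ over pairs $(v,w)\in\Bubeta$, and to evaluate it using the explicit generation of all bispecial factors. If $v$ is the longest power of $w$ that is still bispecial (so that, by Lemma~\ref{lem:BS_and_max_powers}, $wv$ is the maximal power of $w$), then
\[
    \ind{w} = \frac{|wv|}{|w|} = 1 + \frac{|v|}{|w|}.
\]
By Lemma~\ref{lem:generators_of_BS} and~\eqref{def_of_v_n}, every bispecial factor coming from a nonempty initial factor $0^{s}$ has the form $v^{(n)}=z^{(n)}\vpb^{n}(0^{s}c)(c\oplus n)^{-1}$, where $\vpb^{n}(0^{s})=\vpb^{n}(0)^{s}$ and the prefix $z^{(n)}$ is absent exactly in case $(a)$ (i.e.\ when $p\nmid a-b$). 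First I would dispose of case $(a)$, where $w=\vpb^{n}(0)$ is shortest by Lemma~\ref{lem:shortest_bez_zn}, and then concentrate on case $(b)$ with $r=0$, for which the natural candidate is $w=z^{(n)}\vpb^{n}(0)(z^{(n)})^{-1}$; since $z^{(n)}$ is a suffix of $\vpb^{n}(0)$ this candidate has length $|\vpb^{n}(0)|$.

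The heart of the argument is to prove that in case $(b)$ this $w$ is genuinely the shortest factor pairing with $v^{(n)}$. For this I would apply Lemma~\ref{lem:RWs_method} with $\tilde w=z^{(n)}$: it is enough to show that $w$ is a left return word of $z^{(n)}$, equivalently that the prefix $z^{(n)}$ of $v^{(n)}$ reoccurs for the first time at position $|w|$; the lemma then yields that $w$ is the root of $v^{(n)}$, hence shortest. Granting this, substituting the lengths gives
\[
    \ind{w} = 1 + s + \frac{|z^{(n)}| + |\vpb^{n}(c)| - 1}{|\vpb^{n}(0)|}.
\]
It remains to maximize over the admissible data. The term $1+s$ is largest for the longest initial factor $0^{t_1-1}$, forcing $s=t_1-1$ and producing the leading $t_1$; among the admissible nonzero right extensions $c$ of $0^{t_1-1}$ the choice $c=1$ maximizes $|\vpb^{n}(c)|$. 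I would verify the latter from the eigenvector relation $\triangle_{c}=\beta\triangle_{c-1}-t_{c}$ together with $\triangle_{c-1}\le\triangle_{0}=1$, which gives $\triangle_{c}\le\triangle_{1}$ whenever $t_{c}=t_1$, the borderline value $t_{c}=t_1-1$ being exactly what the hypotheses $t_1\ge4$, or $t_1=3$ with $p\nmid z$, are designed to exclude. Since case $(b)$ beats case $(a)$ by the extra summand $|z^{(n)}|$ in the numerator, collecting these choices produces the stated formula for $\E(\ubeta)$, provided one also checks that the cases $r>0$ (with the longer prefix $z^{(n+r)}$) never exceed it.

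I expect the main obstacle to be precisely this double minimality-and-optimality task: establishing the return-word property of $z^{(n)}$, which underlies the denominator $|\vpb^{n}(0)|$, and carrying out the casework that rules out competing bispecial factors (other $c$, the initial factors of type $(a)$, and the prefixes $z^{(n+r)}$) under the stated hypotheses. Both rest on tracking how $\vpb$ propagates runs of zeros and on the admissibility encoded by the Parry condition.

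For the ultimate critical exponent I would pass to the limit $n\to\infty$ in the index formula with $s=t_1-1$ and $c=1$. Writing $\vec\Delta=(\triangle_0,\ldots,\triangle_{m+p-1})^{T}$ for the right Perron eigenvector of the incidence matrix associated with $\beta$ and using $\triangle_0=1$, the asymptotics $|\vpb^{n}(c)|\sim K\triangle_{c}\beta^{n}$ give
\[
    \frac{|\vpb^{n}(1)|}{|\vpb^{n}(0)|}\longrightarrow \triangle_1=\beta-t_1,
\]
so that $t_1+\triangle_1$ collapses to $\beta$. The contribution of $z^{(n)}$ is read off from Definition~\ref{dfn:z_n}: when $p\nmid z$ one has $z^{(n)}=\vpb^{n-m}(0^{t}m)$ and hence $|z^{(n)}|/|\vpb^{n}(0)|\to\beta^{-m}(t+\triangle_m)$, whereas when $p\mid z$ the word $z^{(n)}$ is a concatenation of blocks $\vpb^{jm+k}(0^{t}m)$ whose lengths form a geometric progression of ratio $\beta^{m}$, yielding the limit $(t+\triangle_m)/(\beta^{m}-1)$. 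Adding these to $\beta$ gives the two-case formula for $\E^{*}(\ubeta)$. The final point to settle is that this limit of the distinguished family equals $\limsup_{n}\text{ind}_n$: the lower bound holds because the lengths $|\vpb^{n}(0)|$ actually occur, while the upper bound follows since, by Corollary~\ref{cor:critical_exp_alternative}, every factor of high index is among the bispecial factors already enumerated.
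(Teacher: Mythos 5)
Your treatment of the bispecial factors generated from the nonempty initial blocks $0^{s}$, $0<s<t_1$, tracks the paper's handling of what it calls type $(I)$: the root factor is $\vpb^{n}(0)$ or its conjugate $z^{(n)}\vpb^{n}(0)(z^{(n)})^{-1}$ (via Lemma~\ref{lem:RWs_method} with $\tilde w=z^{(n)}$), the index is $1+s+\tfrac{|z^{(n)}|+|\vpb^{n}(c)|-1}{|\vpb^{n}(0)|}$, and the optimal data are $s=t_1-1$, $c=1$; the limit computations for $\E^{*}(\ubeta)$ are also essentially the paper's. But there is a genuine gap, and it sits exactly where the theorem's hypothesis lives. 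By Lemma~\ref{lem:generators_of_BS} the initial factors include the \emph{empty} word $0^{0}$, and the bispecial factors that are $f^{n}$-images of $\e$ (the paper's type $(II)$, which also absorbs your deferred cases $r>0$, i.e.\ $p\mid a-b$ with $a\neq 0$) are not powers of any conjugate of $\vpb^{n}(0)$: for $n\geq m$ they begin with $z^{(n)}$ and can only be powers of left return words of $z^{(n)}$ of type $(B)$ or $(C)$. Your proposal never bounds the indices of these return words, so it establishes at best the value of $\E_{I}(\ubeta)$ and the inequality $\E(\ubeta)\geq\E_{I}(\ubeta)$, not the claimed equality.

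Relatedly, you misidentify what the hypothesis ``$t_1\geq 4$, or $t_1=3$ and $p\nmid z$'' is for. It has nothing to do with excluding a borderline right extension $c$ with $t_c=t_1-1$ in the type-$(I)$ maximization (the choice $c=1$ is forced by the Parry condition for every admissible $\beta$, and Proposition~\ref{pro:result_for_E_I} already holds for all $t_1\geq 2$). Its role is to dominate the type-$(II)$ contribution: the paper proves, via the classification of complete return words of $0^{t}m$ in Lemma~\ref{lem:RWs} and the index bound of Lemma~\ref{lem:proof_of_main_thm}, that every left return word of $z^{(n)}$ of type $(B)$ or $(C)$ has index less than $4$ in general, and less than $3$ when $p\nmid z$, whence $\E_{I\!I}(\ubeta)\leq 4$ (resp.\ $\leq 3$) by Corollary~\ref{cor:proof_of_main_thm}, while $\E_{I}(\ubeta)>t_1$. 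Only under the stated hypothesis does $\E(\ubeta)=\max\{\E_{I}(\ubeta),\E_{I\!I}(\ubeta)\}$ collapse to $\E_{I}(\ubeta)$; the paper even exhibits a $\beta$ with $t_1=2$ for which $\E(\ubeta)=\E_{I\!I}(\ubeta)$, so the missing analysis cannot be waved away. To complete your argument you would need to add this return-word classification and the resulting bound on $\E_{I\!I}(\ubeta)$.
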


\section{Proof of the main theorem}\label{proof}

First let us describe all return words of $z^{(n)}$ since they are
playing a crucial role in following sections.

\subsection{Return words of $z^{(n)}$}


We can distinguish three types of return words of $0^tm$. Denote
$X, Y \in \A$ nonzero letters such that $X0^{t_m}m$ and
$Y0^{t_{m+p}}m$ are factors of $\ubeta$; $Y$ is always unique and
$X$ is unique if $m > 1$. Let $v$ be a CRW of $0^tm$, then $v$ is
a suffix of exactly one of the following factors of $\ubeta$:
\begin{itemize}
    \item[(A)] $w_1X0^{t_m}m$,
    \item[(B)] $X0^{t_m}mw_2Y0^{t_{m+p}}m$,
    \item[(C)] $Y0^{t_{m+p}}mw_3Y0^{t_{m+p}}m$,
\end{itemize}
where $w_1$ is long enough and $w_2$ and $w_3$ do not contain
$0^tm$ as a factor. The following lemma is based on this
observation.
\begin{lem} \label{lem:RWs}
    Let $v$ be a CRW of $0^tm$, then $v$ satisfies exactly one of the
    following conditions:
    \begin{itemize}
        \item[(A)] $X0^{t_m}m$ is a suffix of $v$; in this case
        $\vpb^m(0)$ must be also a suffix of $v$,
        \item[(B)] $v$ is a suffix of the $\vpb^m$-image of $0wy$, where
        $y$ is a~letter of the form $sp, s \geq 1,$ and $w \in \A^*$ does not contain $0$ or any multiple of $p$,
        \item[(C)] $v$ is a suffix of $\vpb^p$-image of $mw'm$, where
        $w' \in \A^*$ and $\vpb^p(w')$ does not contain the factor $0^tm$ and $0^tmw'm$ is a~return word of $0^tm$, which is not of type $(A)$.
    \end{itemize}
\end{lem}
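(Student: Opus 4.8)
The plan is to prove the refined trichotomy by \emph{desubstitution}, exploiting that $m$ is the unique ``merge'' letter of $\vpb$: the only letters whose $\vpb$-image ends in $m$ are $m-1$, via $\vpb(m-1)=0^{t_m}m$, and $m+p-1$, via $\vpb(m+p-1)=0^{t_{m+p}}m$. Hence every occurrence of $0^t m$ in $\ubeta$ (with $t=\min\{t_m,t_{m+p}\}$) lies inside exactly one of these two blocks. I will call it \emph{$X$-type} when it comes from $\vpb(m-1)$ (so it is preceded by the full block $0^{t_m}$ and then by $X$) and \emph{$Y$-type} when it comes from $\vpb(m+p-1)$ (preceded by $0^{t_{m+p}}$ and then by $Y$); this is exactly the dichotomy defining $X,Y$, and uniqueness of $Y$ (and of $X$ for $m>1$) follows from the left extensions in Lemma~\ref{lem:two_letters_factors}. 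Read through this lens, a CRW of $0^t m$ is bounded by two consecutive occurrences, and the three patterns $(A)$, $(B)$, $(C)$ of the preliminary observation are precisely ``right boundary $X$-type'' $(A)$, versus ``right boundary $Y$-type with the preceding occurrence $X$-type'' $(B)$, versus ``right boundary $Y$-type with the preceding occurrence $Y$-type'' $(C)$. These are visibly exhaustive and mutually exclusive, so it remains to extract the asserted image structure in each case.

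For $(A)$ I would run the forced chain of preimages. Since $\vpb(a)$ ends in $a\oplus 1$ and, for $1\le k\le m-1$, the letter $k<m$ can only be produced by $k-1$, an $X$-type $m$ desubstitutes uniquely upwards along $m\leftarrow m-1\leftarrow\cdots\leftarrow 1\leftarrow 0$; equivalently $\vpb^k(0)$ ends in $k$ for every $k\le m$, so the terminal $0^{t_m}m$ of $v$ is the $\vpb^m$-image of a single letter $0$. A short induction shows $\vpb^k(0)$ uses only letters $\le k$, so $\vpb^m(0)$ contains the factor $0^t m$ only once, as its suffix; hence the preceding occurrence of $0^t m$ cannot lie strictly inside $\vpb^m(0)$, which is therefore forced to be a suffix of the CRW $v$. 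This is statement $(A)$.

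For $(B)$ and $(C)$ I would desubstitute the whole return word in one block, by $\vpb^m$ and by $\vpb^p$ respectively, the point being that an $X$-type $m$ is naturally a $\vpb^m$-image (of $0$, as above) while a $Y$-type $m$ is naturally a $\vpb^p$-image of $m$ (since $\vpb^p(m)$ ends in $\vpb(m+p-1)=0^{t_{m+p}}m$ along the cycle $m\to m+1\to\cdots\to m+p-1\to m$). In $(B)$ the left boundary forces the $\vpb^m$-framework, and a $Y$-type $m$ arises as $\vpb^m(y)$ precisely when $y\oplus(m-1)=m+p-1$; solving this with the definition of $\oplus$ gives exactly the positive multiples $y=sp$. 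Writing the $\vpb^m$-preimage of $v$ as $0wy$, the interior letters $w$ must avoid $0$ and every multiple of $p$, since $\vpb^m(c)$ ends in $m$ iff $c\oplus m=m$ iff $c\in\{0\}\cup\{sp\}$, and $\vpb^m(c)$ contains $m$ \emph{only} as this final letter (all intermediate spine letters are substituted away and the interposed $\vpb^{j}(0)$ blocks, $j<m$, use no $m$); thus ``$w$ avoids $0$ and multiples of $p$'' is equivalent to ``$\vpb^m(w)$ contains no $0^t m$'', i.e.\ to the two boundaries being consecutive. In $(C)$ both boundaries are $Y$-type, so $v$ is a suffix of $\vpb^p(mw'm)$, where requiring $\vpb^p(w')$ to avoid $0^t m$ again encodes consecutiveness, and $0^t mw'm$ is the corresponding return word one level of $\vpb^p$ down, necessarily not of type $(A)$ because its right boundary is again $Y$-type.

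The main obstacle I expect is not any individual case but the bookkeeping that makes the desubstitution genuinely forced: one must check that the $\vpb^m$- (resp.\ $\vpb^p$-) preimage of $v$ is determined up to its boundary, that the interior constraints on $w$ and $w'$ are \emph{equivalent} to (not merely sufficient for) the absence of an intermediate $0^t m$ (ruling out, in particular, any occurrence straddling two image blocks), and that the recursive clause in $(C)$ is consistent, i.e.\ that passing to the $\vpb^p$-preimage really preserves the $Y$-type/$Y$-type pattern. Matching the zero-block lengths $t_m$ and $t_{m+p}$ against their common part $t$ at each splice, and keeping the letter-valued $\oplus$ separate from its subscript variant, is the routine but error-prone heart of the verification.
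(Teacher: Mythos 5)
Your overall strategy is the paper's: classify occurrences of $0^tm$ by whether they arise as the suffix of $\vpb(m-1)$ (preceded by $X0^{t_m-t}$) or of $\vpb(m+p-1)$ (preceded by $Y0^{t_{m+p}-t}$), split the complete return words according to the types of their two boundary occurrences, and desubstitute by $\vpb^m$ in cases $(A)$, $(B)$ and by $\vpb^p$ in case $(C)$. Your treatment of $(A)$ and $(B)$ is correct and in fact supplies more of the bookkeeping (the computation of $y\oplus(m-1)=m+p-1$, the location of the letter $m$ inside $\vpb^m(c)$, the non-straddling of $0^tm$ across image blocks) than the paper writes out.

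The genuine gap is the last clause of $(C)$. You assert that $0^tmw'm$ is ``necessarily not of type $(A)$ because its right boundary is again $Y$-type,'' but this is precisely what has to be proved, and the reason you give does not establish it: knowing that the final $m$ of $v$ is $Y$-type only tells you that this occurrence is produced by $\vpb(m+p-1)$, i.e.\ it identifies the $\vpb^p$-preimage of $v$'s right boundary as the final $m$ of $mw'm$; it says nothing about whether \emph{that} occurrence of $m$ is itself $X$-type or $Y$-type, which is decided one further level of desubstitution down. There is no general principle that the type of a boundary occurrence is preserved when passing to the preimage, and you yourself list the ``consistency of the recursive clause'' as an unresolved obstacle. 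The paper closes this point by contradiction: if $0^tmw'm$ were of type $(A)$, then by the already-proved clause $(A)$ the word $\vpb^m(0)$ would be a suffix of $w'm$, so $w'$ would contain every letter $\le m-1$; consequently $\vpb^p(w')$ would contain every letter of $\A$, in particular an occurrence of $m$ and hence of $0^tm$, contradicting the requirement that $\vpb^p(w')$ avoid $0^tm$. You need this (or an equivalent) argument to finish the proof.
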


\pf

If $X0^{t_m}m$ is a suffix of $v$, then $\vpb^m(0)$ must be a
suffix of $v$ as well since $X0^{t_m}m$ appears in $\ubeta$ only
as a suffix of $\vpb^m(0)$ and $\vpb^m(0)$ contains $0^tm$ only at the end.

Let $v$ be of type $(B)$. Then since $X0^{t_m}m$ occurs only as a~suffix of $\varphi_\beta^m(0)$ and  $Y0^{t_{m+p}}m$ occurs only as a~suffix of $\varphi_\beta^m(sp)$ with $s \geq 1$, the factor $v$ must be a~suffix of $\varphi_\beta^m(0w(sp))$ for some $w \in \A^*$.
Since $v$ has to be a~return word of $0^tm$, the factor $w$ can contain neither $0$ nor any multiple of $p$.

Let $v$ be of type $(C)$. Then since $Y0^{t_{m+p}}m$ occurs only as a~suffix of $\varphi_\beta^p(m)$, the factor $v$ must be a~suffix of $\varphi_\beta^p(mw'm)$.
Since $v$ has to be a~return word of $0^tm$, the factor $\varphi_\beta^p(w')$ cannot contain $0^tm$.

The factor $0^tmw'm$ is obviously a~complete return word of $0^tm$. Assume it is of type $(A)$.
Then $\varphi_\beta^m(0)$ is a~suffix of $w'm$ and so $w'$ contains all letters $\leq m-1$. Consequently,
$\varphi_\beta^p(w')$ contains all letters of $\A$ and also the factor $0^tm$ - a~contradiction.

\pfk
\begin{cor}
    Let $v$ be a CRW of $z^{(n)}, n \geq m$, then $v$ is a suffix of $\vpb^{n-m}$-image of $0v'$ or
    $w'zv'$, where $v'$ is a CRW of $0^tm$ satisfying exactly one of the conditions $(A)$, $(B)$, and $(C)$, and $w' \in \A^*$
    is long enough so that $z^{(n)}$ is a suffix of $\vpb^{n-m}(w'z0^tm)$.
\end{cor}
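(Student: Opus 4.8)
The plan is to lift the classification of complete return words of $0^tm$ given in Lemma~\ref{lem:RWs} through the substitution $\vpb^{n-m}$. The starting point is the structural description of $z^{(n)}$ in Definition~\ref{dfn:z_n}: when $p$ does not divide $z$ we simply have $z^{(n)} = \vpb^{n-m}(0^tm)$, while when $p$ divides $z$ the word $z^{(n)}$ is the longer concatenation $\vpb^{k}(0^tm)\cdots\vpb^{(\ell-1)m+k}(0^tm)$, which is nevertheless a suffix of $\vpb^{n-m}(w'z0^tm)$ once $w'$ is chosen long enough. I would first isolate this fact, because it synchronises occurrences of $z^{(n)}$ with occurrences of $0^tm$: every copy of $z^{(n)}$ in $\ubeta$ ends in $\vpb^{n-m}(m)$, and since $\vpb$ is primitive and therefore recognisable, each such copy must terminate exactly at the right end of a $\vpb^{n-m}$-block that codes an occurrence of $0^tm$ at the preimage level.

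With this synchronisation in hand, I would transfer return words. Let $v$ be a complete return word of $z^{(n)}$, so that $v$ begins and ends with $z^{(n)}$ and contains exactly two occurrences of it. Desubstituting by $\vpb^{n-m}$, these two occurrences pull back to two consecutive occurrences of $0^tm$ in $\ubeta$, and the factor they delimit at the preimage level is a complete return word $v'$ of $0^tm$; by Lemma~\ref{lem:RWs} this $v'$ satisfies exactly one of the conditions (A), (B), (C). Applying $\vpb^{n-m}$ again recovers $v$ as a suffix of the $\vpb^{n-m}$-image of the preimage context surrounding $v'$.

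It remains to pin down that context on the left. By \eqref{t_and_z} the factor $0^tm$ admits only the two left extensions $0$ and $z$, so the opening occurrence of $0^tm$ in the preimage is preceded either by $0$ or by $z$. In the former case the image already starts with $z^{(n)}$, and $v$ is a suffix of $\vpb^{n-m}(0v')$. In the latter case --- which is exactly where the behaviour of $z^{(n)}$ under $p \mid z$ matters --- a single preceding letter $z$ need not regenerate the long word $z^{(n)}$ as a suffix after substitution, so one must retain the whole block $w'z$ with $w'$ long enough that $z^{(n)}$ is a suffix of $\vpb^{n-m}(w'z0^tm)$; then $v$ is a suffix of $\vpb^{n-m}(w'zv')$, as claimed.

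The delicate step is this last left-boundary bookkeeping when $p$ divides $z$. There $z^{(n)}$ reaches back through several $\vpb^{m}$-levels, and one has to check that an occurrence of this long word forces precisely the prefix $w'z$ before the opening $0^tm$, and that the $w'$ so produced is of the stated length. This rests on the periodicity supplied by $z$ being a multiple of $p$ together with the fact, already used in the proof of Lemma~\ref{lem:RWs}, that $0^tm$ occurs only as a suffix of the words $\vpb^{m+pk}(0)$, so that the left extension $z$ recurs consistently up the substitution levels. The right-hand boundary and the generic $0$-case are routine once the synchronisation of the first paragraph is established.
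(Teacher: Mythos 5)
Your argument follows the paper's own proof of this corollary: in both, a CRW of $z^{(n)}$ is desubstituted through $\vpb^{n-m}$ to a CRW of $0^tm$ (classified by Lemma~\ref{lem:RWs}), and the left boundary is controlled via the two left extensions $0$ and $z$ of $0^tm$, with the padding word $w'$ needed exactly in the case $p \mid z$ because $z^{(n)}$ is then a suffix of $\vpb^{n-m}(00^tm)$ but not necessarily of $\vpb^{n-m}(z0^tm)$. Your write-up is in fact more explicit than the paper's brief sketch (you name the synchronisation step and flag the delicate left-boundary bookkeeping); the only small imprecision is the phrase ``primitive and therefore recognisable'' --- recognisability also requires aperiodicity of the fixed point, which holds here --- and this does not affect the argument.
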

\pf

If $z$ is not a multiple of $p$, then CRWs of $z^{(n)}, n > m,$
are exactly the $\vpb^{n-m}$-images of the CRWs of $0^tm$.
Otherwise, we need to extend these $\vpb^{n-m}$-images to the left
so that they contain the complete factor $z^{(n)}$. Due to the properties of
$\vpb$, $z^{(n)}$ is always a proper suffix of
$\vpb^{n-m}(00^tm)$. But this might not be true for
$\vpb^{n-m}(z0^tm)$, therefore we must consider a long enough
prolongation by a factor $w'$.

\pfk

\begin{exa}
Let us illustrate the situation where a CRW of $z^{(n)}$ is a~suffix of $\vpb^{n-m}(w'z0^tm)$, however $z^{(n)}$ is not a~suffix of $\vpb^{n-m}(z0^tm)$.
In fact, such a~situation occurs whenever $\vpb^m(z)=0^{t_{m+p}}m$, where $t_{m+p}<t_m$ and $z$ is a~multiple of $p$.
Let $d_\beta(1) = 21^\omega$. The substitution $\vpb$
    then reads: $0 \mapsto 001, 1 \mapsto 01$ and we have $m = 1, p = 1, t = 1,$
    and $0^tm = 01$.
    It is easy to calculate $z^{(3)}=01\vpb(01)\vpb^2(01)$ and $\vpb^{3-m}(z0^tm)=\vpb^2(101)=\vpb(01)\vpb^2(01)$.
\end{exa}
\begin{dfn}
    In the terms of the previous lemma and its corollary, we distinguish three types
    of LRWs, RRWs, and CRWs of $z^{(n)}, n \geq m$: type $(A)$, type $(B)$, and type $(C)$.
\end{dfn}
\begin{exa}
    Let $d_\beta(1) = 221(12)^\omega$. The substitution $\vpb$
    then reads: $0 \mapsto 001, 1 \mapsto 002, 2 \mapsto 03, 3
    \mapsto 04, 4 \mapsto 003$ and we have $m = 3, p = 2, t = 1,$
    and $0^tm = 03$.

    Any CRW of $03$ of type $(A)$ ends in $\vpb^3(0) =
    00100100200100100200100103$, there are three such CRWs:
    \begin{eqnarray*}
       & & 0300100100200100100200100103, \\
       & & 030010010020010010400100100200100100200100103, \\
       & & 030010010020010010020010400100100200100100200100103.
    \end{eqnarray*}

    There are two multiples of $p = 2$ less than $m+p = 5$; we have to consider $y=sp, s = 1,2,$ to get all
    CRWs of type $(B)$. Since $t_{2p} = t_4 > 0$ and $t_p = t_2 > 0$, the factor $w$ from Lemma~\ref{lem:RWs} item $(B)$ is empty.
    Thus $\vpb^m(0p) = \vpb^3(02)$ and $\vpb^m(04) = \vpb^3(04)$ are
    the only sources of CRWs of type $(B)$. We get two CRWs:
    \begin{eqnarray*}
       s = 1 & \rightarrow & 03001001002001003, \\
       s = 2 & \rightarrow & 03001001002001001002001003.
    \end{eqnarray*}

    Having all CRWs of type $(B)$, we can see there are no CRWs of
    type $(C)$ for both $\vpb^2(00100100200100)$ and
    $\vpb^2(00100100200100100200100)$ contain $03$ as a factor.
\end{exa}

\begin{exa}
    Let $d_\beta(1) = 2000(1)^\omega$. The substitution $\vpb$
    then reads: $0 \mapsto 001, 1 \mapsto 2, 2 \mapsto 3, 3
    \mapsto 4, 4 \mapsto 04$ and we have $m = 4, p = 1, t = 0,$
    and $0^tm = 4$.

    There is only one CRW of type $(A)$:
    $40010012001001230010012001001234$.

    As for type $(B)$, there are four multiples of $p = 1$ less than
    $m+p = 5$: $y = sp, s = 1,2,3,4$. But for $s \geq 2$, the only factor
    of the form $0wy$ is $012\cdots y$, i.e., not
    admissible as it contains $p = 1$ between $0$ and $y$. Hence,
    we have only one CRW of type $(B)$: the suffix $404$ of $\vpb^4(01)$.

    For this $\beta$, there exist CRWs of type $(C)$. Take $404$,
    then $w' = 0$ (see Lemma~\ref{lem:RWs} item $(C)$) and $\vpb^p(0) = 001$ does
    not contain $4$. Hence, we get a CRW of type $(C)$: $400104$.
    Now take this factor and again apply $\vpb^p = \vpb$, this
    yields another CRW of type $(C)$: $4001001200104$. Doing the
    same again, we get the third CRW of type $(C)$:
    $4001001200100123001001200104$. And this is the last one since
    the word $\vpb(00100120010012300100120010)$ does contain $4$.
\end{exa}

\subsection{Bispecial factors of type $(I)$}

Now let $u^{(n)}$ be the $f^n$-image of an $(a-c,b-d)$-bispecial
factor $0^s$, where $p$ does not divide $a - b$, $0 < s < t_1$,
and $t_{c \oplus 1} t_{c  \oplus 2}\cdots \preceq t_{d \oplus 1}
t_{d \oplus 2}\cdots$. Then we have by
Lemma~\ref{lem:BS_generator}
$$
    u^{(n)} = \vpb^n(0^s)\vpb^n(c)(c \oplus n)^{-1}.
$$
\begin{lem} \label{lem:shortest_bez_zn}
    The root factor of $u^{(n)}$ is $\vpb^n(0)$.
\end{lem}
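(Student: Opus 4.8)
The plan is to prove the statement by induction on $n$, and to split the assertion ``the root factor is $\vpb^n(0)$'' into the two halves it really contains: first that $u^{(n)}$ is a power of $\vpb^n(0)$ (so $\vpb^n(0)$ is a period), and second that no strictly shorter word is a period (so $\vpb^n(0)$ is the \emph{shortest} period, i.e. the root). The base case $n=0$ is immediate, since $u^{(0)}=0^{s}$ has root $0=\vpb^{0}(0)$. The first step of the induction is to record the recursion $u^{(n)}=\vpb(u^{(n-1)})\,0^{e}$, where $e=\min\{t_{c\oplus n},t_{d\oplus n}\}=t_{c\oplus n}$. This follows from Definition~\ref{map_BS}: because $p$ does not divide $a-b$, the same holds for every pair of left labels obtained by iterating $\oplus 1$ (this residue is invariant), so the pair is never $\{m-1,m+p-1\}$ and hence $f_{L}=\e$ at every stage; the ordering hypothesis $t_{c\oplus 1}t_{c\oplus 2}\cdots\preceq t_{d\oplus 1}t_{d\oplus 2}\cdots$ then makes the right contribution equal to $0^{t_{c\oplus n}}$.

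The periodicity half is then easy. By the induction hypothesis $u^{(n-1)}$ is a prefix of $\vpb^{n-1}(0)^{\omega}$, so applying the morphism $\vpb$ and using $\vpb(\vpb^{n-1}(0))=\vpb^{n}(0)$ shows that $\vpb(u^{(n-1)})$ is a prefix of $\vpb^{n}(0)^{\omega}$. It remains to check that appending $0^{t_{c\oplus n}}$ does not break this, which is exactly the statement that $\vpb^{n}(c)(c\oplus n)^{-1}$ is a prefix of $\vpb^{n}(0)^{\omega}$; this is the longest-common-prefix computation of~\cite{Klouda2009} already used for $u_{2}$ in Lemma~\ref{lem:BS_generator}. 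Equivalently, it is the remark (made for case $(a)$ preceding the main theorem) that $(u^{(n)},\vpb^{n}(0))\in\Bubeta$ is obvious. Thus $\vpb^{n}(0)$ is a period of $u^{(n)}$.

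The crux is minimality, and here I would invoke Lemma~\ref{lem:RWs_method}: since $u^{(n)}$ is a power of $\vpb^{n}(0)$, it suffices to exhibit a prefix $\tilde w$ of $u^{(n)}$ whose left return word is exactly $\vpb^{n}(0)$. I would carry this through the induction in parallel with the periodicity: if $\vpb^{n-1}(0)$ is the left return word of a synchronizing prefix $\tilde w_{n-1}$ of $u^{(n-1)}$, then the image $\vpb(\tilde w_{n-1})$, completed to a suitable prefix $\tilde w_{n}$ of $u^{(n)}$, should have $\vpb^{n}(0)$ as its left return word. Concretely, one pushes the relation $\tilde w_{n-1}v_{R}=\vpb^{n-1}(0)\,\tilde w_{n-1}$ through $\vpb$ to get $\vpb(\tilde w_{n-1})\vpb(v_{R})=\vpb^{n}(0)\,\vpb(\tilde w_{n-1})$, and then uses recognizability of the primitive substitution $\vpb$ to argue that the only occurrences of $\tilde w_{n}$ in this word sit on the $\vpb$-block grid, so there are exactly two of them. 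Lemma~\ref{lem:RWs_method} then yields that $\vpb^{n}(0)$ is the root of $u^{(n)}$.

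I expect the recognizability bookkeeping in this last step to be the main obstacle, namely guaranteeing that $\tilde w_{n}$ has \emph{exactly} two occurrences in $\vpb^{n}(0)\,\tilde w_{n}$ and that no spurious occurrence appears off the block grid. For small $n$ there is a convenient shortcut, since the last letter $0\oplus n$ of $\vpb^{n}(0)$ occurs only once in $\vpb^{n}(0)$ (the triangular--cyclic shape of $\vpb$ in~\eqref{subst_NS} produces it only at the end of the block $\vpb^{n-1}(1)$ inside $\vpb^{n}(0)=\vpb^{n-1}(0)^{t_{1}}\vpb^{n-1}(1)$), so a prefix reaching this letter can reoccur only at the period $|\vpb^{n}(0)|$. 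This shortcut is exactly what fails in the non-simple regime once $0\oplus n$ wraps via $\oplus$ and the last letter recurs in the interior; securing the synchronization uniformly in $n$, and choosing $\tilde w$ correctly in the degenerate cases where $\vpb^{n}(c)$ (hence $u_{2}$) is very short, is where the real work lies. Once the first left return is pinned to $\vpb^{n}(0)$, ruling out any shorter period is automatic.
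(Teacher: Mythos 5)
Your decomposition into ``\,$\vpb^n(0)$ is a period'' plus ``no shorter period'' is sound, the recursion $u^{(n)}=\vpb(u^{(n-1)})0^{t_{c\oplus n}}$ is correct (and your observation that $p\nmid a-b$ is preserved under $\oplus 1$, so $f_L=\e$ at every stage, is exactly the argument in the proof of Lemma~\ref{lem:BS_generator}), and the periodicity half goes through as you say. But the minimality half --- which is the entire content of the lemma --- is left as a plan contingent on a synchronization argument you explicitly defer (``where the real work lies''). You never exhibit a prefix $\tilde w_n$ of $u^{(n)}$ for which $\vpb^n(0)$ is provably a left return word, nor do you show how to rule out occurrences of $\tilde w_n$ off the block grid. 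As submitted, the proof is therefore incomplete at its crucial step.

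It is worth noting that the paper avoids the return-word machinery here entirely (Lemma~\ref{lem:RWs_method} is reserved for the harder factors $v^{(n)}$ beginning in $z^{(n)}$). Instead it desubstitutes the hypothetical shorter root directly: if $u^{(n)}=\bar w^{\ell}\bar w'$ with $|\bar w|<|\vpb^n(0)|$, then $\bar w$ must begin in $0^{t_1}1$ (it is a prefix of $u^{(n)}$ longer than $t_1$, since $u^{(n)}$ is not a power of $0$) and can be taken to end in a nonzero letter; by the shape of $\vpb$ in~\eqref{subst_NS} every nonzero letter marks the end of a $\vpb$-block and $0^{t_1}1$ pins the start to a block boundary, so $\bar w$ has a unique $\vpb$-preimage, which is then a period of $u^{(n-1)}$ shorter than $\vpb^{n-1}(0)$ --- contradicting the induction hypothesis. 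This is a strictly local recognizability fact about a single factor, much easier to verify than the uniform two-occurrence count your return-word route requires, and it is exactly the ingredient your sketch is missing. If you want to salvage your approach, you should either supply that counting argument in full or switch to desubstituting the root itself.
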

\pf


By induction on $n$. The case of $n = 0$ is trivial. For a~greater
$n$, the statement is a direct consequence of the simple fact that
any factor starting in $0^{t_1}1$ and ending in a~nonzero digit has
a~unique $\vpb$-preimage: Assume that $u^{(n)} = \bar{w}^\ell
\bar{w}'$, where $\bar{w}$ is shorter than $\vpb^n(0)$ and $\bar{w}'$ is
a proper prefix of $\bar{w}$. Then $\bar{w}$ must begin in
$0^{t_1}1$ and end in a nonzero letter. Hence, there exists a
unique $\vpb$-preimage of $\bar{w}$ which is shorter than
$\vpb^{n-1}(0)$ and such that $u^{(n-1)}$ is a power of it. A
contradiction.

\pfk We will prove in the sequel that $\vpb^n(0)u^{(n)}$ is a~factor of $\ubeta$. Hence, we will have shown that $(u^{(n)},\vpb^n(0)) \in \B(\ubeta)$.

Now, let us look at the case when $a = 0$ and $b$ is a~multiple of $p$. This assumption means that the $f^m$-image
begins in $0^tm$. Then the $f^n$-image of $(a-c,b-d)$-bispecial
factor $0^s$ reads
$$
    v^{(n)} = z^{(n)}\vpb^n(0^s)\vpb^n(c)(c \oplus n)^{-1}.
$$

For $n < m$, $z^{(n)}$ is empty and hence $v^{(n)}$ equals
$u^{(n)}$ defined above. For $n \geq m$, by Lemma~\ref{lem:RWs_method}, any $w$ such that
$(v^{(n)},w) \in \Bubeta$ must be a LRW of $z^{(n)}$ and since
$z^{(n)}$ is followed by $\vpb^n(0)$, the shortest such $w$ must
be a~LRW of type $(A)$, namely $z^{(n)}\vpb^n(0)(z^{(n)})^{-1}$,
a~conjugate of $\vpb^n(0)$.

Let us summarize what we know so far: if a BS factor, which is an $f^n$-image of some
initial block of zeros, begins in $\vpb^n(0)$, then $\vpb^n(0)$ must be
its root factor, if it begins in $z^{(n)}\vpb^n(0^s)$, the root
factor is $z^{(n)}\vpb^n(0)(z^{(n)})^{-1}$. Since both of these
root factors are of the same length, the greatest index is
attained by the longest one of such BS factors.
Altogether, we have found the root factors of a~significant subset of
all BS factors; these BS factors will be defined as of type $(I)$.
\begin{dfn}
    If $v$ is the $f^n$-image of an $(a-c,b-d)$-bispecial
    factor $0^s$, where $0 < s < t_1$ and either $p$ does not divide $a - b$
    or $a = 0$ and $b$ is a~multiple of $p$, then $v$ is said to be of type $(I)$. If
    $v$ is not of type $(I)$, it is of type $(II)$.
\end{dfn}
The complicated definition of type $(I)$ can be reformulated:
either $p$ does not divide $a-b$ or the $f^m$-image of $0^s$ begins
in $0^tm$. This is not satisfied, e.g., for
$(1-c,(p+1)-d)$-bispecial factors for their $f^{m-1}$-image already
begins in $0^tm$. We will study the BS factors of type $(II)$ in the next
section. In fact, the following holds: the root factors of $(0-c, kp-d)$ BS factors
of type $(I)$ are the LRWs of $z^{(n)}$ of type $(A)$, for a~BS factor
$v$ of type $(II)$ even the LRWs of $z^{(n)}$ of type $(B)$ or $(C)$ can form together with $v$ a~pair from $\Bubeta$.

\begin{dfn}
    We define
    $$
        \E_I(\ubeta) = \sup\{\ind{w}\mid (v,w) \in \Bubeta,\ v \text{of type }(I)\}.
    $$
\end{dfn}

In order to identify the longest BS factors of type $(I)$, we will use some technical results.
We know that the longest common prefix of $\vpb^{n}(c)$ and
$\vpb^{n}(d)$ equals $\vpb^{n}(c)(c \oplus n)^{-1}$, where $t_{c
\oplus 1} t_{c \oplus 2}\cdots \preceq t_{d \oplus 1} t_{d \oplus
2}\cdots$. Due to the Parry condition
$$
t_{c \oplus 1} t_{c  \oplus 2}\cdots \preceq t_{1} t_{2}\cdots
\quad \text{ for all $c \in \A\setminus\{0\},$}
$$
we get that the longest common prefix of $\vpb^{n}(c)$ and
$\vpb^{n}(0)$ equals $\vpb^{n}(c)(c \oplus n)^{-1}$ for all
nonzero $c$. Therefore, to get the longest $u^{(n)}$ and
$v^{(n)}$, the initial factor must be an $(a-c,b-d)$-bispecial factor
$0^{t_1 - 1}$. In order for the condition $(B2)$ from Definition~\ref{dfn:set_B} to be satisfied for
this initial factor and its $f^n$-images, 0 must be one of the
left extension, say $a = 0$. This implies that $0^{t_1}c$ must be
a factor of $\ubeta$; this is always true for $c = 1$. To get the
longest $f^n$-images, the choice $d = 0$ is the best possible and $c$
has to be chosen so that $t_{c \oplus 1} t_{c \oplus 2} \cdots$ is the
greatest possible with respect to the lexicographical order. But since
either $t_c = t_1$ or $t_{m+p} = t_1$ and $c = m$, the
lexicographical maximum is obtained by $c = 1$ due to the Parry
condition. So, the $a,c,$ and $d$ are fixed, $b$ is to be chosen so
that the $f^m$-image of $(0-1,b-0)$ begins in $0^tm$. This always
happens for $b = p$, but any multiple of $p$ will do the same job.

The above explanation implies that if we prove validity of the condition $(B2)$ from Definition~\ref{dfn:set_B},
we will have the right to replace in the definition of $\E_I(\ubeta)$ ``$v$ of type $(I)$'' by ``$v$ is the $f^n$-image of
the $(0-1,p-0)$-bispecial factor $0^{t_1-1}$''.
\begin{lem}
    $z^{(n)}\vpb^n(0^{t_1}1)$ is always a factor of $\ubeta$.
\end{lem}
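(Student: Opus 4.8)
The plan is to rephrase the goal and reduce it to a single suffix property. Since $\vpb(0)=0^{t_1}1$ we have $\vpb^n(0^{t_1}1)=\vpb^{n+1}(0)$, so the statement is equivalent to $z^{(n)}\vpb^{n+1}(0)\in\L(\ubeta)$. The first and main reduction is the observation that \emph{it suffices to exhibit an $N\ge n+1$ for which $z^{(n)}$ is a suffix of $\vpb^N(0)$.} Indeed, as $t_1\ge 2$ the word $00$ is a prefix of $\ubeta$, so $\vpb^N(00)=\vpb^N(0)\vpb^N(0)$ is a prefix of $\ubeta=\vpb^N(\ubeta)$; then $z^{(n)}$ ends the first copy of $\vpb^N(0)$ and $\vpb^{n+1}(0)$ is a prefix of the second copy (because $\vpb^{n+1}(0)$ is a prefix of $\vpb^N(0)$ whenever $N\ge n+1$), so the block $z^{(n)}\vpb^{n+1}(0)$ occurs. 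For $n<m$ we have $z^{(n)}=\e$ and the claim is trivial, since $\vpb^{n+1}(0)$ is a prefix of $\ubeta$. Hence I may assume $n\ge m$, and it remains to prove that $z^{(n)}$ is a suffix of $\vpb^{n+p}(0)$ (here $n+p\ge n+1$).

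The single ingredient I would use repeatedly is that $0^tm$ is a suffix of $\vpb^{m+pk}(0)$ for every $k\ge 0$; in particular $0^tm$ is a suffix of $\vpb^{m+p}(0)$. When $z$ is \emph{not} a multiple of $p$, Definition~\ref{dfn:z_n} gives $z^{(n)}=\vpb^{n-m}(0^tm)$, and applying $\vpb^{n-m}$ to the fact that $0^tm$ is a suffix of $\vpb^{m+p}(0)$ shows that $z^{(n)}$ is a suffix of $\vpb^{n-m}(\vpb^{m+p}(0))=\vpb^{n+p}(0)$. This settles the easy case without any further work.

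When $p$ divides $z$ I would instead start from the already established fact that $z^{(n)}$ is a suffix of $\vpb^n(0)$. Writing $\vpb^{n+p}(0)=\vpb^p(\vpb^n(0))$, this word ends in $\vpb^p(z^{(n)})$, so it is enough to prove the self-reproduction property that $z^{(n)}$ is a suffix of $\vpb^p(z^{(n)})$. To organise this, set $s=0\oplus n$; since $n\ge m$ we have $s\in\{m,\dots,m+p-1\}$ and $\vpb^p(s)$ ends in $s$, so the words $\vpb^{jp}(s)$ form a nested family of suffixes converging to the unique left-infinite word $V$ with $\vpb^p(V)=V$ that ends in $s$. If $z^{(n)}$ is a suffix of $V$, then $\vpb^p(z^{(n)})$ is a suffix of $\vpb^p(V)=V$ as well; as $z^{(n)}$ and $\vpb^p(z^{(n)})$ are then both suffixes of $V$ with $|z^{(n)}|\le|\vpb^p(z^{(n)})|$, the shorter is a suffix of the longer, which is exactly what we want. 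Thus everything comes down to showing that $z^{(n)}$ is a suffix of $V$, and this I would attack by induction on $n$ in steps of $m$, using the recurrence $z^{(n)}=z^{(n-m)}\vpb^{n-m}(0^tm)$ (a direct consequence of Definition~\ref{dfn:z_n}), the relation $\vpb^m(V')=V$, where $V'$ is the analogous left-infinite fixed point ending in $0\oplus(n-m)$, and the fact that $\vpb^p(0^tm)$ again ends in $0^tm$.

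The two routine cases ($n<m$ and $p\nmid z$) are immediate, and the reduction in the first paragraph is the conceptual core that makes the problem tractable. The step I expect to be the real obstacle is the self-reproduction property in the case $p\mid z$, i.e. verifying that the comparatively short suffix $z^{(n)}$ of $\vpb^n(0)$ already lies within the stabilised self-similar tail $V$ and therefore survives a further application of $\vpb^p$. The delicacy is entirely one of suffix alignment across two different scales: the recurrence for $z^{(n)}$ advances the exponent by $m$, whereas the self-similarity of $V$ advances it by $p$, so the induction must track precisely how the overhang created by $\vpb^p(0^tm)$ beyond its trailing $0^tm$ is reabsorbed into the preceding factor $z^{(n-m)}$. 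Getting this bookkeeping exactly right---rather than merely up to lengths, where a naive estimate falls short by $|z^{(n-m)}|$---is the crux of the proof.
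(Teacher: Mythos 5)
Your opening reduction is correct and genuinely different from the paper's argument: since $\vpb^n(0^{t_1}1)=\vpb^{n+1}(0)$ and $00\in\L(\ubeta)$ (as $t_1\geq 2$), it indeed suffices to find $N\geq n+1$ with $z^{(n)}$ a suffix of $\vpb^N(0)$, and your treatment of $n<m$ and of $p\nmid z$ (where $z^{(n)}=\vpb^{n-m}(0^tm)$ and $0^tm$ is a suffix of $\vpb^{m+p}(0)$, so $N=n+p$ works) is complete. The paper instead takes the factor $0wp$ from Lemma~\ref{lem:RWs}$(B)$, for which it has already established that $z^{(n)}$ is a suffix of $\vpb^{n}(0wp)$, and appends $0^{t_1}1$ using $0wp\,0^{t_1}1\in\L(\ubeta)$; that route handles all cases uniformly and avoids any suffix bookkeeping inside $z^{(n)}$.

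The gap is in the case $p\mid z$, the only case where $z^{(n)}$ is not simply $\vpb^{n-m}(0^tm)$ and hence the only case with real content. There you reduce everything to the self-reproduction property that $z^{(n)}$ is a suffix of $\vpb^p(z^{(n)})$, equivalently that $z^{(n)}$ is a suffix of the left-infinite $\vpb^p$-fixed tail $V$, and you leave exactly this step as a sketch, acknowledging that the ``bookkeeping'' of how the overhang $\vpb^p(0^tm)(0^tm)^{-1}$ is reabsorbed into $z^{(n-m)}$ remains to be done. This is not a routine verification. If $t_{m+p}>t_m$ one can still rescue your scheme cheaply, because then $00^tm$ is a suffix of $\vpb^{m+p}(0)$ and $z^{(n)}$ is known to be a proper suffix of $\vpb^{n-m}(00^tm)$. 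But if $t_{m+p}<t_m$ (and $p\mid z$), then $\vpb^{m+p}(0)$ ends in $Y0^tm$ with $Y\neq 0$, and one must prove that $z^{(n-m)}$ is a suffix of the $\vpb^{n-m}$-image of a word ending in the nonzero letter $Y$ rather than in $0$ --- precisely the misalignment that the paper's example with $\mathrm{d}_\beta(1)=21^\omega$ is there to warn about ($z^{(n)}$ is \emph{not} a suffix of $\vpb^{n-m}(z0^tm)$ in that situation). Until this subcase is actually carried out, the lemma remains unproved for $p\mid z$.
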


\pf

For $n < m$ the statement is trivial, let $n \geq m$. We have
already proved that the factor $\vpb^n(0wp)$ contains $z^{(n)}$ as
a suffix, here $w$ is the same $w$ as in Lemma~\ref{lem:RWs}. The
proof then follows from the fact that $0wp0^{t_1}1$ is a factor of
$\ubeta$.

\pfk
\begin{dfn}
    For all $n \in \N$ denote the factor $z^{(n)}\vpb^{n}(0)(z^{(n)})^{-1}$ by
    $w_I^{(n)}$ and the $f^n$-image of the $(0-1,p-0)$-bispecial factor $0^{t_1-1}$ by $v_I^{(n)}$.
\end{dfn}
\begin{pro}\label{pro:result_for_E_I} Let $t_1 \geq 2$. Then
    $$
        \E_I(\ubeta) = \sup\{\ind{w_I^{(n)}} \mid n \in \N \},
    $$
and hence
    $$
        \E_I(\ubeta) = \sup_{n \in \N} \left\{t_1 + \frac{|z^{(n)}| + |\vpb^n(1)| -
        1}{|\vpb^n(0)|}\right\}.
    $$
\end{pro}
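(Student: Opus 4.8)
The plan is to show that the family $\{(v_I^{(n)},w_I^{(n)})\}_{n\in\N}$ alone realizes the supremum defining $\E_I(\ubeta)$, and then to compute $\ind{w_I^{(n)}}$ via the maximal power of $w_I^{(n)}$. First I would check that $(v_I^{(n)},w_I^{(n)})\in\Bubeta$. Condition (B1) is immediate: $v_I^{(n)}$ is an $f^n$-image of the bispecial factor $0^{t_1-1}$ (nonempty since $t_1\geq 2$), hence bispecial. For (B2), Lemma~\ref{lem:BS_generator} and~\eqref{def_of_v_n} give $v_I^{(n)}=z^{(n)}\vpb^n(0^{t_1-1}1)(1\oplus n)^{-1}$; because $z^{(n)}$ is a suffix of $\vpb^n(0)$ we have $w_I^{(n)}z^{(n)}=z^{(n)}\vpb^n(0)$, so
\[
  w_I^{(n)}v_I^{(n)}=z^{(n)}\vpb^n(0^{t_1}1)(1\oplus n)^{-1}.
\]
The preceding lemma states that $z^{(n)}\vpb^n(0^{t_1}1)$ is a factor of $\ubeta$; hence so is its prefix $w_I^{(n)}v_I^{(n)}$, and since $v_I^{(n)}$ is a power of its root $w_I^{(n)}$ (Lemma~\ref{lem:RWs_method}), prepending one more copy of $w_I^{(n)}$ keeps it a power of $w_I^{(n)}$. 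This is exactly (B2).

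For the first equality I would appeal to the discussion preceding the statement, which shows that every BS factor of type $(I)$ has root factor of length $|\vpb^n(0)|$ for some $n$ — either $\vpb^n(0)$ or its conjugate $w_I^{(n)}$ — and that among all type $(I)$ BS factors whose root has a fixed such length the longest is exactly $v_I^{(n)}$, with root $w_I^{(n)}$. Applying Corollary~\ref{cor:critical_exp_alternative} to the type $(I)$ subclass, and using for each BS factor its shortest companion $w$ (its root), the supremum defining $\E_I(\ubeta)$ is unchanged if taken only over the subfamily $\{(v_I^{(n)},w_I^{(n)})\}_n$; this yields $\E_I(\ubeta)=\sup\{\ind{w_I^{(n)}}\mid n\in\N\}$.

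It remains to evaluate $\ind{w_I^{(n)}}$, i.e.\ to locate the maximal power of $w_I^{(n)}$. The inequality $\ind{w_I^{(n)}}\geq|w_I^{(n)}v_I^{(n)}|/|w_I^{(n)}|$ is immediate from the first paragraph. For the reverse inequality I would show that $w_I^{(n)}v_I^{(n)}$ is in fact the maximal power. Here $\ind{w_I^{(n)}}\geq t_1\geq 2>1$, and $w_I^{(n)}$ is (by the preceding discussion) the conjugate of $\vpb^n(0)$ of maximal index, so Lemma~\ref{lem:BS_and_max_powers} applies to $w=w_I^{(n)}$; by part (ii), deleting the first copy of $w_I^{(n)}$ from the maximal power leaves a BS factor that is a power of $w_I^{(n)}$, so it suffices to prove that $v_I^{(n)}$ is the longest BS factor which is a power of $w_I^{(n)}$. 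Since $w_I^{(n)}$ is primitive (a conjugate of the primitive word $\vpb^n(0)$), every BS power of $w_I^{(n)}$ has $w_I^{(n)}$ for its root, and by the classification of BS factors according to their roots the only BS factors with root $w_I^{(n)}$ are the type $(I)$ factors attached to $w_I^{(n)}$ — type $(II)$ factors pair with the type $(B)$ or $(C)$ return words of $z^{(n)}$, not with the type $(A)$ word $w_I^{(n)}$ — whose longest member is $v_I^{(n)}$. Substituting $|w_I^{(n)}|=|\vpb^n(0)|$ and $|w_I^{(n)}v_I^{(n)}|=|z^{(n)}|+t_1|\vpb^n(0)|+|\vpb^n(1)|-1$ into $|w_I^{(n)}v_I^{(n)}|/|w_I^{(n)}|$ then gives the asserted value of $\ind{w_I^{(n)}}$. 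I expect this reverse inequality — certifying that no (a priori type $(II)$) BS factor yields a longer power of $w_I^{(n)}$, equivalently that $w_I^{(n)}$ is the maximal-index conjugate of $\vpb^n(0)$ — to be the main obstacle, handled through the primitivity of $w_I^{(n)}$ together with its role as the type $(A)$ return word of $z^{(n)}$.
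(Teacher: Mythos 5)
Your proposal is correct and follows essentially the same route as the paper: the paper's own proof is a one-line remark (``$w_I^{(n)}$ is a conjugate of $\vpb^n(0)$, hence $|w_I^{(n)}|=|\vpb^n(0)|$'') that defers entirely to the preceding discussion, and your argument simply reassembles that discussion --- membership of $(v_I^{(n)},w_I^{(n)})$ in $\Bubeta$ via the lemma that $z^{(n)}\vpb^n(0^{t_1}1)\in\L(\ubeta)$, reduction to the family $\{(v_I^{(n)},w_I^{(n)})\}_n$ because all type~$(I)$ roots have length $|\vpb^n(0)|$ and $v_I^{(n)}$ is the longest such bispecial factor, and the length computation $|w_I^{(n)}v_I^{(n)}|/|w_I^{(n)}|$. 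Your extra paragraph verifying that $w_I^{(n)}v_I^{(n)}$ is actually the maximal power (via Lemma~\ref{lem:BS_and_max_powers}(ii) and primitivity of $w_I^{(n)}$) makes explicit a step the paper leaves implicit, and is a welcome addition rather than a deviation.
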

\begin{proof}

The proof follows from the simple fact that $w_I^{(n)}$ is a
conjugate of $\vpb^{n}(0)$ and so $|w_I^{(n)}| = |\vpb^{n}(0)|$.

\end{proof}

In~\cite{Klouda2009} we proved that the only $\beta$ for which
$\ubeta$ has an affine factor complexity is the one with $d_\beta(1) =
t_1(0\cdots0(t_1-1))^\omega$. For these special $\beta$s, it is
easy to simplify the formula for $\E_I(\ubeta)$. By induction, one
can prove that $\vpb^n(0)(z^{(n)})^{-1} = 0$, i.e., $|z^{(n)}| =
|\vpb^n(0)| - 1$.
\begin{cor}\label{cor:E_I_sup}
If $d_\beta(1) = t_1(0\cdots0(t_1-1))^\omega$, then
$$
    \E_I(\ubeta) = \sup_{n \in \N} \left\{t_1 + 1 + \frac{|\vpb^{n}(1)| - 2}{|\vpb^n(0)|}\right\}.
$$
\end{cor}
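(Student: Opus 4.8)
The plan is to reduce the claim to the formula already established in Proposition~\ref{pro:result_for_E_I} by proving the stated length identity $|z^{(n)}| = |\vpb^n(0)| - 1$ and then carrying out a one-line algebraic simplification. First I would unpack what the hypothesis $d_\beta(1) = t_1(0\cdots0(t_1-1))^\omega$ forces on the parameters: the preperiod is the single letter $t_1$, so $m = 1$, while the period $t_{m+1}\cdots t_{m+p} = t_2\cdots t_{p+1} = 0^{p-1}(t_1-1)$ gives $t_{m+p} = t_{p+1} = t_1 - 1$, hence $t = \min\{t_m,t_{m+p}\} = t_1 - 1$ and $0^t m = 0^{t_1-1}1$.

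The key preliminary step is to pin down $z$. Since $t_m = t_1 > t_1 - 1 = t_{m+p}$, the first branch of Corollary~\ref{cor:values_of _z} applies, so $z = 1 + \max\{j \mid 0^j \text{ is a suffix of } t_{m+1}\cdots t_{m+p}t_{m+1}\cdots t_{m+p-1}\}$. With $m = 1$ this string equals $0^{p-1}(t_1-1)0^{p-1}$, whose maximal suffix of zeros has length $p-1$ (the letter $t_1-1$ is nonzero), so $z = p$. In particular $z$ is a multiple of $p$, which selects the governing branch of Definition~\ref{dfn:z_n}; moreover $m = 1$ forces $k = 0$ and $\ell = n$, so for $n \geq 1$ we get $z^{(n)} = (0^t 1)\vpb(0^t 1)\cdots\vpb^{n-1}(0^t 1)$.

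Next I would prove $\vpb^n(0) = 0\,z^{(n)}$ by induction on $n$, which immediately yields $|z^{(n)}| = |\vpb^n(0)| - 1$. The base case $n = 1$ reads $\vpb(0) = 0^{t_1}1 = 0\cdot 0^{t_1-1}1 = 0\,z^{(1)}$. For the inductive step I use $\vpb(0) = 0\cdot 0^t 1$ together with $z^{(n+1)} = z^{(n)}\vpb^n(0^t 1)$, so that $\vpb^{n+1}(0) = \vpb^n(\vpb(0)) = \vpb^n(0)\,\vpb^n(0^t 1) = 0\,z^{(n)}\,\vpb^n(0^t 1) = 0\,z^{(n+1)}$. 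Finally, substituting $|z^{(n)}| = |\vpb^n(0)| - 1$ into the formula of Proposition~\ref{pro:result_for_E_I} gives
\[
    t_1 + \frac{|z^{(n)}| + |\vpb^n(1)| - 1}{|\vpb^n(0)|} = t_1 + \frac{|\vpb^n(0)| + |\vpb^n(1)| - 2}{|\vpb^n(0)|} = t_1 + 1 + \frac{|\vpb^n(1)| - 2}{|\vpb^n(0)|},
\]
and taking the supremum over $n$ produces the claimed expression. I expect the only genuinely delicate point to be the computation $z = p$, since it is precisely what justifies invoking the multiple-of-$p$ branch of $z^{(n)}$; the induction and the arithmetic rearrangement are then routine.
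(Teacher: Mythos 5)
Your proposal is correct and follows the same route as the paper, which simply notes that one can prove $\vpb^n(0)(z^{(n)})^{-1}=0$ (equivalently $|z^{(n)}|=|\vpb^n(0)|-1$) by induction and then substitutes into Proposition~\ref{pro:result_for_E_I}. Your write-up supplies the details the paper leaves implicit, in particular the verification that $z=p$ so that the multiple-of-$p$ branch of Definition~\ref{dfn:z_n} applies, and the induction itself; both are carried out correctly.
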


\subsection{Bispecial factors of type $(II)$}

We have split the set of all BS factors of $\ubeta$ into two
subsets: BS factors of type $(I)$ and of type $(II)$. From the
point of view of computing the critical exponent, we are done with
those of type $(I)$. Regarding the type $(II)$, we define an analogue
of $\E_{I}(\ubeta)$:
\begin{dfn}
    $$
        \E_{I\!I}(\ubeta) =  \sup\{ \ind{w} \mid (v,w) \in \B(\ubeta), \text{ $v$ of type
        $(II)$}\}.
    $$
\end{dfn}
In what follows, we will find a~condition under which it holds that
$\E_{I\!I}(\ubeta) < \E_{I}(\ubeta)$ and so $\E(\ubeta) =
\E_{I}(\ubeta)$.

As a first step we will show that any BS factor of type $(II)$ is the
$f^n$-image of the empty word. According to the definition, a BS
factor of type $(II)$ is the $f^n$-image either of the empty word or
of an $(a-c,b-d)$-bispecial factor $0^s, s > 0,$ with $p|(a-b)$ and $a \not = 0$.
For simplicity, assume that $c \neq m$ and $c \geq a$ (the other cases are similar, but more technical),
then $a0^sc = a0^{t_c}c$ is a factor of $\ubeta$. Consequently,
$(a-1)(c-1)$ is a factor as well. If $a-1$ is not zero (and again
for simplicity $c-1 \neq m$), then $(a-2)(c-2)$ is a factor.
Continuing in the same manner, we get $0(c-a)$ is a factor and
the $(a-c,b-d)$-bispecial factor $0^s$ is the $f^{a}$-image of the
$(0-(c-a),b'-d')$-bispecial factor $\e$ (for certain $b',d' \in
\A$). This is the idea of the proof of the following lemma.
\begin{lem}
    For any $(a-c, b-d)$-bispecial factor $v$ of type $(II)$ with $p|(a-b)$,
    there exist $n$ and $b',c',d' \in
    \A$ such that $v$ is the $f^n$-image of $(0-c',b'-d')$-bispecial
    factor $\e$, where, moreover, $b'$ is divisible by $p$.
\end{lem}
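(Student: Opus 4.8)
The plan is to prove the statement by a \emph{desubstitution} (recognizability) descent, making precise the idea sketched above. First I would use Lemma~\ref{lem:generators_of_BS} together with the definition of type $(II)$ to reduce to the generating factor: any type $(II)$ bispecial factor is $v = f^N(0^s)$ for an initial $(a-c,b-d)$-bispecial factor $0^s$ with $0 \le s < t_1$, and the case $s = 0$ is immediate. So assume $s > 0$, $a \neq 0$ and $p \mid (a-b)$. Since the $f$-images compose, it suffices to show that $0^s$ is itself the $f^{a}$-image of the empty word carrying suitable labels; then $v = f^{N+a}(\e)$ and the required index is $n = N+a$.

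The core is the descent itself, which I first carry out in the generic configuration $c \neq m$, $c \geq a$, with the descending chain avoiding the letter $m$. Because $a \neq 0$, the left extension $a$ is a nonzero letter, and a nonzero right extension $c \neq m$ is immediately preceded in $\ubeta$ by exactly $t_c$ zeros, which in turn are preceded by a nonzero letter (every $\vpb$-image ends in a nonzero letter); hence $s = t_c$ and $a\,0^{s}c = a\,\vpb(c-1)$. Since $\ubeta = \vpb(\ubeta)$ and, as $c \neq m$, the ending letter of a block identifies the block uniquely, the only way $a\,0^{t_c}c$ can occur is as the last letter of $\vpb(a-1)$ concatenated with the block $\vpb(c-1)$; this forces $(a-1)(c-1) \in \L(\ubeta)$. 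Iterating the same argument on the shortening two-letter context decrements both labels, and running it in parallel on the second extension $b\,0^{s}d$ decrements $b$ as well. After $a$ steps the left label reaches $0$, the first extension becomes $0(c-a)$, and the second becomes $(b-a)d'$ for some $d'$. Reading these iterated desubstitutions backwards through Definition~\ref{map_BS} and the lemma computing $f_L$ and $f_R$ identifies $0^s$ as the $f^{a}$-image of the empty word viewed as a $(0-(c-a),(b-a)-d')$-bispecial factor. Finally $p \mid (a-b)$ gives $b - a \equiv 0 \pmod p$, so $b' := b-a$ is the required multiple of $p$ and $c' := c-a$.

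The main obstacle is the branching of the descent at the letter $m$. Since $m$ is the image under $\vpb$ of both $m-1$ and $m+p-1$, a two-letter factor that ends in (or whose chain passes through) $m$ does not desubstitute uniquely, and this is exactly where the exceptional value $f_L(m-1,m+p-1) = 0^{t}m$ and the periodic part of the alphabet enter the bookkeeping. Handling these cases amounts to choosing the correct $\vpb$-preimage at each occurrence of $m$ and verifying that the accumulated left labels still reach $0$ after $a$ steps while preserving the residue modulo $p$; the configurations set aside ``for simplicity'' above, namely $c = m$ and $c < a$ (where the right label wraps around through $\oplus$), are treated the same way at the cost of more careful indexing. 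A last routine point is that the empty word stays genuinely bispecial throughout the descent, i.e. that both $0(c-a)$ and $(b-a)d'$ lie in $\L(\ubeta)$; this follows by performing the descent simultaneously on the two extensions and invoking the uniform recurrence of $\ubeta$.
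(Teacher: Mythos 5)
Your proposal follows essentially the same route as the paper, which in fact gives only the descent idea in the paragraph preceding the lemma: from $a0^{s}c = a0^{t_c}c \in \L(\ubeta)$ deduce $(a-1)(c-1) \in \L(\ubeta)$, iterate until the left label reaches $0$, and read the chain backwards as $f^{a}$ applied to the empty bispecial factor, with $b' = b - a$ inheriting divisibility by $p$. You identify and discuss the same technical cases the paper explicitly sets aside ($c = m$, $c < a$, and the non-unique desubstitution at the letter $m$), so the proposal matches the paper's argument in both substance and level of detail.
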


Let $v$ be an $f^n$-image of $(0-c,b-d)$-bispecial  factor $\e$,
with $p \mid b$. For $n < m$ we have either $v = \e$ or $v$ is the
$f^\ell$-image of $0^s$ for some $0 \leq \ell < n$ (if there
exists $\ell > 0$ such that $\vpb^{n-\ell}(c) = 0^s(c \oplus (n -
\ell))$). These cases are not interesting since such BS factors
are, in fact, of type $(I)$. The interesting cases are those when
$n \geq m$. For such $n$, the BS factor $v$ begins in $z^{(n)}$
and hence it can only be a power of a~LRW of $z^{(n)}$. But, we have already described
all LRWs of $z^{(n)}$; in particular, for those of type $(A)$ we
know also their maximal power. Thus, given a~BS factor of type
$(II)$, we look for the shortest $LRW$ of $z^{(n)}$ of type $(B)$
or $(C)$ such that the BS factor $v$ is its power.
\begin{lem}\label{lem:proof_of_main_thm}
    Let $w$ be a LRW of $z^{(n)}$ of type $(B)$ or $(C)$. Then:
    \begin{itemize}
        \item[(i)] the index of $w$ in $\ubeta$ is less than $4$,
        \item[(ii)] if, moreover, $p$ does not divide $z$, then
        the index is less than $3$.
    \end{itemize}
\end{lem}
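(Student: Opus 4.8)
The plan is to bound the index of a left return word $w$ of $z^{(n)}$ of type $(B)$ or $(C)$ by comparing the length of $w$ against the length of the bispecial factor $v$ of which $w$ is a root, and then showing that $|v|/|w|$ cannot reach $4$ (respectively $3$ when $p \nmid z$). The key structural input is the classification of return words in Lemma~\ref{lem:RWs} and its corollary: a type $(B)$ or type $(C)$ return word of $z^{(n)}$ is a $\vpb^{n-m}$-image (suitably prolonged on the left to capture all of $z^{(n)}$) of a complete return word of $0^t m$ of type $(B)$ or $(C)$. So I would first reduce the whole computation to the base case $n = m$, i.e.\ to return words of $0^t m$ itself, because applying $\vpb^{n-m}$ multiplies both $|w|$ and the length of any power of $w$ by the same asymptotic factor; the index is (essentially) preserved under the substitution, modulo the bounded left-prolongation by $w'$ that the corollary forces when $p \mid z$. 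This already hints at why the two cases in the statement split exactly along the divisibility of $z$ by $p$.

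Next I would make the length comparison explicit. A factor $v$ that is a power $w^\ell w''$ of a return word $w$ of type $(B)$ or $(C)$ is itself a bispecial factor of type $(II)$, so $v$ begins in $z^{(n)}$ and is sandwiched between two copies of $z^{(n)}$ coming from the return-word structure. The crucial point is that $z^{(n)}$ is long relative to a type $(B)$ or $(C)$ return word: because $0^t m$ occurs rarely (only as a suffix of $\vpb^{m+pk}(0)$), a type $(B)$ or $(C)$ complete return word of $0^t m$ is \emph{short}, being a suffix of a $\vpb^m$- or $\vpb^p$-image of a word containing at most one interior occurrence pattern. I would quantify this by writing $w = z^{(n)} \cdot r$ for the appropriate ``gap'' factor $r$ and estimating $|r|/|z^{(n)}|$ from above using the explicit form of $z^{(n)}$ in Definition~\ref{dfn:z_n}; the number of full copies of $w$ that fit before a forced recurrence of $z^{(n)}$ is then bounded by a small constant.

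The main obstacle I expect is case $(ii)$: pushing the bound from $4$ down to $3$ under the extra hypothesis $p \nmid z$. When $p \nmid z$, the corollary tells us that no left-prolongation $w'$ is needed, so $z^{(n)}$ sits as a clean $\vpb^{n-m}$-image of $0^t m$ and its return words are \emph{exactly} the $\vpb^{n-m}$-images of the return words of $0^t m$, with no extra slack. This rigidity should cut down the admissible number of consecutive repetitions, because a third full copy of a type $(B)$ or $(C)$ return word would force a recurrence of $0^t m$ of type $(A)$ inside, contradicting the type classification. I would therefore argue by contradiction: assume the index reaches the threshold, locate the implied extra occurrence of $z^{(n)}$ (equivalently of $0^t m$ after pulling back by $\vpb^{n-m}$), and show it either violates the ``$w_2$, $w_3$ do not contain $0^t m$'' conditions of Lemma~\ref{lem:RWs} or forces the return word to actually be of type $(A)$. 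The bookkeeping distinguishing the three return-word types $(A)$, $(B)$, $(C)$ under these two divisibility regimes is where the technical weight of the proof lies, but each individual step reduces to the length estimates above once the correct occurrence has been pinned down.
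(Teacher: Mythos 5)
There is a genuine gap, and it sits exactly where the content of the lemma lies: the dichotomy between the bounds $4$ and $3$. Your plan is to reduce to the base case $n=m$ on the grounds that ``the index is essentially preserved'' under $\vpb^{n-m}$, modulo a bounded left-prolongation. But for $n=m$ one has $z^{(m)}=0^tm$ and the index of every type $(B)$ or $(C)$ return word is less than $3$, whereas the paper's own example ($d_\beta(1)=22(01)^\omega$, where the LRW $02001001002$ of $z^{(4)}$ has index $3+\frac{1}{11}$) shows the index genuinely exceeds $3$ for larger $n$ when $p\mid z$. So the preservation claim is false, and the reduction erases precisely the phenomenon that forces the bound $4$ in case $(i)$. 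The reason is that when $p\mid z$ the word $z^{(n)}$ is \emph{not} the $\vpb^{n-m}$-image of $z^{(m)}$ but the longer concatenation of Definition~\ref{dfn:z_n}; the maximal power of $w$ has length about $|z^{(n)}|+2|w|-1$, so the index is bounded by $2+|z^{(n)}|/|w|$, and it is the ratio $|z^{(n)}|/|w|$ (at most $1$ when $p\nmid z$, but possibly between $1$ and $2$ when $p\mid z$) that produces the split $3$ versus $4$. Your proposed length comparison goes in the wrong direction: you write $w=z^{(n)}\cdot r$ and want to bound $|r|/|z^{(n)}|$ from above, but a left return word of $z^{(n)}$ need not contain $z^{(n)}$ as a prefix at all (the paper warns that a LRW may be shorter than the word itself, and in the example $|w|=11<13=|z^{(4)}|$), and in any case the largeness of $z^{(n)}$ relative to $w$ is what pushes the index \emph{up}, not what bounds it.

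The second missing ingredient is the mechanism that actually truncates the power, i.e.\ the reason the factor $2$ appears in $2+|z^{(n)}|/|w|$. In the paper this is the observation that a complete return word of type $(B)$ ends in the pattern $Y0^{t_{m+p}}m$ and is therefore necessarily followed by a return word of type $(C)$, which is different from $w$; hence the maximal power of $w$ is a proper prefix of $z^{(n)}v_Rv_R$ for the corresponding right return word $v_R$. Type $(C)$ is then disposed of by noting that its return words are $\vpb^p$-images of those of type $(B)$. Your sketch instead argues that a third copy of $w$ would ``force a recurrence of $0^tm$ of type $(A)$'', which is both unsubstantiated and not what happens (in the $p\mid z$ example three full copies of $w$ do occur, with only one complete return of $z^{(n)}$ inside); you are conflating the number of copies of $w$ with the number of complete returns of $z^{(n)}$. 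Without (a) the ``next return is of type $(C)$'' argument and (b) the upper bounds $|z^{(n)}|\le|w|$ (for $p\nmid z$) and $|z^{(n)}|<2|w|$ (for $p\mid z$), neither part of the lemma follows from what you have written.
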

\begin{proof}
    First assume that $n = m$ (and so $z^{(m)} = 0^tm$) and that
    $w$ is of type $(B)$. Then we must have that $w0^tm$ is a suffix
    of $X0^{t_m}mv0^{t_{m+p}}m$ for some $v \in \A^{*}$ not containing $m$
    (see Lemma~\ref{lem:RWs}). This factor is always followed by a RRW of $0^tm$ of type $(C)$
    which is known to be different from $w$ of type $(B)$.
    Therefore, the longest power of $w$ which can appear in
    $\ubeta$ is at most as long as
    $0^{t}mv0^{t_{m+p}}mv0^{t_{m+p}}$. Hence the index of $w$
    is less than 3.

    If $w$ is a LRW of $0^tm$ of type $(C)$, then its index must
    be less than $3$ as well since we know that LRWs of type
    $(C)$ are $\vpb^{p}$-images of those of type $(B)$ (in the sense of
    Lemma~\ref{lem:RWs}).

    Now assume $n > m$, $w$ of type $(B)$, and the case when $p$ does not divide $z$. In
    such a case $z^{(n)} = \vpb^{n-m}(0^tm)$ and so the maximal
    power of $w$ is at most the factor $\vpb^{n-m}(0^{t}mv0^{t_{m+p}}mv0^{t_{m+p}}m)$
    without the last letter, where $v$ is as above. Clearly, the
    index is still less than three. If $w$ is of type $(C)$, we can
    use the same argument as previously and so prove $(ii)$.

    The remaining case is when $p$ divides $z$. Then $z^{(n)}$ is
    not just a $\vpb$-image of $0^tm$ but it is longer. However, if
    $w$ is of type $(B)$, then it is still true that the maximal
    power of it is at most as long as
    $z^{(n)}\vpb^{n-m}(v0^{t_{m+p}}mv0^{t_{m+p}}m)$ without the
    last letter. We know that $z^{(n)}$ is a suffix of
    $z^{(n)}\vpb^{n-m}(v0^{t_{m+p}}m)$ but since it can be possibly
    longer than $\vpb^{n-m}(v0^{t_{m+p}}m)$, the index of $w$ might
    be greater than $3$ (an example of such a situation is
    given below). However, the index cannot get over 4. The case of $w$ of type
    $(C)$ can be again brushed off by the argument used before.
\end{proof}
This result gives us the following upper bound on
$\E_{I\!I}(\ubeta)$.
\begin{cor}\label{cor:proof_of_main_thm}
   The following holds:
    \begin{itemize}
        \item[(i)] $\E_{I\!I}(\ubeta) \leq 4$,
        \item[(ii)] if, moreover, $p$ does not divide $z$, then
        $\E_{I\!I}(\ubeta) \leq 3$.
    \end{itemize}
\end{cor}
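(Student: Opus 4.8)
The plan is to read off the bound directly from Lemma~\ref{lem:proof_of_main_thm}, after showing that, up to a harmless reduction, $\E_{I\!I}(\ubeta)$ is a supremum of indices of left return words of $z^{(n)}$ of type $(B)$ or $(C)$. First I would recall, from the reduction carried out above, that every BS factor $v$ of type $(II)$ is an $f^n$-image of the empty word; by Lemma~\ref{lem:BS_generator} it therefore has the shape $v = z^{(n)}\vpb^n(c)(c\oplus n)^{-1}$ for suitable $c$ and $n$. The values $n < m$ give $z^{(n)} = \e$ and produce factors that are in fact of type $(I)$, so they do not enter the supremum defining $\E_{I\!I}(\ubeta)$ at all; hence I would restrict attention to $n \geq m$, where $z^{(n)}$ is a nonempty prefix of $v$.

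Next I would invoke Lemma~\ref{lem:RWs_method} with $\tilde w = z^{(n)}$: since $z^{(n)}$ is a prefix of $v$, the shortest factor $w$ with $(v,w)\in\Bubeta$, i.e.\ the root factor of $v$, is necessarily a LRW of $z^{(n)}$. The crucial structural point is that this root is of type $(B)$ or $(C)$ and not of type $(A)$. This is because a type $(II)$ factor carries the empty core, so $v$ contains no full block $\vpb^n(0)$; by the characterisation in Lemma~\ref{lem:RWs}, a type $(A)$ return word of $z^{(n)}$ must have $\vpb^n(0)$ as a suffix, which cannot fit inside such a $v$. Thus only return words of type $(B)$ or $(C)$ can serve as the root of $v$, exactly as announced in the paragraph introducing the two types.

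With this reduction in hand the corollary is immediate. By Corollary~\ref{cor:critical_exp_alternative} the supremum defining $\E_{I\!I}(\ubeta)$ may be taken over the shortest admissible $w$ for each $v$, i.e.\ over the roots, so that $\E_{I\!I}(\ubeta)$ equals the supremum of $\ind{w}$ over all $w$ that are LRWs of some $z^{(n)}$ of type $(B)$ or $(C)$. Lemma~\ref{lem:proof_of_main_thm}(i) bounds each such $\ind{w}$ by a quantity strictly below $4$, whence the supremum is at most $4$, giving $(i)$; under the additional hypothesis $p\nmid z$, Lemma~\ref{lem:proof_of_main_thm}(ii) replaces the bound $4$ by $3$ and yields $(ii)$.

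The step I expect to demand the most care is the exclusion of type $(A)$ roots for type $(II)$ factors, that is, making precise that the empty core forbids a full $\vpb^n(0)$ block inside $v$ and that the surviving return words are precisely the ones estimated in Lemma~\ref{lem:proof_of_main_thm}. The only remaining subtlety is purely quantitative: each individual index is \emph{strictly} less than $4$ (resp.\ $3$), but a supremum of numbers each below $4$ need only satisfy $\leq 4$, which is why the two conclusions are stated with non-strict inequalities.
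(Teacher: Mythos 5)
Your proposal is correct and follows essentially the same route as the paper, which states this corollary without a separate proof: the reduction of type $(II)$ factors to powers of left return words of $z^{(n)}$ of type $(B)$ or $(C)$ is exactly the content of the discussion preceding Lemma~\ref{lem:proof_of_main_thm}, and the numerical bounds are then read off from that lemma. The only point you develop beyond the paper is the explicit exclusion of type-$(A)$ roots (the paper leaves this implicit by appealing to the shortest-root reduction of Corollary~\ref{cor:critical_exp_alternative}); your argument for it is sound, though to make it airtight one should note that pairs with $|v|<|w|$ contribute index less than $2$ and that the non-occurrence of $\vpb^n(0)$ inside $v$ rests on the primitivity of $\vpb^n(0)$.
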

As we promised, here is an example of the situation, where a LRW of $z^{(n)}$ of
type $(B)$ or $(C)$ can have the index strictly greater than $3$, in
other words that $\E_{I\!I}(\ubeta) > 3$ can happen.
\begin{exa}
    Let $d_\beta(1) = 22(01)^\omega$. The substitution $\vpb$
    then reads: $0 \mapsto 001, 1 \mapsto 002, 2 \mapsto 3, 3
    \mapsto 02$ and we have $m = 2, p = 2, t = 1, z = 2,$
    and $0^tm = 02$. There is only one CRW of $02$ of type $(B)$,
    namely the suffix of $\vpb^2(02) = \cdots 00202 = \cdots
    0^{t_m}m0^{t_{m+p}}m$. Hence, the factor $v$ from the previous
    proof is the empty word and, since $0020200$ is a factor, the maximal power of
    $02$ reads $02020$. Now consider $z^{(4)} = 02\vpb^{2}(02) =
    0200100100202$. If we compute $\vpb^{2}(0020200)$, we see that
    $$
        02\vpb^{2}(02)\vpb^{2}(02)0010010020 = (02001001002)^3 0
    $$
    is a factor and so the index of the LRW of $z^{(4)}= 02001001002$ is $3 + \frac{1}{11}$.
\end{exa}
Having divided the set of all BS factors into two
disjoint subsets - those of type $(I)$ and of
type $(II)$ - it obviously holds
\begin{equation}\label{}
    \E(\ubeta) = \max\{\E_{I}(\ubeta), \E_{I\!I}(\ubeta)\}.
\end{equation}
Since we know that $\E_{I}(\ubeta) > t_1$,
Theorem~\ref{thm:main_thm} is a simple consequence of Corollary~\ref{cor:proof_of_main_thm} and Proposition~\ref{pro:result_for_E_I}.

\vspace{1cm}
The assumptions of Theorem~\ref{thm:main_thm}
are quite strong and, in fact, we were able to weaken them
slightly. More precisely, we managed to identify the cases when
$\E_{I\!I}(\ubeta) < 2$ and so to prove that in such cases $\E(\ubeta) =
\E_{I}(\ubeta)$ even if $t_1 = 2$. Unfortunately, the proof of
this result is so technical that it becomes almost unreadable. Moreover, there still
remain some $\beta$s such that we are not able to decide whether
$\E(\ubeta)$ is equal to $\E_{I}(\ubeta)$ or to
$\E_{I\!I}(\ubeta)$. In order to show that the latter can happen,
we give the following proposition.
\begin{pro}
    Let the following conditions be satisfied: $\vpb^{m}(p) = m,\ 0p\in\L(\ubeta),\ t_1 = 2,$  and
    $|\vpb^{n}(m)| \geq |\vpb^{n}(1)|$, for $n = 1,2,\ldots,m-1.$  Then
    $$
        \E(\ubeta) =    \E_{I\!I}(\ubeta).
    $$
\end{pro}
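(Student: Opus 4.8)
The plan is to establish the two bounds $\E_{I\!I}(\ubeta)=3$ and $\E_I(\ubeta)\le 3$; since $\E(\ubeta)=\max\{\E_I(\ubeta),\E_{I\!I}(\ubeta)\}$, these give $\E(\ubeta)=\E_{I\!I}(\ubeta)$ at once.

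First I would read off the arithmetic forced by $\vpb^m(p)=m$. As $\vpb^m(p)$ has length one and ends in $p\oplus m=m$, the last step $\vpb(m+p-1)=0^{t_{m+p}}m$ may produce no zeros, so $t_{m+p}=0$; with $t_m\ne t_{m+p}$ this yields $t=0$ and $0^tm=m$, hence $z^{(m)}=m$. The same length-one requirement forces $t_{p+1}=\cdots=t_{m+p}=0$, and non-simplicity then rules out $p\le m$, so $p>m$. Substituting these zeros into Corollary~\ref{cor:values_of _z} gives $z=m+e$ with $0\le e<p-m$, so $1\le z<p$ and $p\nmid z$. Therefore $z^{(n)}=\vpb^{\,n-m}(m)$ for all $n\ge m$, and Corollary~\ref{cor:proof_of_main_thm}(ii) supplies the upper bound $\E_{I\!I}(\ubeta)\le 3$.

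Next I would manufacture type-$(II)$ factors attaining index $3$ in the limit. Applying $\vpb^m$ to the hypothesised factor $0p$ gives $\vpb^m(0)\,m\in\L(\ubeta)$; since $\vpb^m(0)$ ends in $0^{t_m}m$, this is precisely a type-$(B)$ complete return word of $0^tm=m$ (in the sense of Lemma~\ref{lem:RWs}), whence $(z^{(n)})^2=\vpb^{\,n-m}(mm)\in\L(\ubeta)$ for every $n\ge m$. The corresponding type-$(II)$ bispecial factor, the $f^n$-image of a $(0-c,p-d)$-bispecial $\epsilon$, then has root $w_n=z^{(n)}$, so $\ind{w_n}$ is the maximal power of $z^{(n)}$ in $\ubeta$, namely $2+O_n/|z^{(n)}|$, where $O_n$ is the length of the common prefix of $z^{(n)}$ with its continuation past $(z^{(n)})^2$. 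Using $t_1=2$, i.e. $\vpb(0)=0^21$, one has $\vpb^n(0)=\vpb^{\,n-1}(0)^2\vpb^{\,n-1}(1)$, which identifies $O_n$ with $|\lcp{\vpb^{\,n-1}(0)}{\vpb^{\,n-1}(1)}|$. Putting $D_n=|\vpb^n(1)|-|\lcp{\vpb^n(0)}{\vpb^n(1)}|$, a self-similar comparison of the two images --- peeling off the shared prefix $\vpb^{\,n-1}(0)$ and iterating along the letter-cycle $1\mapsto 2\mapsto\cdots$ --- shows that $D_n$ satisfies a finite recursion and stays bounded. Hence $O_n/|z^{(n)}|\to 1$, $\ind{w_n}\to 3$, and so $\E_{I\!I}(\ubeta)=3$.

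It remains to check $\E_I(\ubeta)\le 3$, which by Proposition~\ref{pro:result_for_E_I} reduces to the length inequality $|z^{(n)}|+|\vpb^n(1)|\le|\vpb^n(0)|+1$ for all $n$. This is exactly where the final hypothesis enters: $|\vpb^k(m)|\ge|\vpb^k(1)|$ for $k=1,\dots,m-1$ controls $|z^{(n)}|=|\vpb^{\,n-m}(m)|$ against the images of $1$, and combined with $|\vpb^n(0)|=2|\vpb^{\,n-1}(0)|+|\vpb^{\,n-1}(1)|$ (again from $t_1=2$) it delivers the bound. I expect this length bookkeeping to be the main obstacle: the comparison of the images of $m$ and of $1$ is assumed only for the small exponents $k<m$ and must be propagated to every $n$ through the action of the incidence matrix, the delicate point being to keep $|z^{(n)}|+|\vpb^n(1)|$ below $|\vpb^n(0)|$ uniformly in $n$ rather than merely asymptotically.
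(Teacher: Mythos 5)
Your preliminary analysis is correct and in places more explicit than the paper's: $\vpb^m(p)=m$ forces $t_{p+1}=\cdots=t_{p+m}=0$, hence $t_{m+p}=0$, $t=0$, $p>m$, and $m\le z\le p-1$, so $p\nmid z$ and Corollary~\ref{cor:proof_of_main_thm}$(ii)$ gives $\E_{I\!I}(\ubeta)\le 3$. The lower bound $\E_{I\!I}(\ubeta)\ge 3$ is also attainable, but your computation of the overlap is off. Since $p\nmid z$ and $t=0$ one has $z^{(n)}=\vpb^{n-m}(m)=\vpb^{n}(p)$ for $n\ge m$; from $0p0\in\L(\ubeta)$ one gets that $z^{(n)}\vpb^{n}(p)\vpb^{n}(p)(p\oplus n)^{-1}$, i.e.\ $(z^{(n)})^{3}$ minus its last letter, is a factor, so the index is $3-1/|z^{(n)}|\to 3$ directly (this is the paper's route). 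The quantity you call $O_n$ is $|\lcp{\vpb^{n}(p)}{\vpb^{n}(0)}|=|z^{(n)}|-1$, not $|\lcp{\vpb^{n-1}(0)}{\vpb^{n-1}(1)}|$: the latter equals $|\vpb^{n-1}(1)|-1$, and the ratio $(|\vpb^{n-1}(1)|-1)/|\vpb^{n-m}(m)|$ tends to $\beta^{m-1}\triangle_1/\triangle_m$, which is not $1$ in general, so the step "$O_n/|z^{(n)}|\to 1$" does not follow from your identification and the auxiliary quantity $D_n$ (which is identically $1$ by the Parry condition) plays no role.

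The genuine gap is your third step. You correctly reduce $\E_I(\ubeta)\le 3$ to the uniform inequality $|z^{(n)}|+|\vpb^{n}(1)|\le|\vpb^{n}(0)|+1$, but you give no argument for it --- you only announce that the hypothesis $|\vpb^{k}(m)|\ge|\vpb^{k}(1)|$ for $k<m$ "must be propagated". This is precisely the substance of the proposition, and it is not clear how that hypothesis would help you: it bounds $|\vpb^{k}(m)|$ from \emph{below}, i.e.\ it pushes $|z^{(n)}|=|\vpb^{n-m}(m)|$ up, which works against the upper bound you need. The paper avoids the absolute bound altogether: it proves the term-by-term domination $\ind{w_I^{(n)}}\le\ind{z^{(n+m)}\vpb^{n+m}(p)(z^{(n+m)})^{-1}}$ for every $n$, which after substituting $\vpb^{n+m}(p)=\vpb^{n}(m)$ reduces to
\[
(|z^{(n)}|-1)\bigl(|\vpb^{n}(0)|-|\vpb^{n}(m)|\bigr)+|\vpb^{n}(m)|\bigl(|\vpb^{n}(0)|-|\vpb^{n}(1)|\bigr)\ge 0,
\]
and here the hypothesis enters with the right sign, exactly in the range $n<m$ where $z^{(n)}=\e$. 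Since $\E(\ubeta)=\max\{\E_I(\ubeta),\E_{I\!I}(\ubeta)\}$, that relative comparison already yields $\E(\ubeta)=\E_{I\!I}(\ubeta)$ without ever establishing your length inequality. As it stands, your proposal is missing the proof of its central inequality, and the route you sketch for it does not obviously close.
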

\begin{proof}

Obviously, $0p0$ is a factor of $\ubeta$. Hence,
$0^tm0^{t_{m+p}}m0^t = mm$ is a factor as well. Furthermore,
$z^{(n)}\vpb^{n}(p)\vpb^{n}(p)(p \oplus n)^{-1}$ is also a factor
(it follows from the observation that $\vpb^n(a)(a\oplus n)^{-1}$
is a prefix of $\vpb^n(0)$ for every letter $a$) and it is a power
of $z^{(n)}\vpb^{n}(p)(z^{(n)})^{-1}$. This is a conjugate of
$\vpb^n(p)$ and so of the same length. Clearly,
$z^{(n)}\vpb^{n}(p)(z^{(n)})^{-1}$ is a LRW of $z^{(n)}$ of type
$(B)$ and so
$$
    \E_{I\!I}(\ubeta) \geq \sup_{n \geq
    m}\{\ind{z^{(n)}\vpb^{n}(p)(z^{(n)})^{-1}}\}.
$$

We now prove that $\ind{w^{(n)}_{I}} \leq
\ind{z^{(n+m)}\vpb^{n+m}(p)(z^{(n+m)})^{-1}}$ for all $n \in \N$,
i.e., that
$$
2 + \frac{|z^{(n+m)}|  -
        1}{|\vpb^{n+m}(p)|} \geq t_1 + \frac{|z^{(n)}| + |\vpb^n(1)| -
        1}{|\vpb^n(0)|}.
$$
The assumptions and the fact that $z^{(n+m)} = z^{(n)}\vpb^{n}(m)$
yield the inequality
$$
    \frac{|z^{(n)}|  + |\vpb^{n}(m)|- 1}{|\vpb^{n}(m)|} \geq \frac{|z^{(n)}| + |\vpb^n(1)| -
        1}{|\vpb^n(0)|}
$$
equivalent to
$$
    (|z^{(n)}| - 1)(|\vpb^n(0)| - |\vpb^n(m)|) + |\vpb^n(m)|(|\vpb^n(0)| -
    |\vpb^n(1)|) \geq 0.
$$
And this is always true since for $n \geq m$ all members are
nonnegative and for $n < m$ is $|z^{(n)}| = 0$ and
$$
    |\vpb^n(0)| - |\vpb^n(m)| \leq |\vpb^n(0)| - |\vpb^n(1)|.
$$

\end{proof}
\begin{exa}
    Let $d_\beta(1) = 21(1200)^\omega$. The substitution $\vpb$
    then reads: $0 \mapsto 001, 1 \mapsto 02, 2 \mapsto 03, 3
    \mapsto 004, 4 \mapsto 5, 5 \mapsto 2$ and we have $m = 2, p = 4, t = 0, z = 2,$
    and $0^tm = 2$. It holds that $\vpb^m(p) = \vpb^2(4) = m =
    2$ and clearly $0p = 04$ is a factor of $\ubeta$. We will show that
    $\E_{I\!I}(\ubeta)= 3$. Indeed,
    $0^tm\vpb^m(p)(p \oplus m)^{-1} = 2$ is a BS factor with left
    extensions $0$ and $2$ and right extensions $0$ and $2$ as
    well. The BS factors $\vpb^{n-2}(22)(2 \oplus (n-2))^{-1}$ are
    then powers of $\vpb^{n-2}(2)$. We have proved that $\E_{I\!I}(\ubeta) \geq
    3$, the equality holds by Corollary~\ref{cor:proof_of_main_thm}.

    What about $\E_I(\ubeta)$ for this particular $\beta$? After
    some simple computation we get for $n \geq 2$ $w^{(n)}_{I} = \vpb^{n-2}(20010010)$
    and $v^{(n)}_{I} = \vpb^{n-2}(20010010200103)(3\oplus
    (n-2))^{-1}$. Hence
    $$
        \E_I = \sup_{n \geq 2}\left\{ 3 - \frac{|\vpb^{n-2}(010)| - |\vpb^{n-2}(3)| +
        1}{|\vpb^{n-2}(20010010)|}\right\} < 3.
    $$
\end{exa}

\section{The ultimate critical exponent}

In this section, we will find the ultimate critical exponent of
$\ubeta$ under the assumptions of Theorem~\ref{thm:main_thm}.
Using the formula for $\E(\ubeta)$ -- proven in the previous
section -- our task is to calculate the following limit
$$\E^*( \ubeta) =\lim_{n \to \infty}\left(t_1+\tfrac{|z^{(n)}|+|\vpb^n(1)|-1}{|\vpb^n(0)|}\right).$$

\subsection{Auxiliary limits}

In order to be able to compute the desired limit, we will need
some technical results. For calculation of the lengths of $z^{(n)}$,
$\vpb^n(0)$,  and  $\vpb^n(1)$, we will use the notions of the Parikh
vector $\Psi(w)$ and the incident matrix $M$ of a primitive
substitution $\varphi$. Recall that $\vec{e}$ stands for a column
vector whose all entries are equal to one.  As a~simple consequence of~\eqref{parikh}, we get
the following lemma.
\begin{lem}\label{lem:number_of_letters_in_vp_to_n}
    For all $n \in \N$ and $w \in \A^*$ we have
    $$
        |\varphi^n(w)| = \Psi(w)M^n\vec{e}\,.
    $$
\end{lem}
Since the matrix $M$ is primitive, there exists a simple dominant
eigenvalue $\beta \in \R$ such that any other eigenvalue is in
modulus less than $\beta$. Denote $\vec{x}$ and $\vec{y}$ a left
and right eigenvector for $\beta$ (they can be chosen nonnegative), i.e.,
$$
    \vec{x}M = \beta \vec{x} \quad
    \text{and}\quad M\vec{y} = \beta
    \vec{y}.
$$
Let $\mathcal{J}$ be the Jordan canonical form of $M$ such that
\begin{equation}\label{eq:jordan_form}
    M = P\mathcal{J}P^{-1} = P\begin{pmatrix}
      \beta & \vec{0} \\
      \vec{0}^{\ T} & \mathcal{J}_{22} \\
    \end{pmatrix}P^{-1},
\end{equation}
where $\vec{0} = (0,\ldots,0)$ is a zero vector of the
corresponding size and $\mathcal{J}_{22}$ contains the
Jordan blocks corresponding to the eigenvalues different from
$\beta$. With this notation, we see that $P$ can be chosen so
that the first column of $P$ is $\vec{y}$ and the first row of
$P^{-1}$ is $\vec{x}$, but with the condition that for the eigenvectors
in question we have $\vec{x}\vec{y} = 1$. Moreover, for any Jordan block in
$\mathbb{R}^{d\times d}$ and
 an exponent $n \in \mathbb{N}$, one can prove by induction
\begin{equation} \label{eq:power_of_jordan_block}
{\begin{pmatrix}
      \lambda & 1 & \cdots & 0 \\
      0 & \lambda & \cdots & 0 \\
      \vdots & \vdots &  & \vdots \\
      0 & 0 & \cdots & \lambda \\
    \end{pmatrix}}^{\!\!\!n} = \begin{pmatrix}
      \lambda^n & \lambda^{n-1}\binom{n}{1} & \lambda^{n-2}\binom{n}{2} & \cdots & \lambda^{n-d+1}\binom{n}{d-1} \\
      0 & \lambda^n & \lambda^{n-1}\binom{n}{1} & \cdots & \lambda^{n-d+2}\binom{n}{d-2} \\
      \vdots & \vdots & \vdots &  & \vdots \\
      0 & 0 & 0 & \cdots & \lambda^n \\
    \end{pmatrix}
\end{equation}
All these facts allow us to prove easily the following lemma.
\begin{lem} \label{lem:limit_the_case_of_letters}
    Let $M$ be a primitive nonnegative matrix $M$ with the dominant eigenvalue $\beta$ and  $P$ the matrix
    defined by \eqref{eq:jordan_form}. Then
    \begin{itemize}
 \item[(i)]
    $$
        \lim_{n \to \infty} \frac{1}{\beta^n}M^n = P\begin{pmatrix}
      1 & \vec{0} \\
      \vec{0}^{\ T} & \Theta \\
    \end{pmatrix}P^{-1},
    $$

      \item[(ii)]
    $$
        \lim_{n \to \infty} \frac{1}{\beta^{s+nr}}\sum_{i=0}^{n-1}M^{s+ri} = \frac{1}{\beta^r - 1}P
        \begin{pmatrix}
      1 & \vec{0} \\
    \vec{0}^{\ T} & \Theta \\
      \end{pmatrix}P^{-1},
    $$
    \end{itemize}
    where $\Theta$ is a zero matrix of the corresponding size and  $s,r \in \mathbb{N}$, $r$ positive.
\end{lem}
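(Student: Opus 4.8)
The lemma computes two limits involving powers of the primitive matrix $M$. The whole proof rests on the Jordan decomposition \eqref{eq:jordan_form} together with the block-power formula \eqref{eq:power_of_jordan_block}. My plan is to push the scalar factor $1/\beta^n$ (resp.\ $1/\beta^{s+nr}$) through the conjugation by $P$, so that everything reduces to understanding how the normalized Jordan blocks behave as $n \to \infty$.

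For part $(i)$, I would write
$$
    \frac{1}{\beta^n}M^n = P \frac{1}{\beta^n}\mathcal{J}^n P^{-1}
    = P \begin{pmatrix} 1 & \vec{0} \\ \vec{0}^{\ T} & \frac{1}{\beta^n}\mathcal{J}_{22}^n \end{pmatrix} P^{-1},
$$
using that the top-left block of $\mathcal{J}^n$ is simply $\beta^n$. The only thing left is to show $\frac{1}{\beta^n}\mathcal{J}_{22}^n \to \Theta$. Since $\mathcal{J}_{22}$ collects the Jordan blocks for eigenvalues $\lambda$ with $|\lambda| < \beta$, I apply \eqref{eq:power_of_jordan_block} entrywise: a typical entry of $\mathcal{J}_{22}^n$ has the form $\lambda^{n-j}\binom{n}{j}$ for some fixed $j$, and dividing by $\beta^n$ gives $\lambda^{-j}\binom{n}{j}(\lambda/\beta)^n$. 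Because $|\lambda/\beta| < 1$, the exponential decay beats the polynomial growth of $\binom{n}{j}$, so every such entry tends to $0$. Hence the normalized block vanishes in the limit and $(i)$ follows.

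For part $(ii)$, I would first pull out the decomposition to obtain
$$
    \frac{1}{\beta^{s+nr}}\sum_{i=0}^{n-1}M^{s+ri}
    = P \left( \frac{1}{\beta^{s+nr}}\sum_{i=0}^{n-1}\mathcal{J}^{s+ri} \right) P^{-1}.
$$
The top-left scalar entry of the bracketed sum is $\frac{1}{\beta^{s+nr}}\sum_{i=0}^{n-1}\beta^{s+ri} = \frac{1}{\beta^{nr}}\sum_{i=0}^{n-1}\beta^{ri} = \frac{1}{\beta^{nr}}\cdot\frac{\beta^{nr}-1}{\beta^{r}-1} \to \frac{1}{\beta^{r}-1}$, which produces the prefactor in the stated formula. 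For the lower-right block I must show $\frac{1}{\beta^{s+nr}}\sum_{i=0}^{n-1}\mathcal{J}_{22}^{s+ri} \to \Theta$. Each summand is, by $(i)$-type estimates, bounded entrywise by a constant times $(|\lambda|/\beta)^{s+ri}$ up to a polynomial factor, so the sum is dominated by a convergent geometric-type series and the $1/\beta^{s+nr}$ factor forces the whole thing to $0$. The mild obstacle here is purely bookkeeping: one must handle the polynomial factors $\binom{s+ri}{j}$ inside the sum, but these are again absorbed by the geometric decay, so termwise domination suffices.

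The only genuinely delicate point in either part is the off-diagonal Jordan entries, where the binomial coefficients grow polynomially in $n$; the key observation that makes the argument routine is that $|\lambda/\beta| < 1$ guarantees $n^{d}(\lambda/\beta)^n \to 0$ for every fixed power $d$, so I expect no real difficulty beyond writing this estimate cleanly.
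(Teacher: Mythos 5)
Your proposal is correct and follows essentially the same route as the paper: conjugate by $P$ from the Jordan decomposition \eqref{eq:jordan_form}, use \eqref{eq:power_of_jordan_block} to show the normalized subdominant blocks $\frac{1}{\beta^n}\mathcal{J}_{22}^n$ (and their partial sums) tend to $\Theta$ since $|\lambda|<\beta$ kills the polynomial binomial factors, and extract the prefactor $\frac{1}{\beta^r-1}$ from the scalar geometric sum. You merely spell out the entrywise estimates that the paper leaves implicit.
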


\begin{proof}
    To compute the second limit, we consider the Jordan form~\eqref{eq:jordan_form} and~\eqref{eq:power_of_jordan_block}.  We get
    $$
        \lim_{n \to \infty} \frac{1}{\beta^{s+nr}}
        \sum_{i=0}^{n-1}\mathcal{J}^{s+ri}_{22} = \Theta.
    $$
    The proof then follows by the simple fact that
    $$
        \frac{\sum_{i=0}^{n-1}\beta^{s+ri}}{\beta^{s+rn}} \longrightarrow \frac{1}{\beta^r - 1} \quad \text{as } n \to \infty.
    $$
The value of the first limit is obvious.
\end{proof}
In both cases, we got a~very similar expression on the right-hand
side. It can be even more simplified.
\begin{lem}\label{lem:the_constant_C_M}
    Let $M$ be a primitive matrix and let $\vec{y}$ be a right
    eigenvector corresponding to the dominant eigenvalue $\beta$.
    Then
    $$
        P
        \begin{pmatrix}
      1 & \vec{0}^{\ T} \\
      \vec{0} & \Theta \\
      \end{pmatrix}P^{-1}
\vec{e} = C_M \vec{y},
    $$
    where $P$ is given by~\eqref{eq:jordan_form} and $C_M$ is
    a~positive constant depending on the choice of $\vec{y}$.
\end{lem}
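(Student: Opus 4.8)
The plan is to recognize the matrix on the left-hand side as the spectral projection of $M$ onto its dominant eigendirection and then to read off the result directly from the block structure of $P$ and $P^{-1}$. The key observation is that the central factor
$$
\begin{pmatrix} 1 & \vec{0}^{\ T} \\ \vec{0} & \Theta \end{pmatrix} = \vec{g}\,\vec{g}^{\,T},
$$
where $\vec{g} = (1,0,\ldots,0)^T$ is the first column of the identity matrix; it is the rank-one projector onto the first coordinate.

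First I would compute $P\vec{g}$ and $\vec{g}^{\,T}P^{-1}$. By the choice of $P$ described after~\eqref{eq:jordan_form}, the first column of $P$ is $\vec{y}$ and the first row of $P^{-1}$ is $\vec{x}$, so $P\vec{g} = \vec{y}$ and $\vec{g}^{\,T}P^{-1} = \vec{x}$. By associativity this gives
$$
P\begin{pmatrix} 1 & \vec{0}^{\ T} \\ \vec{0} & \Theta \end{pmatrix}P^{-1} = (P\vec{g})(\vec{g}^{\,T}P^{-1}) = \vec{y}\,\vec{x},
$$
the outer product of the right and left dominant eigenvectors, normalized by $\vec{x}\vec{y}=1$. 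Applying this to $\vec{e}$ and using that $\vec{x}\vec{e}$ is a scalar, I obtain
$$
\vec{y}\,\vec{x}\,\vec{e} = (\vec{x}\vec{e})\,\vec{y},
$$
so the claimed identity holds with $C_M = \vec{x}\vec{e}$. This constant genuinely depends on the normalization of $\vec{y}$: rescaling $\vec{y}$ forces a reciprocal rescaling of $\vec{x}$ (to preserve $\vec{x}\vec{y}=1$), which rescales $C_M$ inversely, so that the product $C_M\vec{y}$ remains fixed.

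For positivity I would invoke Perron--Frobenius: since $M$ is primitive, the left eigenvector $\vec{x}$ associated with $\beta$ can be taken strictly positive, and $\vec{e}$ has all entries equal to one, whence $C_M = \vec{x}\vec{e} > 0$. I do not expect a serious obstacle here; the only point requiring care is matching the outer-product bookkeeping with the stated conventions that the first column of $P$ is $\vec{y}$ and the first row of $P^{-1}$ is $\vec{x}$, which is exactly what makes the rank-one factorization collapse onto $\vec{y}$.
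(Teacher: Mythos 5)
Your proof is correct and follows essentially the same route as the paper's: both exploit that the first column of $P$ is $\vec{y}$ and the first row of $P^{-1}$ is $\vec{x}$ with $\vec{x}\vec{y}=1$, and both arrive at $C_M=\vec{x}\vec{e}$. Your rank-one outer-product phrasing and the explicit Perron--Frobenius justification of positivity are just slightly more detailed packagings of the same computation the paper performs directly.
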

\begin{proof}
    As we said before, the first row of $P^{-1}$ is the left
    eigenvector of the dominant eigenvalue such that $\vec{x}\vec{y} =
    1$. Hence, we get
    $$
        \begin{pmatrix}
      1 & \vec{0}^{\ T} \\
      \vec{0} & \Theta \\
      \end{pmatrix}P^{-1}
         \vec{e} =
        \begin{pmatrix}
      \vec{x} \\
      \Theta' \\
      \end{pmatrix}
       \vec{e} =    \vec{x}   \vec{e}
        \begin{pmatrix}
      1  \\
      \vec{0}  \\
      \end{pmatrix}.
    $$
    This, along with that the first column of $P$ is $\vec{y}$,
    conclude the proof. Moreover, we get $C_M =  \vec{x}   \vec{e}$.
\end{proof}

\subsection{The case of $\ubeta$}

The incidence matrix $M$ of the non-simple Parry substitution $\vpb$,
having $\ubeta$ as its fixed point, is defined in~\eqref{eq:icidence_matrix}.
As we have already mentioned, the components of its right eigenvector $ \vec{y}_\beta  $ corresponding to the
eigenvalue $\beta$ represent distances between the consecutive
$\beta$-integers (see \eqref{eq:distances}), i.e.,
$$\vec{y}_\beta = (\triangle_0,
      \triangle_1 ,
      \triangle_2, \dots , \triangle_{m+p-1})^T.$$
We have now at our disposal all we need to compute the limit equal to $E^*(\ubeta)$.
In order to simplify the notation, let us omit the index $\beta$ in $\vpb$ and $\vec{y}_\beta$.
Let us start with calculation of the relevant limits.

\begin{lem}\
    \begin{itemize}
        \item[(i)]
            $$
                \lim_{n \to \infty}
                \frac{|\varphi^n(1)|}{|\varphi^n(0)|} = \triangle_1 =
                \beta - t_1,
            $$
        \item[(ii)]
            $$
                \lim_{n \to \infty}
                \frac{|z^{(n)}|}{|\varphi^n(0)|} =
                            \begin{cases}
                                \frac{1}{\beta^m -1}(t +
                                \triangle_m) & \text{ if $p$ divides
                                $z$,} \\
                                \frac{1}{\beta^m}(t + \triangle_m)
                                & \text{ otherwise.}
                            \end{cases}
            $$
    \end{itemize}
\end{lem}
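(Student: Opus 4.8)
The plan is to compute both limits using the asymptotic machinery established in Lemma~\ref{lem:limit_the_case_of_letters} and Lemma~\ref{lem:the_constant_C_M}, exploiting the formula $|\varphi^n(w)| = \Psi(w)M^n\vec{e}$ from Lemma~\ref{lem:number_of_letters_in_vp_to_n}. For part $(i)$, I would write
$$
\frac{|\varphi^n(1)|}{|\varphi^n(0)|} = \frac{\Psi(1)M^n\vec{e}}{\Psi(0)M^n\vec{e}} = \frac{\Psi(1)\tfrac{1}{\beta^n}M^n\vec{e}}{\Psi(0)\tfrac{1}{\beta^n}M^n\vec{e}}\,.
$$
By Lemma~\ref{lem:limit_the_case_of_letters}(i) the rescaled matrix $\tfrac{1}{\beta^n}M^n$ converges, and applying Lemma~\ref{lem:the_constant_C_M} gives $\tfrac{1}{\beta^n}M^n\vec{e} \to C_M\vec{y}$. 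Since $\Psi(0)$ and $\Psi(1)$ are just the standard basis row vectors $\vec{e}_0$ and $\vec{e}_1$, the constant $C_M$ cancels in the ratio and the limit equals $\frac{\vec{e}_1\vec{y}}{\vec{e}_0\vec{y}} = \frac{\triangle_1}{\triangle_0}$. Because $\triangle_0 = \sum_{i\geq 1}t_i/\beta^i = 1$ (the R\'enyi expansion of unity) and $\triangle_1 = \sum_{i\geq 1}t_{i+1}/\beta^i = \beta\triangle_0 - t_1 = \beta - t_1$, this yields exactly the claimed value.

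For part $(ii)$, the essential input is the length of $z^{(n)}$, read off from Definition~\ref{dfn:z_n}. The two cases of the limit correspond precisely to the two nontrivial cases in that definition. When $p\nmid z$ we have $z^{(n)} = \varphi^{n-m}(0^t m)$, so $\Psi(z^{(n)}) = \Psi(0^t m)M^{n-m}$ and
$$
\frac{|z^{(n)}|}{|\varphi^n(0)|} = \frac{\Psi(0^t m)M^{n-m}\vec{e}}{\Psi(0)M^n\vec{e}} = \frac{1}{\beta^m}\cdot\frac{\Psi(0^t m)\tfrac{1}{\beta^{n-m}}M^{n-m}\vec{e}}{\Psi(0)\tfrac{1}{\beta^n}M^n\vec{e}}\,.
$$
Both rescaled vectors tend to $C_M\vec{y}$ by the same two lemmas, the constant cancels, and the surviving ratio is $\frac{1}{\beta^m}\cdot\frac{\Psi(0^t m)\vec{y}}{\Psi(0)\vec{y}}$. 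Here $\Psi(0^t m)\vec{y} = t\triangle_0 + \triangle_m = t + \triangle_m$ and $\Psi(0)\vec{y} = \triangle_0 = 1$, giving the second branch. When $p\mid z$, the word $z^{(n)}$ is the concatenation $\varphi^{k}(0^t m)\varphi^{k+m}(0^t m)\cdots\varphi^{(\ell-1)m+k}(0^t m)$, so its Parikh vector is $\Psi(0^t m)\sum_{i=0}^{\ell-1}M^{k+mi}$ with $n = \ell m + k$; this is exactly the geometric-sum shape handled by Lemma~\ref{lem:limit_the_case_of_letters}(ii) with $r = m$ and $s = k$, which produces the extra factor $\frac{1}{\beta^m - 1}$ in place of $\frac{1}{\beta^m}$, yielding the first branch.

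The main obstacle I anticipate is bookkeeping in the divisible case: one must carefully match the index ranges $n = \ell m + k$ so that the sum $\sum_{i=0}^{\ell-1}M^{k+mi}$ really has the normalization $\beta^{s+nr}$ demanded by Lemma~\ref{lem:limit_the_case_of_letters}(ii), and check that the leftover powers of $\beta$ (from $\varphi^n(0)$ in the denominator versus the top exponent $(\ell-1)m+k$ of the sum) combine to give precisely $\frac{1}{\beta^m-1}$ rather than some shifted constant. A secondary subtlety is confirming that $\Psi(0^t m)\vec{y} = t + \triangle_m$ and $\Psi(0)\vec{y} = \triangle_0 = 1$, i.e.\ using the identification of $\vec{y}$ with $(\triangle_0,\dots,\triangle_{m+p-1})^T$ together with $\triangle_0 = 1$; both are immediate once stated but are what make the final closed forms match the theorem. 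Everything else is the routine cancellation of $C_M$, which is legitimate because $C_M = \vec{x}\vec{e} > 0$ by Lemma~\ref{lem:the_constant_C_M}.
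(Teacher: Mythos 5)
Your proposal is correct and follows essentially the same route as the paper's proof: both parts are reduced via $|\varphi^n(w)|=\Psi(w)M^n\vec{e}$ to the normalized limits of Lemma~\ref{lem:limit_the_case_of_letters} and the cancellation of $C_M$ from Lemma~\ref{lem:the_constant_C_M}, with the case split in $(ii)$ matching the two branches of Definition~\ref{dfn:z_n} exactly as in the paper. Your explicit remarks that $\triangle_0=1$ and $\triangle_1=\beta\triangle_0-t_1$ only make visible two small identities the paper uses implicitly.
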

\begin{proof}
    By Lemma~\ref{lem:number_of_letters_in_vp_to_n} we have
    $$
        \frac{|\varphi^n(1)|}{|\varphi^n(0)|} =
        \frac{\frac{1}{\beta^n}\Psi(1)M^n\vec{e}}
        {\frac{1}{\beta^n}\Psi(0)M^n\vec{e}}.
    $$
    Due to Lemmas~\ref{lem:limit_the_case_of_letters}
    and~\ref{lem:the_constant_C_M}, this tends to
    $$
        \frac{C_{M}\Psi(1)\vec{y}}{C_{M}\Psi(0)\vec{y}}
        =  \triangle_1 = \beta - t_1
    $$
    as $n$ goes to infinity.

    We divide the proof of~$(ii)$ into two parts. First assume
    that $p$ does not divide $z$. Then, using the same techniques as
    for~$(i)$, we get
    \begin{multline*}
    \lim_{n \to \infty} \frac{|z^{(n)}|}{|\varphi^n(0)|} =
    \lim_{n \to \infty} \frac{t|\varphi^{n-m}(0)| +
    |\varphi^{n-m}(m)|}{|\varphi^n(0)|} = \\ = \frac{1}{\beta^m}\frac{\left(t\Psi(0) +
    \Psi(m)\right)\vec{y}}{\Psi(0)\vec{y}}=
    \frac{1}{\beta^m}(t + \triangle_m).
    \end{multline*}

    Second, let $p$ divide $z$ and let  $0 \leq k <
    m$. Then by Definition~\ref{dfn:z_n} we have
    $$
        |z^{(n m + k)}| = t\sum_{j = 0}^{n - 1}|\varphi^{k + j m}(0)| + \sum_{j = 0}^{n - 1}|\varphi^{k + j
        m}(m)|.
    $$
    Now, by Lemma~\ref{lem:limit_the_case_of_letters} we get
    \begin{multline*}
    \lim_{n \to \infty} \frac{|z^{(n m + k)}|}{|\varphi^{n m + k}(0)|} =
    \lim_{n \to \infty}\frac{\left(t\Psi(0) +
    \Psi(m)\right) \frac{1}{\beta^{n m + k}}\sum_{i=0}^{n-1}M^{k+mi} \vec e
    }{\Psi(0) \frac{1}{\beta^{n m + k}}M^{mn+k} \vec e
    }=\\\frac{1}{\beta^m - 1}\frac{\left(t\Psi(0) +
    \Psi(m)\right)\vec{y}}{\Psi(0)\vec{y}}=
    \frac{1}{\beta^m - 1}(t + \triangle_m).
    \end{multline*}
    Since the resulting expression does not depend on $k$, the
    proof is finished.
\end{proof}

Using the obtained limit values, we get the statement of Theorem~\ref{thm:main_thm} concerning the ultimate critical exponent:
 $$\E^*(\ubeta)=\begin{cases}
                                \beta + \frac{1}{\beta^m -1}(t +
                                \triangle_m) & \text{ if $p$ divides
                                $z$,} \\
                                \beta + \frac{1}{\beta^m}(t + \triangle_m)
                                & \text{ otherwise.}
                            \end{cases}
                            $$

\section{Comments}
\begin{itemize}

\item
Our method for calculation of the critical exponent and the ultimate critical exponent
can be used for any primitive substitution $\varphi$. It can be shown for any such substitution
that all BS factors arise when applying the map $f$ from Definition~\ref{map_BS} repeatedly on a~finite
number of initial BS factors.

\item The sequences of BS factors we have studied are, in terms of
Lemma~\ref{lem:BS_and_max_powers}, maximal powers of some factor
$w$ minus the prefix $w$. Krieger in~\cite{Krieger2007} considered
directly sequences of maximal powers called $\pi$-sequences. There
is a strong relation between these two sequences, it holds
(omitting some technicalities) that $(v_i)_{i \geq 0}$ is a
sequence of BS factors which are powers of $w_i$ and $(v_i,w_i)
\in \B(\ub)$ if and only if $(w_iv_i)_{i \geq 0}$ is a
$\pi$-sequence. However, she studied the general case where she
needed ``only'' to know that there are only a finite number of
these $\pi$-sequences. The method used in this paper is a feasible
way of how to identify all the $\pi$-sequences for a particular
substitution.

\item In~\cite{Balkova2009} the values of the critical exponent
for quadratic non-simple Parry numbers are studied. In this
special case, the R\'enyi expansion of unity
$d_\beta(1)=t_1t_2^{\omega}$, hence the period length $p=1$ and
$p$ divides then $z$ automatically. In this case, we are able to
decide when $E^*(\ubeta)=E(\ubeta)$~\cite[Theorem
5.3]{Balkova2009}.

\item The exact value of the constant  $C_{M}$ from Lemma
\ref{lem:the_constant_C_M} was not necessary for calculation of
our limits. However, its value is computed in~\cite{Balkova2008b}
in case of canonical substitutions associated with simple and
non-simple Parry numbers:
$$\begin{array}{rl}
    C_{M} = \frac{\beta - 1}{\beta^m(\beta^p - 1)}P'(\beta) & \text{for non-simple Parry numbers},\\
    C_{M} = \frac{\beta - 1}{(\beta^m - 1)}P'(\beta) & \text{for simple Parry numbers},

\end{array}
$$
where $P(x)$ is the Parry polynomial of $\beta$ defined ibidem.
\item An essential part of this paper is devoted to BS factors.
This notion plays an important role in the study of many
characteristics of infinite words, e.g., factor complexity,
palindromic complexity, return words, abelian complexity etc.
\end{itemize}

\section*{Acknowledgement}

We acknowledge financial support by the Czech Science Foundation
grant 201/09/0584 and by the grants MSM6840770039 and LC06002 of
the Ministry of Education, Youth, and Sports of the Czech
Republic. We also thank the CTU student grant
SGS10/085/OHK4/1T/14.

    \bibliographystyle{plain}
    \bibliography{publications}
\end{document}